\documentclass[11pt,reqno]{amsart}
\usepackage[left=33mm,right=33mm,top=30mm,bottom=32mm]{geometry}
\usepackage{mathtools,amssymb,amsthm,mathrsfs,color,float}
\usepackage{paralist}
\usepackage{stackengine}
\usepackage{centernot}
\usepackage{txfonts}
\usepackage{appendix}
\usepackage{soul}
\sethlcolor{yellow}
\usepackage{tikz}
\usepackage{bm}
\usepackage[colorlinks,
linkcolor=red,
anchorcolor=green,
citecolor=blue, 
]{hyperref}

\usepackage[T1]{fontenc}
\usepackage[utf8]{inputenc}

\usepackage{calc}
\definecolor{bleu1}{RGB}{0,57,128}
\def\bleu1{\color{bleu1}}

\usepackage{etoolbox}
\patchcmd{\section}{\normalfont}{\normalfont \bleu1}{}{}
\patchcmd{\subsection}{\normalfont}{\normalfont \bleu1}{}{}
\patchcmd{\subsubsection}{\normalfont}{\normalfont \bleu1}{}{}

\newtheorem{The}{\bleu1 Theorem}[section]
\newtheorem{Pro}[The]{\bleu1 Proposition}
\newtheorem{Lem}[The]{\bleu1 Lemma}

\newtheorem{Con}[The]{\bleu1 Conjecture}
\theoremstyle{definition}
\newtheorem{defn}[The]{\bleu1 Definition}

\newtheorem{Rem}[The]{\bleu1 Remark}

\numberwithin{equation}{section}

\newcommand{\T}{\mathbb{T}}
\newcommand{\R}{\mathbb{R}}
\newcommand{\Z}{\mathbb{Z}}
\newcommand{\N}{\mathbb{N}}

\newcommand{\Q}{\mathbb{Q}}

\newcommand{\Lip}{\mathrm{Lip\,}}

\newcommand{\Var}{\mathrm{Var}\,}
\newcommand{\Cov}{\mathrm{Cov}\,}
\newcommand{\supp}{\mathrm{supp\,}}

\makeatletter
\@namedef{subjclassname@2020}{\textup{2020} Mathematics Subject Classification}
\makeatother

\title[On Aubry's completeness conjecture]{On Aubry's completeness conjecture}
\author{ Tianqi Shi \and Jinxin Xue}
\address[Tianqi Shi]{School of Mathematics and Statistics, Northeast Normal University, Changchun 130024, China}
\email{tqshi.math@gmail.com}
\address[Jinxin Xue]{New Cornerstone Sciences Laboratary at Department of Mathematical Sciences, Tsinghua University, Beijing 100084, China}
\email{jxue@tsinghua.edu.cn}
\keywords{Aubry's completeness conjecture, devil's staircase, Frankel-Kontorova model, twist map, hyperbolicity, KAM torus}
\subjclass[2020]{37J40,37J51}
\begin{document}
\maketitle
\begin{abstract}
In this paper, we prove Aubry's completeness stating conjecture that for a twist map the graph of rotation numbers as a function of the cohomology classes is a purely singularly continuous function (called complete devil's  staircase by Aubry) when the set of all minimal configurations is uniformly hyperbolic. Such a phenomenon is crucial for characterizing the chain of atoms being an insulator for the Frenkel-Kontorova model, and can be considered as the analogue of the phase locking phenomenon in critical circle maps as well as the fractional quantum Hall effect. In contrast, in the presence of a positive measure set of KAM tori, we prove that the devil's staircase is incomplete. 
\end{abstract}
\tableofcontents
\section{Introduction}
The Aubry-Mather theory on twist maps is widely considered to be the most important development in Hamiltonian dynamics after KAM. An important prototypical model studied in the theory is the \textit{standard map} of the form $$\phi: \T\times \R\to \T\times \R,\quad (x,y)\mapsto(x+y+k \sin x,y+k \sin x).$$ 
 The standard map is in general very chaotic, defying analysis. When $k$ is small, the theory of Kolmogorov-Arnold-Moser gives a Cantor set of Diophantine numbers such that for each point $\rho$ in the set, there is an invariant circle restricted to which the dynamics is conjugate to a circle rotation by $\rho.$ However, the KAM theory says nothing in the complement of the set of KAM invariant circles where the dynamics is very chaotic. 
The approach of Aubry-Mather theory was to utilize the variational principle to obtain a family of minimal configurations, which can  be classified, and we can extract an ordered structure from the chaos by the energy minimization procedure. Compared to the KAM theory, the Aubry-Mather theory is nonperturbative. Moreover, it has a global nature lying in the fact that we can parameterize all the minimal configurations by a real parameter, and the dynamical information of the minimal configurations is encoded in the structure of a function called  the \textit{$\alpha$-function} of the real parameter. The main theme of the paper is to further explore how the structure of $\alpha$ is related to the phase space dynamics.

\subsection{Frenkel-Kontorova model, devil's  staircase and Aubry's completeness conjecture}
To motivate Aubry's completeness conjecture, we first give a brief overview of Aubry's work on this topic. Readers are recommended the paper \cite{aubry1978new} of Aubry in 1978, which set the stage for his series of works later. 

\subsubsection{Frenkel-Kontorova model and minimal configurations}\
Aubry focused on the \emph{Frenkel-Kontorova model} (FK model) in solid state physics 
which describes the positions of atoms in solids with nearest neighbor interaction with energy of the form
 \index{Frenkel-Kontorova model}
 \begin{equation}\label{EqFK}
h(x_n,x_{n+1})=\frac{1}{2}(x_n-x_{n+1})^2-k\cos (2\pi x_n), 
 \end{equation}
where the term $(x_n-x_{n+1})^2$ is similar to the elastic potential for springs and the term $k\cos(2\pi x_n)$ is the periodic background potential.       A configuration is a biinfinite sequence $(\ldots,x_{-1}, x_0,x_1,\ldots)$. When the atoms are at a stationary configuration, we should have the Euler-Lagrange equation 
\begin{equation}\label{EqELTwistMap}
\partial_2h(x_{i-1},x_i)+\partial_1h(x_i,x_{i+1})=0.    
\end{equation}
 Moreover, in experiments, we only see stable stationary configurations, since unstable ones are easily perturbed away. It is then natural to look for stable configurations by minimizing the total energy.  In general, if we consider the total energy $\sum_{\ell=-\infty}^\infty h(x_\ell,x_{\ell+1})$ of a configuration, it is most likely to be infinity.  However, we can still talk about minimality in terms of minimality for every finite segment. 
    \begin{defn}\index{minimal configurations}
	A configuration is called \emph{minimal},  if the following holds for any integers $i<j$
	$$
	\sum_{\ell=i}^{j-1}h(x_\ell,x_{\ell+1})\leqslant\inf_{\xi_i=x_i,\xi_j=x_j} \sum_{\ell=i}^{j-1}h(\xi_\ell,\xi_{\ell+1}).
	$$
\end{defn}

In the FK model, the  Euler-Lagrange equation gives 
 \begin{equation}\label{EqELFK}
(x_{n+1}-2x_n+x_{n-1})-2\pi k\sin (2\pi x_n)=0.     
 \end{equation}
 Defining $y_n=\partial_{2}h(x_{n-1},x_{n})= x_{n}-x_{n-1}$, we get a 2D map 
$$\left\{
\begin{array}{l}
	x_{n+1}=x_n+y_n+2\pi k\sin(2\pi x_n)\\
    y_{n+1}=y_n+2\pi k\sin(2\pi x_n).\\
\end{array}
\right.$$
This is in the form of the standard map.

\subsubsection{The devil's  staircase}
    Aubry in 1983 (\cite{MR719055}) introduced the \textit{mean energy per atom} along any minimal configuration with the prescribed rotation number,
$$
\beta(\varrho)=\lim_{k\to\infty}\frac 1{2k+1}\sum_{i=-k}^{k-1}h(x_i,x_{i+1}),\ \forall \ (\cdots,x_i,x_{i+1},\cdots)\in\mathscr Q_\varrho.
$$
Here $\mathscr Q_\varrho$ is the set of minimal configurations with rotation number $\varrho$. Aubry first proved the strict convexity of $\beta$ in \cite{MR719055}. This is essentially reflecting the fact that minimal configurations with different rotation numbers necessarily intersect. 

 In the paper \cite{MR719055}, Aubry discovered the following remarkable devil's  staircase phenomenon encoding further dynamical information of  the set of minimal configurations for all rotation numbers. 
\begin{The}\label{ThmDevil}
The $\beta$-function is differentiable at each irrational point and generically nondifferentiable at each rational point.    
\end{The}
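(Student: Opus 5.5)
We prove the two assertions of Theorem~\ref{ThmDevil} through the Fenchel duality between $\beta$ and its conjugate $\alpha(c)=\sup_\varrho\,(c\varrho-\beta(\varrho))$. Since $\beta$ is strictly convex, it has one-sided derivatives $\beta'_\pm(\varrho)$ at every point. Moreover $\partial\beta(\varrho)=[\beta'_-(\varrho),\beta'_+(\varrho)]$ is exactly the set of cohomology classes $c$ for which the $c$-minimal (Mather) measures are supported on configurations of rotation number $\varrho$. So it suffices to show two things: $\beta'_-=\beta'_+$ at irrational $\varrho$, and $\beta'_-<\beta'_+$ at rational $\varrho$ for generic $h$.

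\textbf{Irrational $\varrho$.} Recall Aubry's theory: minimal configurations of irrational rotation number are totally ordered, and the recurrent ones are carried by a unique Aubry--Mather set $M_\varrho$ (a circle or a Cantor set). Its invariant measure is uniquely ergodic because the dynamics is semiconjugate to the rotation by $\varrho$. Thus there is a unique minimizing measure $\mu_\varrho$ of rotation number $\varrho$.

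We use the standard formula, obtained by differentiating $\sum h(x_i,x_{i+1})$ along a family of minimal configurations $x^{\varrho'}\to x^\varrho$:
\[
\beta'_\pm(\varrho)=\lim_{\varrho'\to\varrho^\pm}\frac{\beta(\varrho')-\beta(\varrho)}{\varrho'-\varrho}=-\int \partial_1 h\,d\mu_\varrho ,
\]
that is, minus the mean ``momentum'' $y$. Its value does not depend on the side, because both one-sided limits are realized by measures that converge weakly to $\mu_\varrho$. The key inputs are upper semicontinuity of the Mather set in $\varrho$ and uniqueness of $\mu_\varrho$.

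An alternative route avoids differentiation. Suppose instead that $c_1<c_2$ both lie in $\partial\beta(\varrho)$. Then $\alpha$ would be affine on $[c_1,c_2]$, and every $c$ in this interval would share the same minimizing measure $\mu_\varrho$. But $\alpha(c)=-\int (h-c(x_1-x_0))\,d\mu$ is then strictly affine in $c$ with slope $\varrho$. Combined with the uniqueness of $\mu_\varrho$, this is consistent; the contradiction must instead come from the monotonicity of the momentum $\int y\,d\mu$, which must take the single value of $c$. I expect making this precise to be the main obstacle. The plan is to show that the average of $y=\partial_2h$ over $\mu_\varrho$ equals $c$ for every $c$ in the subdifferential, which forces $c_1=c_2$. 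This identity comes from the first-order condition of the minimization of $h-c(x_1-x_0)$ under translation of the whole configuration, combined with uniqueness of $\mu_\varrho$.

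\textbf{Rational $\varrho=p/q$.} Here $\beta'_+(p/q)-\beta'_-(p/q)$ equals the Peierls barrier height (Aubry's ``gap'') of the $(p,q)$ periodic minimizers. Equivalently, it is the energy difference between minimal heteroclinic configurations and the periodic ground states, measured per unit rotation. It vanishes exactly when the minimal $(p,q)$-periodic configurations foliate a full invariant circle.

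For a generic $h$ (open and dense, e.g.\ in $C^2$), the minimal periodic orbit of type $(p,q)$ is unique and nondegenerate. The map $h\mapsto \sum_{i<q}h(x_i,x_{i+1})$ has an isolated nondegenerate minimum on $(p,q)$-periodic configurations, achieved by a small perturbation $h+\varepsilon\,\phi(x_0)$ with a bump function $\phi$. The other minimal orbits are then only heteroclinic connections, so no invariant circle of rotation number $p/q$ exists. Then the one-sided limits differ by the positive amount $\inf$ over heteroclinic minimizers of $\sum_i\bigl(h(x_i,x_{i+1})-h(\bar x_i,\bar x_{i+1})\bigr)$, which is strictly positive by nondegeneracy. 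Countable intersection over all $p/q$ gives genericity.
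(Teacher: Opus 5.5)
The paper does not prove this statement; it quotes it from Mather and Bangert \cite{MR967638,MR1384394}. Judged on its own, your proposal has a genuine gap in the irrational case. The rational case rests on an identity that you assert rather than derive.

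\textbf{Irrational case.} The formula $\beta'_\pm(\varrho)=-\int\partial_1h\,d\mu_\varrho$ is false, and so is the identity $\int y\,d\mu_\varrho=c$ for all $c\in\partial\beta(\varrho)$ in your second route. Take the Frenkel--Kontorova function \eqref{EqFK}. Averaging \eqref{EqELFK} over an invariant measure kills the sine term, so $-\int\partial_1h\,d\mu_\varrho=\int(x_1-x_0)\,d\mu_\varrho=\varrho$ for every $\varrho$. Your formula would then force $\beta(\varrho)=\beta(0)+\varrho^2/2$. But for $k>0$ one has $\beta(0)=-k$, and $\beta(1/2)>\frac18-k=\beta(0)+\frac18$ by strict Jensen, since no invariant measure has both $x_1-x_0\equiv\frac12$ and $x_0\in\Z$ a.e.

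The underlying problem is that your justification uses only uniqueness of $\mu_\varrho$ and weak convergence of minimizers as $\varrho'\to\varrho^\pm$. Exactly the same holds at $p/q$ whenever the periodic minimizer is unique, which is the generic situation in which $\beta'_-(p/q)<\beta'_+(p/q)$. One-sided derivatives are not weakly continuous in the measure, so no argument that only sees $\mu_\varrho$ can separate irrationals from rationals. The translation variation, $\int(\partial_1h+\partial_2h)\,d\mu=0$, does not even involve $c$.

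What is missing is that $c$ is a flux: for an invariant circle $p=P(x)$, $c=\int_0^1P(x)\,dx$, integrated against $dx$ rather than $\mu_\varrho$. So the argument has to exploit the gaps of the Mather set.
\begin{itemize}
\item Suppose $c_1<c_2$ both lie in $\partial\beta(\varrho)$. Both then have Mather set $M=\mathrm{supp}\,\mu_\varrho$.
\item Take $c_i$-weak KAM solutions $u_i$, in the spirit of Bangert's Busemann functions. They are differentiable on $\pi M$, with $p=c_i+\partial_xu_i$ there.
\item The Lipschitz function $F=u_2-u_1+(c_2-c_1)\tilde x$ therefore satisfies $F(\tilde x+1)-F(\tilde x)=c_2-c_1$ and $F'=0$ on $\pi M$.
\item Because $\varrho$ is irrational, the iterates of a gap $(a,b)$ of $\pi M$ are pairwise disjoint. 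Their lengths tend to $0$, so the two boundary orbits are asymptotic in both time directions.
\item Calibration then gives $u_i(b)-u_i(a)+c_i(b-a)=\lim_{n\to\infty}\int_{-n}^0\bigl(L(d\gamma_b)-L(d\gamma_a)\bigr)\,dt$, which does not depend on $i$. Hence $F(a)=F(b)$.
\item Summing, $c_2-c_1=\int_{\pi M}F'\,dx+\sum_{\text{gaps}}\bigl(F(b)-F(a)\bigr)=0$, a contradiction.
\end{itemize}

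\textbf{Rational case.} Your reduction to the absence of an invariant circle of $(p,q)$-minimizers is fine, as is the open-dense, countable-intersection genericity argument. The problem is the claim that $\beta'_+(p/q)-\beta'_-(p/q)$ equals a positive barrier quantity, which is only asserted.
\begin{itemize}
\item As written, an infimum over heteroclinics of $\sum_i\bigl(h(x_i,x_{i+1})-h(\bar x_i,\bar x_{i+1})\bigr)$, it is not well defined. A heteroclinic joins $\bar x$ to a different translate of it, so the termwise comparison with $\bar x$ does not converge.
\item Nondegeneracy only guarantees that gaps exist; it does not give positivity.
\end{itemize}
The same bookkeeping as above supplies what is needed. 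Now the gap endpoints lie on lifted periodic orbits that stay a fixed distance apart. Take $u^\pm$ to be $c^\pm$-weak KAM solutions, calibrating the forward heteroclinics (for $c^+$) and the backward heteroclinics (for $c^-$). This yields $c^+-c^-=\sum_{\text{gaps}}(\Delta^{+}+\Delta^{-})$, the sum over gaps of the forward and backward heteroclinic action excesses. Each term is positive by an Aubry--Le Daeron crossing argument. That identity is the step you would need to supply.
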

 The result was discovered by Aubry using numerics who formulated it into a conjecture. The mathematical proof of the result was given in \cite{MR967638,MR1384394}. The result can be reformulated in terms of the Legendre dual of the $\beta$-function called the $\alpha$-function. The notion was also introduced by Aubry in \cite{MR719055} who considered adding a chemical potential to the FK model. 
\begin{defn}
    The \textit{$\alpha$-function} is defined to be the Legendre transformation of $\beta$, i.e. the $\alpha$-function takes its value at $c$ such that $\alpha(c)+\beta(\varrho)=c\varrho$, or equivalently,
$$
\alpha(c)=-\liminf_{K,K'\to\infty}\inf_{\mathbf{x}\in\mathbb{R}^{K'+K}}
\frac{1}{K+K'}\Big(\sum_{i=-K}^{K'-1}h(x_i,x_{i+1})-c(x_{K'}-x_{-K})\Big).
$$
\end{defn}

The last theorem is translated to the statement that the graph of $D\alpha$ is a \textit{devil's staircase}, where by devil's staircase we mean that the graph of $D\alpha$ is monotonically increasing and has infinitely many horizontal pieces. Indeed, since $\alpha$ and $\beta$ are mutual Legendre transformations of each other, the convexity of $\beta$ implies also the convexity of $\alpha$, which further implies that $D\alpha$ is monotonically increasing.   the strict convexity of $\beta$ translates to the continuity of $D\alpha. $ A nondifferentiable point $\varrho$ of $\beta$ corresponds to an interval $[c^-,c^+]$ such that $D\alpha=\varrho$ and $D^2\alpha=0$ when restricted to $(c^-,c^+)$. Thus, generically, the graph of $D\alpha$ has infinitely many horizontal pieces, i.e., a devil's staircase.

\subsubsection{Transition by breaking of analyticity and the completeness  conjecture}
There is an important theme in Aubry's works that seems not well understood or appreciated by mathematicians that is the {\it transition by breaking of analyticity}. The physics of this transition is important. On an invariant curve, the atoms can slide without causing extra energy, thus the chain of atom is a metal. On the other hand, when the invariant curve is broken,  some energy (called the \textit{Peierls-Nabarro barrier}) has to be costed to shift the minimal configuration to a nearby one, in which case, the atoms are locked and the material is an insulator. When we increase the constant $k$ in the standard map from zero, for a Diophantine number $\rho$, the corresponding invariant circle persists for $k$ in a certain range $[0,k_\rho]$. When $k$ further increases beyond $k_\rho$, the invariant circle breaks and a Cantor set shows up. This is a transition from metal to insulator called the transition by breaking of analyticity by Aubry. 

In physics, it is important to know if the devil's staircase is complete or not. We say a function $f:\ [a,b]\to \R$ is a \textit{complete} devil's staircase if $f$ is a purely singularly continuous function, i.e. $f$ is continuous, nonconstant, and its derivative is vanishing almost everywhere.   A complete devil's staircase means zero probability for incommensurate phases. As a commensurate phase means that the atoms are locked, thus the material is an insulator, a complete devil's staircase means that the material is an insulator with probability 1. This is called global hysteresis by Aubry. 



An important conjecture left unsolved in the work of Aubry is the following conjecture (\cite{MR1219348}). Considering Aubry's series of works starting from \cite{aubry1978new}, it is clear that the conjecture is one of the central concerns in his theory. 

\begin{Con}Let $\Lambda_{[c_1,c_2]}$ be the set of orbits of a twist map corresponding to minimal configurations with  $c\in [c_1,c_2]$. If $\Lambda_{[c_1,c_2]}$ is uniformly hyperbolic, then the graph of $D\alpha:\ [c_1,c_2]\to \R$ is purely singularly continuous.  
\end{Con}


Aubry proved the conjecture for the FK model when $k$ is greater than a precisely given constant (\cite{MR1219348}). Aubry also gave several explicitly computed models exhibiting complete devil's staircases (\cite{Aubry_1983exact,aubry1983complete}).

The hyperbolicity assumption is indeed a rather sharp condition in the sense that it is not only sufficient but also necessary. It was proved by Aubry, MacKay and Baesens  that for minimal configurations, the uniform hyperbolicity is equivalent to the nondegeneracy of the second variation of the action functional. The latter further implies that some nonvanishing energy has to be paid to shift a minimal configuration to a nearby one (positive Peierls-Nabarro barrier), in which case the material is an insulator. 

The mathematical interest of the conjecture is self-evident, since the global functions $\alpha$ and $\beta$ encode the information of the phase space dynamics. Whereas the strict convexity of $\beta$ reflects  the fact that minimal configurations with different rotation number intersects, the nondifferentiability of $\beta$ at rational points reflects the gaps between nearby minimal configurations, Theorem \ref{ThmDevil} means that the hyperbolicity of the minimal configurations is encoded in the completeness of the devil's staircase. Whereas the strict convexity of $\beta$ involves the information of the graph of $\beta$ and Theorem \ref{ThmDevil} involves the differentiability of $\beta$, the Aubry's completeness conjecture involves finer information of $\beta$ concerning second order derivatives. 

\subsubsection{Complete devil's staircase in physics and mathematics}
The completeness of devil's staircase turns out to be an important theme in condensed matter physics. \textit{Fractional quantum Hall effect} (FQHE) is a phenomenon discovered in 1982 by Tsui, St\"ormer and Gossard  in certain strongly correlated quantum systems that the quantum number could take on rational fractional values. In the paper \cite{rotondo2016devil}, the authors considered the thin-torus limit of the Hamiltonian of interacting electrons in a strong magnetic field and revealed the devil's staircase structure of the inverse filling fraction versus the strength of magnetic field. The model studied by Aubry was the zero-temperature limit of this model. This is an active field of researches in physics. We refer readers to \cite{MR668663,MR675056,ishimura1985devil,abe2024large,ruzzi2020hidden} for other related researches.  It is important to mention that there is a remarkable parallel between the Aubry's theory on twist maps and the Hubbard model in cold atom physics (\cite{MR710121}). In both settings, the rational ground states are classified and there is a devil's staircase structure. It would be very interesting to further explore the analogue. 

For circle maps, there is a similar phenomenon called \textit{phase locking}. Consider circle map of the form $x\mapsto x+\alpha+\frac{\epsilon}{2\pi}\sin(2\pi x)$. When $\epsilon=1$, treating the rotation number $\rho$ as a function of $\alpha$, we get that for full measure set of $\alpha$, the rotation number is rational. The graph of $\rho$ as a function of $\alpha$ is a complete devil's staircase (see \cite{MR1374412}). The phase locking is caused by the existence of a critical point of the map. The mechanism for the devil's staircase in this case is rather different from that of our case. 

\subsection{The main theorems}
In this paper, we confirm the conjecture for generic Tonelli Lagrangian systems. According to Moser's theorem (\cite{MR863203}, see also Theorem \ref{thm:moser}), a twist map can be equivalently written as the time-1 map of a Tonelli Lagrangian system. So for convenience of notations, we work in the framework of Tonelli Lagrangian systems. 
To set up the problem, we consider a Tonelli Lagrangian $L:\T\times T\T\to \R$ that is a $C^2$ function $L(t,x,v)$ that is strictly convex and superlinear $(\frac{L}{|v|}\to\infty$ as $|v|\to\infty$) in component $v$ and has complete Euler-Lagrangian flows (see Section \ref{subsec:cvham}). 

We introduce the $A(\mu):=\int Ld\mu$ for an invariant probability measure $\mu$ defined on $\T\times T\T$. Let $\eta$ be a closed 1-form on $\T^1$ with cohomology class $c\in H^1(\T,\R)$. We introduce the \textit{$\alpha$-function} 
$$\alpha(c):=-\inf_\mu\int_{\T\times T\T} L-\eta d\mu$$
as well as the Mather set $\tilde{\mathscr M}(c)$ to be the set of support of measures attaining the above infimum. 

The main theorems are as follows. 
\begin{The}\label{ThmMain}Let $\mathcal H$ be the set of Tonelli Lagrangians satisfying that the set 
\begin{align*}
	\Lambda_{[c_1,c_2]}:=\bigcup_{c\in [c_1,c_2]}\tilde{\mathscr M}(c)
\end{align*}
is uniformly hyperbolic. Then the graph of $D\alpha:\ [c_1,c_2]\to \R$ is purely singularly continuous.   
   \end{The}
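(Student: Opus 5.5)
The plan is to show that the set $E:=\{c\in[c_1,c_2]:\ D\alpha(c)\notin\Q\}$ has Lebesgue measure zero, and that $D\alpha$ is locally constant on the rational plateaus. Recall that $D\alpha$ is continuous and monotone, because $\beta$ is strictly convex. On each maximal interval $[c^-_{p/q},c^+_{p/q}]$ where $D\alpha=p/q$, the function is constant. So $D\alpha$ is purely singularly continuous on $[c_1,c_2]$ if and only if the union of these plateaus has full measure, i.e.\ $|E|=0$, and $D\alpha$ is nonconstant unless $c_1=c_2$. The case where $D\alpha$ lies in a single plateau is trivial. Therefore the whole task is the measure estimate for $E$.

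First I would quantify the plateau widths using hyperbolicity. For $p/q$ with $D\alpha^{-1}(p/q)\cap[c_1,c_2]\neq\emptyset$, consider the minimal periodic orbits of type $(p,q)$. By uniform hyperbolicity of $\Lambda_{[c_1,c_2]}$, they are hyperbolic with uniform constants $C,\lambda>1$. The width of the plateau is
\[
c^+_{p/q}-c^-_{p/q}=\inf\Big\{\textstyle\sum h \text{ over heteroclinic minimizers}\Big\},
\]
that is, the Peierls barrier integrated across the gap. Concretely, it is the sum of the actions of the two minimal heteroclinic connections (``left'' and ``right'') between neighbouring periodic minimizers, relative to the periodic action. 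The Aubry--MacKay--Baesens nondegeneracy, i.e.\ a uniform lower bound on the second variation, gives a lower bound
\[
c^+_{p/q}-c^-_{p/q}\ \ge\ \delta\, e^{-\kappa q}
\]
for constants $\delta,\kappa>0$ independent of $p/q$. This alone is too weak, since $\sum_q q e^{-\kappa q}$ converges and could be small. So instead I would bound the complement directly.

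Second, the key covering argument works as follows. Fix a level $n$ and consider the Farey intervals. For $c\in E$ with $\rho=D\alpha(c)$ irrational, the Mather set $\tilde{\mathscr M}(c)$ is a hyperbolic Cantor set (Denjoy type, no invariant circle). Its ``gaps'' correspond to orbits approximating the rational minimizers. The claim to prove is that the $c$-interval between two consecutive plateaus of level $n$, i.e.\ Farey neighbours $p/q$ and $p'/q'$, is exponentially small compared with the sum of the new plateaus created at level $n+1$. Equivalently, I want the measure of $\{c: D\alpha(c)\in(p/q,p'/q')\}\setminus\bigcup(\text{plateaus of denominators}\le N)$ to decay to $0$ as $N\to\infty$. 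To get this, I would compare, using hyperbolicity and shadowing, the $\alpha$-derivative $D^2\alpha$ in a distributional sense. The $c$-length of the interval with rotation numbers in $(p/q,p'/q')$ is comparable to the action difference $\Delta(p/q,p'/q')$ of the corresponding ``Birkhoff'' orbits. Hyperbolicity then gives that a fixed fraction $\theta\in(0,1)$ of this length is taken by the plateau of the mediant $(p+p')/(q+q')$. This fraction bound is the uniform-ratio statement: a plateau of $p''/q''$ has width comparable to the gap between its neighbours, with comparability constants coming from the bounded distortion on the hyperbolic set. Iterating gives that the uncovered measure after $n$ Farey levels is at most $(1-\theta)^n|c_2-c_1|\to 0$, hence $|E|=0$.

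The main obstacle is the bounded-distortion ratio estimate in the second step. It requires showing that the plateau width of the mediant and the length of the surrounding $c$-interval are both governed by the same exponentially small quantity, essentially the splitting and homoclinic action of the hyperbolic Mather set, with ratios uniformly bounded. I would try first to express both via a symbolic coding of $\Lambda_{[c_1,c_2]}$ by a subshift, where rotation-number intervals become cylinder sets. Then I would use a Gibbs-type (bounded distortion) property of the action, as a Hölder potential, to compare cylinder weights.
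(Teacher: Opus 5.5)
Your reduction is sound. $D\alpha$ is continuous, monotone and constant on each plateau $D\alpha^{-1}(p/q)$, so it suffices to show that $E=\{c\in[c_1,c_2]: D\alpha(c)\notin\Q\}$ is Lebesgue-null. (Only this implication holds in general, and only it is needed.) Your Farey iteration would then give $|E|=0$ \emph{if} the mediant plateau always occupied a fixed fraction $\theta$ of the $c$-gap $(c^+_{p/q},c^-_{p'/q'})$ between Farey neighbours.

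\textbf{The gap: the uniform fraction bound is unproved, and it carries the whole theorem.} It is a two-sided, bounded-geometry statement. You would need a \emph{lower} bound on the mediant plateau width and an \emph{upper} bound on the $c$-gap length that agree up to a bounded factor, uniformly over all Farey pairs. That means exactly matching exponential rates in $q+q'$. Bounds derived only from the hyperbolicity constants ($C_0,\lambda$, exponential convergence of heteroclinics) have non-matching rates on the two sides, so their quotient degenerates as $q+q'\to\infty$. At deep Farey levels there is also no ``well-separated kinks'' regime from which sharp asymptotics could be extracted. The auxiliary claims you rely on are likewise only asserted: the lower bound $\delta e^{-\kappa q}$, and the comparability of the $c$-length with an action difference $\Delta(p/q,p'/q')$.

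The proposed remedy, a symbolic coding of $\Lambda_{[c_1,c_2]}$ with a Gibbs property, is not available. $\Lambda_{[c_1,c_2]}$ need not be locally maximal. Minimal orbits with rotation numbers in a Farey interval have no reason to form cylinder sets. Nothing identifies plateau widths with Gibbs weights of cylinders. As the paper remarks, such a coding was available to Aubry only near the anti-integrable limit.

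\textbf{The missing idea: a one-sided upper bound on $\beta$ suffices.} No lower bounds or ratio estimates are needed, and upper bounds are cheap because $\beta$ is a minimum over curves. The paper concatenates length-$2T$ segments of the heteroclinics between neighbouring $p/q$-minimizers, with exponentially small corrections at the junctions. This gives a closed curve of rotation number $p/q+\delta$, where $\delta=1/(2q\#(\mathscr M_{p/q})T)$. Its action is computed with $L-c^+v+\alpha(c^+)$, $c^+=\beta_+'(p/q)$. The segment actions sum to $O(q\,e^{-\lambda T})$, because the full heteroclinics are $c^+$-static and the cyclic chain of Ma\~n\'e potentials sums to zero. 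Expansivity gives the uniform bound $\#(\mathscr M_{p/q})\le N$. Together these yield
$\beta(p/q+\delta)-\beta(p/q)-\beta_+'(p/q)\delta\le Cq\delta e^{-\lambda/(4Nq\delta)}$.

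\textbf{Covering the irrationals without the Farey tree.} Cover the irrationals by the intervals $(p/q,p/q+sq^{-1-\nu}]$ with $q>Q$, using Dirichlet approximants from below. This gives $|E|\le\sum_{q>Q}\sum_{p}\bigl[\beta_+'(p/q+sq^{-1-\nu})-\beta_+'(p/q)\bigr]$ for every $s\in(0,1]$. Averaging in $s$ bounds each term by $q^{1+\nu}$ times the flatness defect at $q^{-1-\nu}$, i.e.\ by $Cq\,e^{-\lambda q^{\nu}/(4N)}$. The series $\sum_q Cq^2e^{-\lambda q^{\nu}/(4N)}$ converges, so letting $Q\to\infty$ gives $|E|=0$.
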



\begin{The}\label{thm:hausdorff-dim}
    Under  the assumption of the last theorem, the set of $c$ such that $D^2\alpha$ does not exist has zero Hausdorff dimension. 
\end{The}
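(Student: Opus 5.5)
We want to show that the set $E\subset[c_1,c_2]$ of cohomology classes at which $D^2\alpha$ fails to exist has Hausdorff dimension zero. The plan is to show that $E$ is contained in the set $\Sigma$ of $c$ with irrational rotation number $D\alpha(c)$, or more precisely in the set of endpoints of flat pieces together with the irrational levels. Then we cover $\Sigma$ efficiently by the complements of the flat intervals (the ``gaps'') produced by hyperbolicity.

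\emph{Step 1: where $D^2\alpha$ exists.} For each rational $p/q$ in the range of $D\alpha$ on $[c_1,c_2]$, the preimage $(D\alpha)^{-1}(p/q)=[c^-_{p/q},c^+_{p/q}]$ is a nondegenerate interval. Indeed, uniform hyperbolicity implies that the Mather set at rotation number $p/q$ consists of hyperbolic periodic orbits and heteroclinics with a gap (positive Peierls barrier), and this forces $c^-<c^+$. On the interior of such an interval $D^2\alpha=0$. So $E$ is contained in the complement $K$ of the union of the open flat intervals, which is a closed set. I would rather show that $K$ itself has Hausdorff dimension zero; this also yields the measure-zero statement of Theorem~\ref{ThmMain}.

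\emph{Step 2: quantitative gap estimate (the main obstacle).} I would aim to prove that the width of the flat piece satisfies
\[
c^+_{p/q}-c^-_{p/q}\ \ge\ C^{-1}e^{-\lambda q}
\]
for constants $C,\lambda$ depending only on the hyperbolicity data. This is probably false as stated, so I would also seek the complementary control: the set of $c$ whose rotation number lies in a Farey interval of level $q$ has length at most $e^{-\lambda' q}$, with $\lambda'$ large relative to the growth of the number of such intervals. The mechanism I would try first is the following. The width $c^+-c^-$ equals the Peierls barrier $\Delta W_{p/q}$ of the minimal heteroclinic ``defects'' divided by a normalizing constant. Meanwhile, the $c$-interval corresponding to rotation numbers between two Farey neighbours is controlled by the action difference of the minimal configurations. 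By uniform hyperbolicity (shadowing plus exponential dichotomy), a minimal configuration of rotation number $\rho$ in the Farey cell of $p/q$ at depth $n$ of the continued fraction tree shadows the $p/q$ periodic orbit for long stretches. Its action then differs from combinations of the parent actions by quantities that are exponentially small in $q$. Hence the total $c$-length of the complement of the flat pieces inside the level-$n$ Farey cell shrinks geometrically: it is at most $\theta$ times the parent length, with $\theta<1$ uniform, and in fact it is at most $e^{-\lambda q}$ times the parent length.

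\emph{Step 3: covering argument.} At level $n$ of the Stern--Brocot tree, $K$ is covered by the closures of the gaps between consecutive flat pieces of rationals of denominator at most $Q_n$. Using Step 2, I would bound the lengths $\ell_i$ of these covering intervals. The goal is to get, for every $s>0$, $\sum_i \ell_i^s\to0$. The number of level-$n$ intervals is at most $Q_n^2$, while each has length at most $e^{-\lambda Q_n}$ (or super-exponentially small in the depth). This gives
\[
\sum_i\ell_i^s\ \le\ Q_n^2e^{-s\lambda Q_n}\ \longrightarrow\ 0,
\]
hence $\dim_H K=0$ and so $\dim_H E=0$. The crux is Step 2: converting uniform hyperbolicity into an exponential-in-period lower bound for the flat piece relative to its parent cell. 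Here I would use the Aubry--MacKay--Baesens nondegeneracy of the second variation to get a uniform lower bound on the Peierls barrier per period, and the exponential dichotomy to get upper bounds on action differences of shadowing configurations.
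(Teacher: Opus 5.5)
Your Steps 1 and 3 have the right shape. The paper also reduces to showing that the complement $K$ of the open flat pieces $(\beta'_-(p/q),\beta'_+(p/q))$ has Hausdorff dimension zero, and it covers $K$ by $c$-preimages of short rotation intervals adjacent to rationals. But Step 2, which you rightly call the crux, is missing, and your sketch would not supply it.

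The lower bound $c^+-c^-\geqslant C^{-1}e^{-\lambda q}$, and the Aubry--MacKay--Baesens input meant to produce it, is beside the point. Lower bounds on flat pieces say nothing about how small their complement is. What you need is an \emph{upper} bound, uniform in $p/q$, on the $c$-length of the $D\alpha$-preimage of each cell of the cover. Your chain ``minimizers of rotation number $\rho$ shadow the $p/q$ orbits, so actions differ by $e^{-\lambda q}$, hence the $c$-length shrinks by $e^{-\lambda q}$'' has no argument behind the ``hence''. The scale is also wrong: for $\rho=p/q+\delta$ the gain must be exponential in $1/(q\delta)$, which for a Farey cell $(p/q,p'/q')$ with $q<q'$ is $q'$, not $q$. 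Finally, the uniformity you need requires a uniform bound $\#\mathscr M_{p/q}<N$ (Proposition \ref{pro:uni-num-bounded}), which you do not address.

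The missing ingredient is the exponential flatness estimate of Theorem \ref{thm:exp-flat-all}, $\beta(p/q+\delta)-\beta(p/q)-\beta'_+(p/q)\delta\leqslant Cq\delta e^{-\lambda/(4Nq\delta)}$. It needs no information about actual minimizers at rotation number $p/q+\delta$; instead one builds a competitor loop:
\begin{itemize}
\item cut the heteroclinics $\tilde\xi^k_{p/q,j}\subset\tilde{\mathscr A}(c^+)$ to windows of length $2T$, with $T=1/(2qN\delta)$;
\item glue them into a loop of rotation number $p/q+\delta$, at cost $O(e^{-\lambda T})$ per junction;
\item show the total $c^+$-action of the windows is $O(qNe^{-\lambda T})$. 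This holds because the full heteroclinics are $c^+$-static with cyclically matching endpoints, so the Ma\~n\'e potentials sum to zero, and the discarded tails shadow periodic orbits of zero $c^+$-action.
\end{itemize}
Convexity then converts flatness into $c$-length, e.g.\ $\beta'_+(p/q+t)-\beta'_+(p/q)\leqslant t^{-1}[\beta(p/q+2t)-\beta(p/q)-2t\beta'_+(p/q)]$. The paper combines this with the cover $[\beta'_+(p/q),\beta'_+(p/q+s/q^{1+\nu})]$, $q>Q$. That cover only needs right-sided flatness, since every irrational has infinitely many convergents below it.

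Your Farey cover also works with this input, provided ``level'' means Farey order $Q$. It should not mean Stern--Brocot depth: at depth $n$ there are $2^n$ cells, and some, like $(0,1/(n+1))$, have denominators only $n+1$. So your count $Q_n^2$ and your uniform length $e^{-\lambda Q_n}$ are not simultaneously available there. In Farey order, consecutive $p/q<p'/q'$ satisfy $q+q'>Q$. Apply flatness at the endpoint with the smaller denominator and let $t\uparrow 1/(qq')$. For $Q>16N$ this gives each cell length $\leqslant 2CQe^{-\lambda Q/(16N)}$, whence $\sum_i\ell_i^s\to 0$. This, however, requires flatness on both sides of $p/q$; the left-sided case follows from the mirror construction with $c^-$. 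That is one more estimate than the paper needs.
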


Moreover, in the settings of KAM theorem, the next result shows that the devil's staircase cannot be complete. By a KAM circle, we mean a homologically nontrivial circle $\gamma$ that is invariant under the time-1 map of the Euler-Lagrange flow and the dynamics on  $\gamma$ is conjugate to a Diophantine rotation on circle by a $C^2$ diffeomorphism (see Section  \ref{sec: kam}). 
\begin{The}\label{ThmKAM}
    Suppose there is a positive Lebesgue measure set $P$ of Diophantine rotation numbers, such that each Mather set $\tilde{\mathscr M}_h$ (see \eqref{eq:mather-homology}), $h\in P$, is a KAM circle. Then $D\alpha$ has a nontrivial absolutely continuous part. 
\end{The}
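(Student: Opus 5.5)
Since $D\alpha$ is monotone, Lebesgue's decomposition applies, and it suffices to show that the absolutely continuous part of the Stieltjes measure $d(D\alpha)$ is nonzero. By the Lebesgue differentiation theorem, this holds as soon as $D\alpha$ is differentiable with finite, strictly positive derivative $D^2\alpha(c)$ at a positive-measure set of $c$. Equivalently, working with $\beta$ and $D\beta=(D\alpha)^{-1}$: it suffices to find a set of rotation numbers $h$ of positive measure at which $\beta$ is twice differentiable with $D^2\beta(h)<\infty$, because the map $D\beta$ then carries such a set to a set of $c$-values of positive measure. The plan is therefore to use $P$ directly.

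\textbf{Step 1: $D\beta$ is Lipschitz on $P$.} For $h\in P$ the Mather set is a KAM circle $\gamma_h=\{(x,u_h(x))\}$, a Lipschitz graph. It is the graph of $c(h)+\partial_x S_h$, where $S_h$ is a weak KAM solution and $c(h)=D\beta(h)=\int u_h\,d\mu_h$ up to the usual normalization. Two facts are needed here.
\begin{itemize}
\item By the twist condition and the conjugacy of the dynamics on each circle to the rotation $R_h$, the circles are ordered and distinct circles are disjoint.
\item The circle with rotation number $h'$ lies at vertical distance of order $|h-h'|$ from $\gamma_h$.
\end{itemize}
The second fact follows from the twist inequality $\partial_y(\text{time-1 displacement})\ge\delta>0$ on a compact region. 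Concretely, comparing the lifts of the time-1 maps restricted to $\gamma_h$ and $\gamma_{h'}$ gives
\[
|h-h'|\ge \delta\,\min|u_h-u_{h'}| .
\]
Since the circles are ordered, $\min|u_h-u_{h'}|$ is comparable to the mean $\int|u_h-u_{h'}|$. This shows only that the circles do not cluster too fast, which is the wrong direction for an upper bound on $D^2\beta$.

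\textbf{Step 2: the bound in the right direction.} We need $|c(h)-c(h')|\le C|h-h'|$, i.e. circles cannot be too far apart. Here one compares the rotation numbers $h,h'$ with the averages of $u$. The inequality $\max|u_h-u_{h'}|\cdot\max \partial_y(\text{displacement})\ge|h-h'|$ gives $|h-h'|\le C\max|u_h-u_{h'}|$, which is again the opposite direction. So the positive-measure hypothesis must be used instead: $D\alpha$ maps intervals of $c$ to intervals of $h$. The set $E=D\beta(P)$ is measurable (up to null sets). If $E$ had measure zero, then $D\alpha$ would map the null set $E$ onto $P$, which has positive measure, so $D\alpha$ would have a singular part charging $E$. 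That alone is not a contradiction.

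The cleaner route is the Lebesgue density theorem applied to the monotone function $D\beta$ on $P$. A monotone function is differentiable almost everywhere with finite derivative, so $D^2\beta(h)$ exists and is finite for a.e. $h\in P$. What remains is to rule out $D^2\beta(h)=0$, i.e. to show that $D\alpha$ has finite derivative at $c(h)$.

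\textbf{Step 3 (main obstacle): $D^2\beta(h)>0$ on a positive-measure subset of $P$.} This is where the KAM structure and the $C^2$ conjugacy enter. For $h,h'\in P$, Step 1 gives
\[
|c(h)-c(h')|\ge \int|u_h-u_{h'}|\,d\mu_h-\text{small}\ge \tfrac1C\,|h-h'|,
\]
so $D\beta$ is bi-Lipschitz on $P$ from below. I would make this precise using the conjugacies $\phi_h$: average the twist-map relation $\phi_h(\theta+h)=F(\phi_h(\theta))$ in $\theta$, and use the strict monotonicity of $h\mapsto u_h$ together with the uniform bound on the twist. This yields $D^2\beta\ge 1/C>0$ at density points of $P$ where the derivative exists.

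Then $D\alpha$ has derivative $1/D^2\beta(h)\in(0,\infty)$ at $c(h)$ for a positive-measure set of $h$. The image of this set under $D\beta$ has positive measure, because $D\beta$ is expanding-from-below on $P$. By the Lebesgue decomposition, $d(D\alpha)$ therefore has a nontrivial absolutely continuous part.
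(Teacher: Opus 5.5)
Your reduction is sound once corrected (finiteness of $D^2\beta$ alone is not enough, since a strictly increasing singular $D\beta$ has $D^2\beta=0$ a.e.; you fix this later). However, there is a genuine gap exactly where the KAM hypothesis has to enter. That place is Step~3: the lower bound $|c'-D\beta(h)|\ge C^{-1}|\rho-h|$ near points $h\in P$, i.e.\ $D^2\beta(h)>0$. Nothing you wrote establishes it:
\begin{itemize}
\item Step~1's inequality $|h-h'|\ge\delta\min|u_h-u_{h'}|$ bounds the gap between the circles from \emph{above}, so it cannot give a lower bound on $|c(h)-c(h')|$.
\item The only lower bound you have (Step~2) is on $\max|u_h-u_{h'}|$. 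But $|c(h)-c(h')|=\int_{\T}|u_h-u_{h'}|\,dx$ with respect to Lebesgue measure, because $u_h-c(h)$ is exact; it is not $\int u_h\,d\mu_h$.
\item Ordering of the circles gives no comparability between the maximum and the mean of the gap. Using only the uniform Lipschitz bound on the graphs you get $\int|u_h-u_{h'}|\,dx\ge C^{-1}\max|u_h-u_{h'}|^2$, hence only $|c(h)-c(h')|\ge C^{-1}|h-h'|^2$, which says nothing about $D^2\beta(h)$.
\item Both of your twist inequalities hold only with constants depending on the conjugacy; they fail for general circle homeomorphisms.
\end{itemize}

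If you try to do Step~3 dynamically, the constants are the obstacle. Let $g_h$ be the circle conjugacy, $f_h=g_h\circ R_h\circ g_h^{-1}$, and $f_{h'}$ the circle map induced on $\gamma_{h'}$. Writing $g_h^{-1}\circ f_{h'}\circ g_h=R_h+\epsilon$, you get $h'-h=\int\epsilon\,d\nu$ with $\nu=(g_h^{-1})_*\mu_{h'}$ and $0\le\epsilon\le C_h\,(u_{h'}-u_h)\circ g_h$. Hence $h'-h\le C_h\int(u_{h'}-u_h)\,d\mu_{h'}$. Converting this into a bound by $\int(u_{h'}-u_h)\,dx$ costs the density of $\mu_{h'}$, i.e.\ the conjugacy at $h'$. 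These constants are not uniform over $P$, so your ``$1/C$'' is unjustified. The density-point argument needs uniformity in $h'\in P$ near $h$, so it fails unless you first pass to a positive-measure subset of $P$ with uniformly bounded conjugacies.

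The paper avoids all of this by proving the estimate at each fixed $h\in P$, for \emph{all} nearby $\rho$, with a constant depending only on $h$ (Theorem~\ref{thm:strong-convex}):
\begin{itemize}
\item Pulling back by the KAM conjugacy gives a Lagrangian $\hat L_h$ with $\hat\beta_h=\beta$ (Bernard) whose Mather set is $\{v=h\}$.
\item For this Lagrangian, $\partial_v\hat L_h(\cdot,\cdot,h)\equiv\eta_h$ and $\hat L_h(t,\cdot,h)$ is independent of $x$.
\item A second-order Taylor bound plus Jensen along periodic minimizers gives $\beta(\rho)-\beta(h)-D\beta(h)(\rho-h)\ge c_h(\rho-h)^2$.
\end{itemize}
By convexity, $(c'-D\beta(h))(\rho-h)\ge c_h(\rho-h)^2$ for every $c'\in D^-\beta(\rho)$. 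So $D\alpha$ is pointwise Lipschitz at every $c\in(D\alpha)^{-1}(P)$. The decomposition into the sets $E_k$ and the area formula (Proposition~\ref{pro:incomplete}) then produce the absolutely continuous part. This needs no bound $D^2\beta<\infty$, no density points, and no uniformity across $P$.

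With that lemma in hand your endgame would also work, with $D^2\beta\ge c_h$ wherever it exists on $P$. But since $c_h$ varies with $h$, your claim that $D\beta$ maps the good set to a set of positive measure still needs an exhaustion like the $E_k$ argument, not a single ``expanding-from-below'' constant.
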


Geometrically, this is also important. In metric geometry, there is an important quantity called \textit{stable norm}. Given $(M,g)$ is a closed Riemannian manifold, the stable norm $\sigma_g$ is defined as follows. For $h\in H_1(M,\Z)$,
\begin{align*}
    \sigma_g(h):=\inf\{l_g(\gamma):\gamma:S^1\to M, [\gamma]=h\in H_1(M,\Z)\}, 
\end{align*}
where $l_g(\gamma)$ is the length of the curve $\gamma$. For $h\in H_1(M,\Q)$, we can also write
\begin{align*}
	\sigma_g(h):=\inf\{l_g(\gamma)/n:\gamma:S^1\to M,n\in\N,[\gamma]=nh\in H_1(M,\Z)\}.
\end{align*}
Due to the convexity and positively 1-homogenity of $\sigma_g$ on $H_1(M,\Q)$, we have the unique continuous extension on $H_1(M,\R)$:
\begin{align*}
	\sigma_g(h):=\lim_{r\to h,r\in H_1(M,\Q)}\sigma_g(r).
\end{align*}
where $l_g(\gamma)$ is the length of the curve $\gamma$.

When treating one half the Riemannian metric as the Lagrangian, its Euler-Langrange flow is the same as the geodesic flow. We can also introduce $\beta$-function for Riemannian metrics. The relation of the stable norm $\sigma$ to the $\beta$ is $\beta=\sigma_g^2.$ With this identification, we also introduce the \textit{dual stable norm} $$\sigma_g^*(c):= \max_{\sigma_g(h)=1}\langle c, h\rangle.$$ 

The stable norm $\sigma_g$ has positively $1$-homogeneity, i.e. $\sigma_g(\lambda h)=|\lambda|\sigma_g(h)$. Thus, it is only interesting to study a level set of $\sigma_g$ and dually a level set of $\sigma_g^*$.  
\begin{The}\label{ThmStableNorm}
Let $\mathcal G$ be the set of Riemannian metrics on $\T^2$ such that for each $g\in \mathcal G,$ the Mather set $\tilde{\mathscr M}_h$ for all $h\in \mathbb{RP}^1$ is uniformly hyperbolic, then for each $g\in \mathcal G$,  the level set $(\sigma_g^*)^{-1}(1/2)$ as a radial graph over the unit circle is purely singularly continuous. 
\end{The}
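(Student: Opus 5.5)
The statement to prove is Theorem \ref{ThmStableNorm}. I reduce it to Theorem \ref{ThmMain} by treating the geodesic flow of $g$ as the Euler--Lagrange flow of the Tonelli Lagrangian $L=\frac12 g_x(v,v)$ on $\T^2$. This is autonomous, but via a Poincar\'e section or a suitable reparametrization it becomes a twist-map/time-periodic Lagrangian in each direction.

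Homogeneity gives $\beta=\frac12\sigma_g^2$ (with the paper's normalization $\beta=\sigma_g^2$ up to constants). The Legendre dual is then $\alpha=\frac12(\sigma_g^*)^2$. Hence the level set $(\sigma_g^*)^{-1}(1/2)$ is a level set of $\alpha$, namely the energy level at which $\alpha$ takes a fixed value. For $c$ on this level set, the Mather set $\tilde{\mathscr M}(c)$ lies in a fixed energy surface and is the union of $\tilde{\mathscr M}_h$ over $h\in\partial\alpha(c)$. Thus the hypothesis on $\tilde{\mathscr M}_h$ for all $h\in\mathbb{RP}^1$ gives uniform hyperbolicity of $\bigcup_{c\in\alpha^{-1}(a)}\tilde{\mathscr M}(c)$ for the flow. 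Compactness of the level curve, together with the continuity and upper semicontinuity of $c\mapsto\tilde{\mathscr M}(c)$, upgrades this to uniformity over the whole set.

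Next I localize. Cover the unit circle of directions by finitely many arcs. On each arc, whose directions are transverse to some fixed rational direction (say all $h$ with positive first coordinate), I choose a global section $\{x_1=\text{const}\}$ of the geodesic flow restricted to the energy level $\alpha^{-1}(a)$ near the relevant Mather sets. By Maupertuis/Jacobi reduction, the Mather sets become Aubry--Mather sets of a monotone twist map on the annulus, equivalently of a time-periodic Tonelli Lagrangian $\tilde L(x_1,x_2,\dot x_2)$ given by the reduced Jacobi--Carath\'eodory Lagrangian, as in Moser's theorem \ref{thm:moser}. The reduced $\tilde\alpha$-function is related to the level curve of $\alpha$ by a standard identity: if the level curve is written locally as a graph $c_1=-\tilde\alpha(c_2)$ (up to sign conventions), then $\tilde\alpha$ is exactly the one-dimensional $\alpha$-function of the reduced system. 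The rotation number $D\tilde\alpha(c_2)$ equals the slope $h_2/h_1$ of the corresponding homology direction. Hyperbolicity transfers, since hyperbolicity of the flow restricted to the energy surface is equivalent to hyperbolicity of the Poincar\'e map.

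Applying Theorem \ref{ThmMain} to the reduced system shows that $D\tilde\alpha$ is purely singularly continuous on each chart. So the local graph function of the level curve has a derivative that is continuous and monotone, with derivative zero a.e.

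Finally I convert to the radial graph over the unit circle. The map taking the graph coordinate $c_2\mapsto c_2/\sqrt{\tilde\alpha(c_2)^2+c_2^2}$ to an angle is a $C^1$ diffeomorphism, because $\tilde\alpha$ is $C^1$ by strict convexity of $\beta$. The radial function is therefore a $C^1$ function of $c_2$ composed with a $C^1$ change of variable, and the statement to make precise is that the \emph{tangent direction} (the Gauss map) is purely singular. Bi-Lipschitz reparametrizations preserve Lebesgue-null sets, so the a.e. vanishing of the derivative of the tangent slope is preserved. Nonconstancy holds because the curve is closed and convex.

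I expect the main obstacle to be the reduction step: showing that the Maupertuis reduction on a fixed energy level yields a genuine Tonelli twist system whose $\alpha$-function is exactly the local graph of the level curve, with a global section that works uniformly over a whole arc of directions. The difficulty is near directions parallel to the section, which is why several charts are needed. I would handle this using the classical Hedlund/Bangert facts for $\T^2$: minimal geodesics in direction $h$ are graphs over any transverse closed curve.
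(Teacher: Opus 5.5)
Your route is genuinely different from the paper's, but the step that carries all of its content is not established.

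\textbf{How the paper argues.} The paper does no dynamical reduction. It quotes the Klempnauer--Schr\"oder estimate $\sigma_g(h+v)-\sigma_g(h)-[\partial^+\sigma_g(h)](v)\leqslant C|v|e^{-\lambda/|v|}$ at primitive integer $h$ and reruns Lemma \ref{lem:var=0}. Set $\phi(\rho):=\sigma_g(1,\rho)$. Homogeneity with $h=(q,p)$ and $v=(0,q\delta)$ gives $\phi(p/q+\delta)-\phi(p/q)-\phi_+'(p/q)\delta\leqslant C\delta e^{-\lambda/(q\delta)}$. This is \eqref{eq:u-flat} with a summable $u_q$, provided the constants are uniform in $h$. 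The conjugate $\psi$ of $\phi$ then describes one half of the dual sphere as $c_1=-\psi(c_2)$. Your reduced $\tilde\beta$ is this same $\phi$; you want its flatness from Theorem \ref{ThmMain}.

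\textbf{Where your reduction fails as sketched.} First, the coordinate circles $\{x_1=\text{const}\}$ need not be sections for minimal geodesics. Minimal geodesics are also not graphs over arbitrary transverse closed curves; this fails already for the flat metric and a wiggly curve in class $(0,1)$. What is true is that a lifted minimal geodesic with $h_1\neq 0$ crosses each lift of a closed minimal geodesic of class $(0,1)$ exactly once, transversally, because lifted minimal geodesics meet at most once. That is the section to use.

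Second, even with that section, the return map is in general only defined (and only known to twist) near the minimizers, since geodesics leaving the section at small angles may never come back. Moreover, the Jacobi length density with $x_1$ as time, $w\mapsto\sqrt{g_x((1,w),(1,w))}$, is strictly convex but only of linear growth. So neither Theorem \ref{thm:moser} nor Theorem \ref{ThmMain} can be invoked directly. You would have to modify the system outside a compact region. You would then need a priori bounds confining all minimizers with rotation numbers in the chart to the unmodified region. Only then does the modified $\tilde\alpha$ coincide with the local graph of the level curve, with the original Mather sets. That argument is missing, and it is the whole proof.

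\textbf{The uniformity upgrade.} This step is wrong as written: $c\mapsto\tilde{\mathscr M}(c)$ is not upper semicontinuous. For $c_n\downarrow c^+=\beta_+'(p/q)$, Hausdorff limits of $\tilde{\mathscr M}(c_n)$ contain heteroclinic orbits of $\tilde{\mathscr A}(c^+)$, which are not in $\tilde{\mathscr M}(c^+)$. Hyperbolicity of such a limit set would require transversality of the connections, and hyperbolicity of the periodic orbits alone does not give that. Read the hypothesis as uniform hyperbolicity of the union, as in Theorem \ref{ThmMain}; this is what the paper implicitly does.

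\textbf{What is right.} Your final step is correct and more explicit than the paper. The literal radial function of a convex curve is Lipschitz, so it cannot be purely singularly continuous. The statement must therefore concern the Gauss map, i.e.\ $D\psi$ in the chart. A $C^1$ change of variable from $c_2$ to the angle preserves a.e.\ vanishing of the derivative.

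\textbf{Suggested repair.} The simplest fix is to drop the reduction and get the flatness of $\phi$ in one of two ways. Either cite the estimate as the paper does, or run the concatenation argument of Lemmas \ref{lem:beta-delta-upperbound} and \ref{pro:exp-flat-rational-integer} directly for geodesics on $\T^2$. Then apply Lemma \ref{lem:var=0} to $\phi$.
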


There is a generalization of the theory of twist maps to the setting of minimal hypersurfaces in torus by Moser (\cite{MR847308}) and Bangert (\cite{MR991874}). There is an analogue of Theorem \ref{ThmDevil} in \cite{MR2197072}. We expect that there should be also a generalization of our main theorem to this setting.


The paper is organized as follows. In Section \ref{sec:pre}, we provide some preliminary of the Aubry-Mather theory on twist maps and the definitions of Mather set, Aubry set, etc. In Section \ref{SComplete}, we give the proof of the main theorems \ref{ThmMain}, \ref{thm:hausdorff-dim}, \ref{ThmStableNorm} on the completeness of the devil's staircase. In Section \ref{SIncomplete}, we prove Theorem \ref{ThmKAM} on the incompleteness of devil's staircase in the presence of KAM circles. Finally, we have two appendices containing materials on convex analysis and definition of twist maps. 

\medskip
\noindent\textbf{Acknowledgments.}
Jinxin Xue is supported by the National Natural Science Foundation of China  (Grant No.~12271285, 11790273), and by the New Cornerstone Investigator program and Xiaomi Foundation. Tianqi Shi is partially supported by the National Natural Science Foundation of China (Grant No.~12601348).  All authors would like to thank Professor Konstantin Khanin for discussions on related topics in circle maps, twist maps and spectral theory.

\section{Preliminaries}\label{sec:pre}
In this section, we provide some preliminaries on the Aubry-Mather theory of twist maps and Mather's generalization to general Tonelli Lagrangian systems. 
\subsection{The Aubry-Mather theory on twist maps}
We do not use the definition of a twist map directly so we put it in Appendix \ref{sec:twist map}. Instead, we invoke the following theorem relating a twist map to a Tonelli Hamiltonian system.
\begin{The}[Moser, \cite{MR863203}]\label{thm:moser}
	Given a twist map $\Gamma$ on $T^*\T=\T\times\R$, there is a Tonelli Hamiltonian $H(t,x,p):\T\times T^*\T\to\R$ whose time 1-map of the Hamiltonian flow is $\Gamma$. 
\end{The}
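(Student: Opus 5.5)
The statement to prove is Moser's theorem: every twist map $\Gamma$ of $\T\times\R$ is the time-1 map of a Tonelli Hamiltonian flow. The plan is to construct $H$ from a generating function of $\Gamma$, in three steps.

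\textbf{Step 1 (generating function).} By the twist condition (Appendix \ref{sec:twist map}), $\Gamma$ lifted to $\R^2$ has a generating function $h(x,x')$ with
\[
y=-\partial_1h(x,x'), \qquad y'=\partial_2h(x,x').
\]
We take $h(x+1,x'+1)=h(x,x')$ and $\partial_{12}h<0$. After normalizing (e.g.\ asking that the twist be uniform, as in the usual definition), we may also assume $h$ is superlinear in $x'-x$.

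\textbf{Step 2 (interpolating Lagrangian).} We want a Lagrangian $L(t,x,v)$, $1$-periodic in $t$ and Tonelli in $v$, whose time-$1$ action between $x$ and $x'$ equals $h(x,x')$. Equivalently, for each endpoint pair the unique minimizing segment should have action $h$.

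The construction we will try first is Moser's. Choose a one-parameter family of maps $\Gamma_t$, $t\in[0,1]$, of the form $(x,y)\mapsto(x+\tau(t)y+\dots,\,\dots)$, each an exact symplectic twist map for $t>0$. Arrange that $\Gamma_0=\mathrm{id}$ and $\Gamma_1=\Gamma$.

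To do this, interpolate the generating functions as
\[
h_t(x,x')=\frac{(x'-x)^2}{2t}+t\,\big(h(x,x')-\tfrac12(x'-x)^2\big),
\]
or a similar convex-combination trick. Then define $H_t$ as the Hamiltonian generating the isotopy $\Gamma_t\circ\Gamma_s^{-1}$ through the formula
\[
\partial_t S = -H(t,x',\partial_{x'}S).
\]
Here we use the Hamilton–Jacobi equation satisfied by $h_t$ viewed as a function of the endpoint at time $t$.

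\textbf{Step 3 (main obstacle: convexity).} The hard part is showing that the resulting $H(t,x,\cdot)$ is strictly convex and superlinear, i.e.\ Tonelli. For this we will use the twist condition $\partial_{12}h_t<0$ at every $t$ to show that $\partial_p^2H>0$.

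Moser's key trick is to first reduce to the case where the twist map is close to a shear, by composing with a power of the integrable shear. He then uses the fact that $\partial_p^2H$ is expressed through $-1/\partial_{12}h_t$ and its time derivative. If convexity fails, we will reparametrize time, making $\tau(t)$ steep near $t=0$, so that the quadratic term dominates.

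Finally, we smooth $H$ in $t$ at $t=0,1$ so that it extends $1$-periodically while its time-1 map remains $\Gamma$. We also modify $H$ for large $|p|$ to keep the flow complete; this is possible because $\Gamma$ coincides with a shear-like map asymptotically.
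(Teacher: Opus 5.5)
The paper does not prove this statement; it only cites Moser. So I can judge your outline only on its own terms. It has a genuine gap at the step you yourself call the main obstacle.

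Defining $H$ from a twist family $h_t$ through $\partial_t h_t(x,\xi)+H\bigl(t,\xi,\partial_\xi h_t(x,\xi)\bigr)=0$ is legitimate. Your family is also twist on $(0,1]$, since $\partial_{12}h_t=-1/t+t(\partial_{12}h+1)<0$. However, differentiating the Hamilton--Jacobi identity twice in $x$ gives
\[
H_{pp}\bigl(t,\xi,\partial_\xi h_t(x,\xi)\bigr)=\frac{1}{\partial_{12}h_t}\,\partial_x\Bigl(-\frac{\partial_t\partial_x h_t}{\partial_{12}h_t}\Bigr).
\]
This involves $\partial_t h_t$ and third derivatives of $h_t$, so the sign of $\partial_{12}h_t$ alone does not determine it.

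For your interpolation, convexity actually fails. Take $h(x,x')=\tfrac12(x'-x)^2+V(x)$, a standard/Frenkel--Kontorova map. Then $h_t=\frac{(x'-x)^2}{2t}+tV(x)$, and a direct check of Hamilton's equations confirms
\[
H(t,x,p)=\tfrac12p^2-V(x-tp),\qquad H_{pp}=1-t^2V''(x-tp).
\]
This is negative for $t$ near $1$ as soon as $\max V''>1$, which is exactly the strongly nonlinear regime the paper is about.

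Your proposed cure cannot repair this. If $\sigma$ is a reparametrization with $\sigma'>0$, the isotopy $\Gamma_{\sigma(t)}$ is generated by $\sigma'(t)H(\sigma(t),x,p)$. So the sign of $H_{pp}$ at every point of the path is unchanged. Convexity has to be built into the choice of the isotopy itself, and your outline never produces an isotopy for which $H_{pp}>0$ can be checked for a general twist map.

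What is missing is such a choice. One that works is to let the configuration component move along straight segments in the lift, $x(t)=x_0+t(x_1-x_0)$, where $x_1$ is the first component of $\Gamma(x_0,y_0)$. The momentum is then forced by symplecticity through the generating function $h_t$ defined by $\partial_{x_0}h_t(x_0,\xi)=\partial_1h\bigl(x_0,(\xi-(1-t)x_0)/t\bigr)$. With this choice:
\begin{itemize}
\item $\partial_{12}h_t=\partial_{12}h/t<0$;
\item $H_p=x_1-x_0$;
\item differentiating in $x_0$ at fixed $(t,\xi)$ gives $H_{pp}=-1/\partial_{12}h(x_0,x_1)>0$.
\end{itemize}
So the Legendre condition is literally the twist condition. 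This is presumably the formula ``$-1/\partial_{12}h$'' you half-remember, but it holds for this isotopy, not for yours.

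Your last step is not a formality either. The Hamilton--Jacobi construction gives $H$ only for $t\in(0,1]$, and in general $H(0^+,\cdot)\neq H(1,\cdot)$. In the example above these are $\tfrac12p^2-V(x)$ and $\tfrac12p^2-V(x-p)$, so the $1$-periodic extension is discontinuous in $t$. Mollifying $H$ in $t$ changes the time-$1$ map. Forcing periodicity by a reparametrization that is flat at $t=0,1$ makes $H_{pp}=\sigma'H_{pp}$ vanish at integer times, which destroys the Tonelli property. Producing an $H$ that is $C^2$, $1$-periodic in $t$, strictly convex in $p$, and still has time-$1$ map $\Gamma$ requires a separate argument.
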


Let us state the main result of the Aubry-Mather theory for twist maps.  First, it was proved that each minimal configuration has a well-defined rotation number $\varrho=\lim\frac{x_n}{n}$(see \cite{MR719634,mackay1985lectures,MR1323222,MR945963}). The converse is also true: every real number is the rotation number of a minimal configuration. 
This means that each minimal configuration of atoms has a well-defined mean distance, that can be rational or irrational. It becomes particularly interesting when the mean distance is irrational, in which case, the locations of the atoms become incommensurate with the periodic potential. 
 
    Let $\varrho\in\R$ and $\mathscr Q$ be the set of minimal configurations, we denote 
	\begin{enumerate}[\rm (1)]
		\item by $\mathscr Q_\varrho:=\{\bm{x}\in \mathscr Q:\lim_n\frac{x_n}{n}=\varrho\}$ the set of minimal configurations with rotation number $\varrho$,
		\item and by $\mathbf Q_\varrho:=\{ \{\ldots,(y_i,x_i),\ldots\}:\bm {x}\in \mathscr Q_\varrho\}$ the union of the associated orbits of the twist map for each configuration in $\mathscr Q_\varrho$.
	\end{enumerate}
 
    \begin{The}[Classification of minimal configurations]\label{ThmAubryMather}\
	\begin{enumerate}[\rm (1)]
		\item $($Irrational rotation number$)$ If the rotation number $\varrho$ is irrational, then  
        \begin{enumerate}[\rm (a)]
            \item  either $\mathbf Q_\varrho$ is an invariant circle,
            \item or  $\mathbf Q_\varrho$ is a Cantorus, with possibly a set of nonrecurrent orbits. 
        \end{enumerate}
       
		\item $($Rational rotation number$)$ If the rotation number $\varrho$ is rational, then 
  \begin{enumerate}[\rm (a)]
      \item either $\mathbf Q_\varrho$ is an invariant circle consisting of periodic orbits,
      \item or  $\mathscr Q_\varrho=\mathscr Q_{\varrho^0}\cup \mathscr Q_{\varrho^+}\cup \mathscr Q_{\varrho^-}$ disjoint union: 
      \begin{enumerate}[\rm (i)]
          \item The set $\mathbf Q_{\varrho^0}$ consists of periodic orbits and there are gaps of $\mathscr Q_{\varrho^0}$ on $\Z\times \R$. 
          \item The set $\mathscr Q_{\varrho^+}$ $($respectively $\mathscr Q_{\varrho^-})$ consists of configurations lying in the gaps approaching the upper $($respectively lower$)$ configuration $\mathscr Q_{\varrho^0}$ in the future, and the lower $($respectively upper$)$ configuration $\mathscr Q_{\varrho^0}$ in the past.
      \end{enumerate}
  \end{enumerate}
	\item $($Total order property$)$
    \begin{enumerate}[\rm (a)]
			\item In case $(1)$ and $(2.a)$, the set $\mathscr Q_\varrho$ is totally ordered on $\Z\times \R$. 
			\item In case $(2.b)$, each of the sets $\mathscr Q_{\varrho^0}\cup \mathscr Q_{\varrho^+}$ and $\mathscr Q_{\varrho^0}\cup \mathscr Q_{\varrho^-}$ is totally ordered. 
		\end{enumerate}
\end{enumerate}
\end{The}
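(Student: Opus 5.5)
Theorem \ref{ThmAubryMather} is classical Aubry--Mather theory rather than a new result of this paper, and I would prove it in the standard variational way. I would work with the generating function $h$ of the twist map, or equivalently, by Theorem \ref{thm:moser}, with the action of a Tonelli Lagrangian. I would take for granted that minimal configurations are exactly the minimizers on every finite segment, and that they satisfy the Euler--Lagrange equation \eqref{EqELTwistMap}.

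The first tool is Aubry's crossing lemma. Let $\bm x \neq \bm x'$ be minimal configurations. Then:
\begin{itemize}
\item they cross at most once;
\item they cannot touch without crossing (if $x_i = x'_i$ they cross transversally there);
\item by periodicity of $h$, the same holds for any translate $\tau_{p,q}\bm x = (x_{i+p}+q)_i$.
\end{itemize}
The proof is the usual exchange argument. If two minimal segments crossed twice, swapping the pieces between the crossings would give two competitors whose total action is no larger. The twist condition $\partial_{12}h<0$ then makes one of them strictly smaller unless the segments coincide. This is the key step I expect to carry the whole argument, and the main technical care is handling the touching case. For that I would use the maximum principle coming from $\partial_{12}h<0$ applied to \eqref{EqELTwistMap}.

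From the crossing lemma I get that every minimal $\bm x$ is \emph{Birkhoff}: its translates are totally ordered among themselves. A standard argument then gives that the rotation number $\varrho=\lim x_n/n$ exists. It also shows that the orbit closure projects to a graph over $\T$ that is order-preserving, i.e.\ it behaves like the orbit of a circle homeomorphism.

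For existence of every $\varrho$ I would do the following:
\begin{itemize}
\item For rational $\varrho=p/q$, minimize the periodic action over $(q,p)$-periodic sequences. This needs superlinearity of $h$ for coercivity, and periodic minimizers are then globally minimal by the crossing lemma.
\item For irrational $\varrho$, take limits of $p_n/q_n$-minimizers. Compactness comes from the uniform Lipschitz bound on Birkhoff configurations, and minimality is closed under pointwise limits.
\end{itemize}

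For the classification in the irrational case I would use Denjoy theory. The recurrent minimal configurations with rotation number $\varrho$ are totally ordered: if two of them crossed, that would contradict the order of translates, via the crossing lemma applied to $\bm x$ and translates of $\bm x'$. Hence their union projects to a minimal set of a circle homeomorphism with irrational rotation number, which is either the whole circle (giving an invariant circle) or a Cantor set (a Cantorus). Lipschitz regularity of the inverse projection, which is Mather's graph theorem, turns this into a Lipschitz graph in $\T\times\R$. Any non-recurrent minimal configurations lie in the gaps.

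For rational $\varrho=p/q$, the periodic minimizers $\mathscr Q_{\varrho^0}$ are totally ordered by the same crossing argument. Either they fill the circle, which is case (2.a), or they leave gaps. In each gap, non-periodic minimal configurations are heteroclinic between the two boundary periodic ones. The two directions give $\mathscr Q_{\varrho^\pm}$, with existence obtained by minimizing with asymptotic boundary conditions. The asymptotic direction is forced by monotonicity: $\bm x$ and $\tau_{q,-p}\bm x$ cannot cross, so $x_{i+q}-p-x_i$ has a fixed sign.

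Total order of $\mathscr Q_{\varrho^0}\cup\mathscr Q_{\varrho^+}$ follows because two configurations of the same type in a gap, both moving upward, crossing once would contradict their common asymptotics at both ends. The crossing lemma plus the asymptotics allows either zero or two crossings, so it must be zero. The main obstacle I anticipate is this ordering argument in case (2.b), together with Mather's Lipschitz graph estimate needed in case (1).
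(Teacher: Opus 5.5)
The paper gives no proof of this theorem; it is quoted as classical background from Aubry--Le Daeron, Mather and Bangert. Your outline follows the standard variational route of those references, and the crossing lemma, the Denjoy dichotomy and the monotone $\tau_{q,-p}$-iterates are the right ingredients.

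\textbf{The gap in (3.b).} The proof fails at the step you flag yourself, the total order in (3.b). You argue that for two configurations of $\mathscr Q_{\varrho^+}$ in the same gap ``the crossing lemma plus the asymptotics allows either zero or two crossings''. This is false. Both configurations tend to the lower boundary configuration as $i\to-\infty$ and to the upper one as $i\to+\infty$. But common limits say nothing about whether the sign of $x_i-x'_i$ near $-\infty$ agrees with its sign near $+\infty$. So a single crossing is compatible both with the asymptotics and with the crossing lemma.

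\textbf{The missing idea} is the exchange argument for asymptotic configurations. Suppose $\bm x,\bm x'$ are minimal, cross, and $|x_n-x'_n|\to0$ as $n\to+\infty$. Work on $[m,n]$ with $m$ to the left of the crossing.
\begin{itemize}
\item $\min(\bm x,\bm x')$ and $\max(\bm x,\bm x')$ have exactly the left endpoints of $\bm x$ and $\bm x'$, and right endpoints off by $|x_n-x'_n|$.
\item Their total action is at most that of $\bm x$ and $\bm x'$, and correcting the endpoints costs $O(|x_n-x'_n|)\to0$.
\item This forces $\min(\bm x,\bm x')$ to be minimal on every finite segment.
\item Since $\min(\bm x,\bm x')$ coincides with $\bm x$ on a half-line, the recursion \eqref{EqELTwistMap} gives $\min(\bm x,\bm x')=\bm x$, contradicting the crossing.
\end{itemize}
The same argument is also needed, and is absent from your sketch, for (3.a) in the Cantorus case. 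You order only the recurrent configurations and put the others ``in the gaps''. Two configurations in one gap must still be shown not to cross. They are asymptotic at both ends, because the projected gaps are pairwise disjoint and so the gap widths are summable.

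\textbf{The order of your steps.} A second issue is that the Birkhoff property does not follow from the crossing lemma applied to $\bm x$ and its own translates. Take the concave profile $x_i=ai$ for $i\le 0$, $x_i=bi$ for $i\ge 0$, with $a>b$. It crosses each translate $(x_{i+k}+l)_i$ at most once, since the difference is monotone in $i$. Yet it is not Birkhoff: it crosses that translate whenever $bk+l<0<ak+l$.

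The classical proof proceeds as follows.
\begin{enumerate}
\item Construct the periodic Birkhoff minimizers first.
\item Since $\bm x$ crosses each of their translates at most once, $\lim x_i/i$ exists and is the same at $\pm\infty$.
\item A crossing of $\bm x$ with $(x_{i+k}+l)_i$ then forces $k\varrho+l=0$.
\item These remaining crossings are excluded by the asymptotic argument above, together with comparison against periodic minimizers.
\end{enumerate}
Likewise, global minimality of minimizers of the periodic action is not a consequence of the crossing lemma, which presupposes minimality. It needs the min--max exchange within the class of periodic configurations. So existence must come before the Birkhoff property, not after it.
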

\subsection{The variational theory of Hamiltonian dynamics}\label{subsec:cvham}
 For a closed smooth Riemannian manifold $M$ and any $x\in M$, $T_xM$ and $T_x^*M$ are denoted by the tangent and cotangent spaces of $M$ at $x$, respectively. $TM$ and $T^*M$ are the tangent and cotangent bundle of $M$, respectively. Moreover, we consider the $m$-dimensional torus $\T^m\cong\R^m/\Z^m$ (especially $m=1,2$) as a prototypical example in the following. 

A $C^2$ function $H:\T\times T^*M\to\R$ is said to be a \textit{Tonelli Hamiltonian}, if it satisfies the following assumptions:
\begin{enumerate} [(H1)]
		\item the Hessian $D_{pp}^2H(t,x,p)$ is positively definite for all $(t,x,p)\in \T\times T^*M$;
		\item $\theta_0(|p|_x)-c_0\leqslant H(t,x,p)\leqslant \theta_1(|p|_x)+c_1$ for all $(t,x,p)\in \T\times T^*M$, where $\theta_0,\theta_1:[0,+\infty)\to[0,+\infty)$ are two superlinear functions and $c_0,c_1>0$.
	\end{enumerate}
A \textit{Lagrangian} $L:\T\times TM\to\R$ associated to Tonelli Hamiltonian $H$ is determined through the Legendre transformation (convex conjugate)
\begin{align*}
	L(t,x,v):=\max_{p\in T_x^*M}\left\{\langle p,v\rangle-H(t,x,p)\right\}.
\end{align*}
It is not hard to check that, $L$ is of class $C^2$ and also Tonelli:
\begin{enumerate} [(L1)]
	\item the Hessian $D_{vv}^2L(t,x,v)$ is positively definite for all $(t,x,v)\in \T\times TM$;
	\item $\theta_1^*(|v|_x)-c_1\leqslant L(t,x,v)\leqslant \theta_0^*(|v|_x)+c_0$ for all $(t,x,v)\in \T\times TM$, where $\theta_0^*,\theta_1^*:[0,+\infty)\to[0,+\infty)$ are the convex conjugate of $\theta_0$ and $\theta_1$, respectively.
\end{enumerate}

Given an absolutely continuous curve $\gamma:[a,b]\to M$, the action of $\gamma$ is defined by
\begin{align*}
	S(\gamma):=\int_a^bL(t,\gamma(t),\dot\gamma(t))\,dt.
\end{align*} 

Hereafter, we shall use $\Phi_L$ (resp. $\Phi_H$) to denote the Lagrangian (Hamiltonian) flow induced by Euler-Lagrange (Hamilton) equation. In this case, $\Phi_L^t(s,x,v)=(s+t,d\gamma(t;s,x,v))$, where $\gamma$ is the solution to Euler-Lagrangian equation with initial value $d\gamma(s)=(x,v)$. This is analogous to Hamiltonian flow $\Phi_H$. Sometimes, without causing confusion, we tend to view $\Phi_L$ ($\Phi_H$) as a flow acting on the (co)tangent bundle and thus we also denote $\Phi_L^{s,t}$ as the non-autonomous Lagrangian flow from time $s$ to $t$: $\Phi_L^{s,t}(x,v)=d\gamma(t;s,x,v)$. 


\subsubsection{(Co)homology approach: $\alpha$ and $\beta$-function}We denote $H^1(M,\R)$ as the first de Rham cohomology cohomology group of $M$. As for the case of $M=\T^m$, we have $H^1(\T^m,\R)\cong \R^m$. 
The $\alpha$\textit{-function} is defined as 
\begin{align}\label{eq:alpha}
	\alpha(c):=-\min_{\mu\in\mathfrak M_L}\int_{\T\times TM}L(t,x,v)-\eta_c(v)\,d\mu,\,\,\,\,\forall [\eta_c]=c\in H^1(M,\R),
\end{align}where $\mathfrak M_L$ is all Borel probability measures on $\T\times TM$ invariant by the Euler-Lagrange flow $\Phi_L$. 

In the case of Tonelli Lagrangian $L$, one can obtain that $\alpha$-function is convex, finite everywhere with superlinear growth. Therefore, the Legendre transformation generates a function $\beta: H_1(M,\R)\to\R$
\begin{align*}
	\beta(h):=\max_{c\in H^1(M,\R)}\left\{\langle h,c\rangle-\alpha(c)\right\},
\end{align*}
which is also finite everywhere and convex with superlinear growth, called $\beta$\textit{-function}. There is a direct and intrinsic way to define the $\beta$-function. Here $\langle\cdot,\cdot\rangle$ denotes the canonical pairing between (de Rham) cohomology and homology. 
\begin{Pro}[\cite{Mather1991}]Assume that $\mu\in\mathfrak M_L$ and $[\eta]=c\in H^1(M,\R)$.
\begin{enumerate}[\rm (1)]
	\item If $\eta$ is exact, i.e., $[\eta]=0$, then $\int_{TM}\eta\,d\mu=0$;
	\item There exists $\rho(\mu)\in H^1(M,\R)$ (called the rotation vector of $\mu$) such that
         \begin{align*}
	        \langle[\eta],\rho(\mu)\rangle=\int_{TM}\eta\,d\mu
         \end{align*}for every $\eta$ is a closed 1-form on $M$;
    \item For $h\in H^1(M,\R)$, we have some $\mu\in\mathfrak M_L$ with $\rho(\mu)$. Furthermore, 
    \begin{align}\label{eq:beta}
    	\beta(h)=\min\left\{\int_{\T\times TM}L\,d\mu:\rho(\mu)=h\right\}.
    \end{align}  
\end{enumerate}	
\end{Pro}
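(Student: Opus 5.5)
The plan is to treat the three items in order. For (1) and (2) I will use only the invariance of $\mu$ and the linear structure of closed $1$-forms. For (3) I will use a duality argument that goes through the $\alpha$-function \eqref{eq:alpha}. Throughout, I restrict to invariant measures with $\int L\,d\mu<\infty$. Superlinearity (L2) then gives $\int |v|\,d\mu<\infty$, so $\int\eta\,d\mu$ is finite for every bounded smooth $1$-form $\eta$. Measures with infinite action play no role in \eqref{eq:alpha} or \eqref{eq:beta}.

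For (1), write $\eta=df$ with $f\in C^\infty(M)$. Along an orbit $\Phi_L^t(s,x,v)=(s+t,\gamma(t),\dot\gamma(t))$ we have $\frac{d}{dt}f(\gamma(t))=df(\dot\gamma(t))$. Integrating over $t\in[0,1]$ and applying Fubini gives
\[
\int_0^1\!\!\int df(v)\circ\Phi_L^t\,d\mu\,dt=\int \bigl(f\circ\pi\circ\Phi_L^1-f\circ\pi\bigr)\,d\mu .
\]
Fubini is justified because $\int|v|\,d\mu<\infty$ and $df$ is bounded. By invariance of $\mu$, the left side equals $\int df(v)\,d\mu$. The right side vanishes by invariance, since $f$ is bounded.

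For (2), the map $\eta\mapsto\int\eta\,d\mu$ is linear on closed forms and, by (1), vanishes on exact forms. It therefore descends to a linear functional on the finite-dimensional space $H^1(M,\R)$. Such a functional is represented by a unique element $\rho(\mu)\in H_1(M,\R)=H^1(M,\R)^*$ through the de Rham pairing.

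For (3), there are two inequalities. The lower bound is immediate. If $\rho(\mu)=h$, then for every $c$ we get $\int L\,d\mu=\int (L-\eta_c)\,d\mu+\langle c,h\rangle\ge \langle c,h\rangle-\alpha(c)$. Taking the supremum over $c$ gives $\int L\,d\mu\ge\beta(h)$.

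For the reverse inequality I will choose $c\in\partial\beta(h)$, so that $h\in\partial\alpha(c)$, and produce a $c$-minimizing measure $\mu$ with $\rho(\mu)=h$. Such a measure satisfies $\int L\,d\mu=\langle c,h\rangle-\alpha(c)=\beta(h)$, and in particular the minimum is attained. Two facts are needed for this.

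First, the set $\mathfrak M(c)$ of minimizers in \eqref{eq:alpha} is nonempty, convex and compact. Compactness comes from the tightness of measures with bounded action, which follows from superlinearity, together with lower semicontinuity of $\mu\mapsto\int(L-\eta_c)\,d\mu$ in the topology of test functions of linear growth. The set of invariant measures is closed in this topology.

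Second, the rotation set $\rho(\mathfrak M(c))$ is exactly $\partial\alpha(c)$. The inclusion $\subseteq$ is the computation already done in the lower bound: for a minimizer and any $c'$, $\alpha(c')\ge\alpha(c)+\langle c'-c,\rho(\mu)\rangle$. For $\supseteq$ I will use a Danskin-type envelope argument. The one-sided directional derivative $\alpha'(c;w)$ equals $\max_{\mu\in\mathfrak M(c)}\langle w,\rho(\mu)\rangle$. To prove this, take minimizers $\mu_\epsilon$ for $c+\epsilon w$ and use compactness to extract a limit in $\mathfrak M(c)$, along which the rotation vectors converge.

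Since $\rho(\mathfrak M(c))$ is a compact convex set whose support function equals that of $\partial\alpha(c)$, the two sets coincide. This follows from Hahn--Banach separation; when $M=\T$ it reduces to comparing left and right derivatives. Convexity of $\mathfrak M(c)$ and linearity of $\rho$ then give a minimizer with $\rho(\mu)=h$.

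I expect the main obstacle to be the compactness and envelope step: checking that minimizers for $c+\epsilon w$ converge to minimizers for $c$ with convergent rotation vectors. This requires uniform integrability of $|v|$, which the superlinear bounds in (L2) supply. The rest is linear algebra and convex duality.
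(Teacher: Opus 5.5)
The paper gives no proof of this proposition; it only cites Mather, so the comparison is with Mather's route. Your proof is correct. Items (1) and (2) are the standard argument, and your lower bound in (3) is fine.

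For attainment and existence in (3) you go through $\alpha$ and a Danskin-type envelope formula. In Mather's paper the logic essentially runs the other way: $\beta$ is \emph{defined} as the infimum in \eqref{eq:beta}, this infimum is shown to be attained by compactness of action sublevel sets and continuity of $\rho$, and $\alpha$ is introduced afterwards as the conjugate.

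Transplanted to the paper's definitions, the ingredients you already use give a shorter proof.
\begin{itemize}
\item Let $\tilde\beta(h)$ be the infimum in \eqref{eq:beta}. It is convex, since $\mathfrak M_L$ is convex and both $\mu\mapsto\rho(\mu)$ and $\mu\mapsto\int L\,d\mu$ are affine.
\item $\tilde\beta$ is lower semicontinuous with attained infimum, by your compactness facts. It is proper, since $\alpha$ is finite and $L$ is bounded below.
\item Interchanging infima gives $\tilde\beta^*(c)=\sup_\mu\bigl(\int\eta_c\,d\mu-\int L\,d\mu\bigr)=\alpha(c)$. Fenchel--Moreau then yields $\tilde\beta=\alpha^*=\beta$.
\item If $h\in\partial\alpha(c)$, the minimizer $\mu_h$ for $\tilde\beta(h)$ satisfies $\int(L-\eta_c)\,d\mu_h=\beta(h)-\langle c,h\rangle=-\alpha(c)$. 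Hence $\partial\alpha(c)\subseteq\rho(\mathfrak M(c))$ comes for free.
\end{itemize}
The envelope step you expected to be the main obstacle can therefore be bypassed. What your route adds is the explicit formula $\alpha'(c;w)=\max_{\mu\in\mathfrak M(c)}\langle w,\rho(\mu)\rangle$.

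One caveat applies to both routes. The existence of some $\mu$ with $\rho(\mu)=h$ comes only from $\beta(h)<\infty$, which you need for $\partial\beta(h)\neq\varnothing$. That finiteness is the superlinearity of $\alpha$ asserted just before the proposition. Using it is legitimate in context. However, superlinearity of $\alpha$ is itself normally proved by exhibiting invariant measures with prescribed rotation vector and controlled action. For $M=\T$, for example, a $q$-periodic minimizer with rotation number $p/q$ has average action at most $\max_{t,x}L(t,x,p/q)$, by comparison with $t\mapsto x+tp/q$. So the constructive content of the existence claim sits in that input, and a self-contained proof of (3) should include such a step.

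Also, for (1) and (2) you only need $\int|v|\,d\mu<\infty$, not finite action. Some such restriction is unavoidable, since otherwise $\int\eta\,d\mu$ may be undefined, and Mather builds it into his space of measures.
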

A specific class of $\mu\in\mathfrak M_L$ is an ergodic measure. Suppose $d\gamma$ is a trajectory of flow $\Phi_L$, i.e.,  $(t,d\gamma(t))=\Phi_L^t(0,d\gamma(0))$, $t\in\R$.  For any $T>0$, we define a Borel probability measure $\mu_T$ on $TM$ by Riesz representation theorem
\begin{align*}
	\mu_T(f):=\frac{1}{2T}\int_{-T}^Tf(\Phi_L^t(0,d\gamma(0)))\,dt,\,\,\,\,\forall f\in C(\T\times TM).
\end{align*}Since $d\gamma$ is supported in a compact subset of $TM$, As $T\to\infty$, we can extract a sequence of $\{\mu_T\}_{T>0}$ such that it weakly converges to some Borel probability measure $\mu\in\mathfrak M_L$. Furthermore, we can also get 
\begin{Pro}[\protect{\cite[Proposition 1 and 2]{Mather1991}}]\label{pro:mather1991}Suppose $\widetilde M$ is the abelian covering space of $M$.
 Let $\tilde\gamma_i:[a_i,b_i]$ be a sequence of global minimizers such that $\frac{\tilde\gamma(b_i)-\tilde\gamma(a_i)}{b_i-a_i}\to h\in H_1(M,\R)$ and $b_i-a_i\to\infty$ as $i\to\infty$, then $\frac{1}{b_i-a_i}S(\tilde\gamma_i)\to\beta(h)$.
\end{Pro}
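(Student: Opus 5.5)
The plan is to prove matching lower and upper bounds for the averages $\frac{1}{b_i-a_i}S(\tilde\gamma_i)$. Write $T_i=b_i-a_i$ and $h_i=(\tilde\gamma_i(b_i)-\tilde\gamma_i(a_i))/T_i\to h$. Fix $c\in\partial\beta(h)$, so that $\beta(h)=\langle c,h\rangle-\alpha(c)$.

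\textbf{Lower bound.} I would show that there is a constant $C$, independent of the curve and of $T$, such that every absolutely continuous curve $\tilde\gamma:[a,b]\to\widetilde M$ satisfies
\begin{align*}
S(\tilde\gamma)-\langle c,\tilde\gamma(b)-\tilde\gamma(a)\rangle\geqslant -\alpha(c)(b-a)-C.
\end{align*}
Here $\langle c,\cdot\rangle$ is the integral of the lift of $\eta_c$.
\begin{enumerate}[\rm (1)]
\item First, close the curve up. Append to $\tilde\gamma$ a connecting segment of time length in $[1,2]$, chosen so that the total time becomes an integer. Choose it so that its projection ends at the projection of $\tilde\gamma(a)$, and so that it has bounded velocity.
\item The resulting curve is closed in $\T\times M$. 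By the Tonelli bounds (L2) and compactness of $M$, the connecting segment costs at most a constant, both in $L$-action and in its $\eta_c$-integral.
\item The closed curve defines a holonomic (closed) probability measure. By Mañé's theorem, the infimum of $\int (L-\eta_c)$ over holonomic measures equals the infimum over $\mathfrak M_L$, which is $-\alpha(c)$. This gives the inequality above.
\item Dividing by $T_i$ yields
\begin{align*}
\frac{S(\tilde\gamma_i)}{T_i}\geqslant \langle c,h_i\rangle-\alpha(c)-\frac{C}{T_i}\;\longrightarrow\;\beta(h).
\end{align*}
\end{enumerate}
This lower bound holds for all curves, not only minimizers.

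\textbf{Upper bound.} Here I use minimality: $S(\tilde\gamma_i)$ is at most the action of any competitor with the same endpoints on $[a_i,b_i]$.
\begin{enumerate}[\rm (1)]
\item By \eqref{eq:beta}, pick $\mu$ with $\rho(\mu)=h$ and $\int L\,d\mu=\beta(h)$.
\item Write $\mu=\sum_k\lambda_k\mu_k$, up to an $\varepsilon$-approximation by finitely many ergodic components, with rotation vectors $h_k$. Take Birkhoff-generic orbits of the $\mu_k$.
\item Build the competitor by concatenating: a bridge from $\tilde\gamma_i(a_i)$, then a generic orbit of $\mu_k$ for time roughly $\lambda_kT_i$ for each $k$, then bridges between consecutive pieces.
\item Each bridge has time length $1$ (adjusted to integers) and costs $O(1)$.
\item By the ergodic theorem, the total action is $T_i\beta(h)+o(T_i)+O(\varepsilon T_i)$, and the displacement is $T_ih+o(T_i)$.
\end{enumerate}

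\textbf{Main obstacle.} The delicate step is matching the endpoint $\tilde\gamma_i(b_i)$ exactly. The displacement error is $o(T_i)$ because $h_i\to h$ and by the ergodic theorem.
\begin{enumerate}[\rm (1)]
\item Reserve a final segment of time $\delta T_i$ and cover the error with it. I would take a straight segment in $\widetilde M$ with velocity $o(T_i)/(\delta T_i)$, which is bounded.
\item Its cost is at most $\delta T_i\max_{|v|\le 1}|L|$.
\item Shorten the orbit pieces proportionally. This changes the action by $O(\delta T_i)$ and the displacement by $O(\delta T_i)$. The latter is also absorbed, because the correcting velocity then stays bounded, of size $O(1)$.
\item Let $\delta\to0$ and $\varepsilon\to0$ after $i\to\infty$.
\end{enumerate}
This gives $\limsup S(\tilde\gamma_i)/T_i\leqslant\beta(h)$, and together with the lower bound the limit is $\beta(h)$.

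The step I expect to require the most care is the uniform constant $C$ in the lower bound, that is, the closing argument combined with the holonomic-measure characterization of $\alpha$. If that characterization is unavailable, I would instead build the invariant measure directly: close the curve up, repeat the closed loop periodically in time, take a limit of the resulting averages, and use that the $\eta_c$-integral is a topological invariant of the closed loop.
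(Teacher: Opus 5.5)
The paper gives no proof of this statement; it is quoted from Mather. Your proposal is a correct proof, modulo standard inputs: Ma\~n\'e's theorem, and the existence of a minimizing measure with prescribed rotation vector.

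The main contrast is with the other standard route to the lower bound, via occupation measures. There one takes weak limits of the normalized occupation measures of the $\tilde\gamma_i$ on $\T\times TM$. The a priori velocity bound for minimizers with bounded average displacement gives compactness. Since the $\tilde\gamma_i$ are Euler--Lagrange orbits, every limit is $\Phi_L$-invariant with rotation vector $h$. Lower semicontinuity of $\mu\mapsto\int L\,d\mu$ then gives $\liminf S(\tilde\gamma_i)/T_i\geqslant\beta(h)$. That route needs only the definition \eqref{eq:beta}, but it uses minimality in the lower bound too and gives no rate.

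Your lower bound instead rests on the uniform bound $S_c\geqslant -C$ for all curves. This is Ma\~n\'e's holonomic characterization of $\alpha(c)$ in the time-periodic setting, equivalently the finiteness of the Ma\~n\'e potential $\Psi_c$ that the paper itself uses in Section~\ref{SComplete}. In exchange you get an explicit $O(1/T_i)$ error and a statement that does not need minimality.

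Your upper bound is the usual competitor construction and is fine, with three small fixes.
\begin{enumerate}
\item The corrector has velocity close to $h$, so its cost is bounded by $\delta T_i\max_{|v|\leqslant|h|+1}|L|$, not by $\max_{|v|\leqslant 1}|L|$.
\item With an $\varepsilon$-approximate ergodic decomposition the corrector velocity is $h+O(\varepsilon/\delta)+o(1)$, so you must let $\varepsilon\to0$ before $\delta\to0$.
\item For general $M$, replace the straight segment by a geodesic of $\widetilde M$; its velocity stays bounded because the deck group acts cocompactly.
\end{enumerate}

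Your fallback for the lower bound would not work as written. The periodically repeated loop is not an orbit of $\Phi_L$. Its time averages just reproduce the holonomic measure of the loop, which is not invariant, so the definition of $\alpha(c)$ as a minimum over $\mathfrak M_L$ says nothing about it.

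If you want to avoid Ma\~n\'e's theorem, use a Lipschitz, time-periodic function $u$ on $\T\times M$ dominated by $L-\eta_c+\alpha(c)$, for instance a weak KAM solution. Then $S_c(\gamma)\geqslant u(b,\gamma(b))-u(a,\gamma(a))\geqslant-\mathrm{osc}\,u$ for every curve. This is your inequality, with no closing step needed.
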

According to Proposition \ref{pro:mather1991},
\begin{align}\label{eq:beta-2}
	\beta(h)=\min\left\{\liminf_{T\to\infty}\frac{1}{2T}\int_{-T}^TL(\Phi_L^t(0,d\tilde\gamma(0)))\,dt: \rho(\gamma)=h\right\},
\end{align}
here $\tilde\gamma$ is the lifting of $\gamma$ on $\widetilde M$ and $\rho(\gamma):=\lim_{T\to\infty}\frac{\tilde\gamma(T)-\tilde\gamma(-T)}{2T}$.

The next result includes zeroth and first order properties of the $\alpha$ and $\beta$ functions in the case of $M=\T$, which can be applied to twist map by Theorem \ref{thm:moser}. 
\begin{The}[\cite{MR719055,MR1384394,MR967638}]\label{thm:beta-striconvex}Suppose $M=\T$. 
\begin{enumerate}[\rm (1)]
	\item $\beta$ is always differentiable on $\R\setminus\Q$ and generically is not differentiable on $\Q$;
	\item $\beta$ is strictly convex everywhere. Therefore, $\alpha\in C^1(\R)$.
\end{enumerate}

\end{The}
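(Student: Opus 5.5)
We prove the two items in turn, working on the universal cover $\R$ with the lifted minimizers of Proposition \ref{pro:mather1991} and the order structure of Theorem \ref{ThmAubryMather}.

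\emph{Strict convexity of $\beta$.} Suppose, for contradiction, that $\beta$ is affine on some interval $[h_1,h_2]$ with $h_1<h_2$. By Legendre duality there is then a cohomology class $c$ with $\alpha(c)+\beta(h)=ch$ for every $h\in[h_1,h_2]$. Hence the $c$-minimal measures, i.e.\ those in $\tilde{\mathscr M}(c)$, have rotation numbers filling $[h_1,h_2]$.

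\begin{enumerate}[(i)]
\item First show that all $c$-semistatic (Aubry-set) curves are pairwise non-crossing. Two lifted $c$-minimizers that crossed twice could be cut and exchanged, producing curves of no larger action. Their corners could then be smoothed to strictly reduce the action. This contradicts minimality, since a minimizer solves the Euler--Lagrange equation and the exchanged curve would not be $C^1$. The one-dimensionality of $\T$ is essential here, because two curves in $\R$ that cross twice must bound a lens.
\item Next show that the Mather set projects injectively to $\T\times\T$ (Mather's graph theorem). Non-crossing then gives a total order on the supports of the minimal measures.
\item Totally ordered non-crossing curves on $\R$ all share a single rotation number: if $\rho(\gamma_1)<\rho(\gamma_2)$, the lifts $\gamma_2$ and $\gamma_1+k$ must cross for suitable integer shifts $k$. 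This contradicts the existence of measures with two distinct rotation numbers $h_1\neq h_2$.
\end{enumerate}

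Therefore $\beta$ is strictly convex. The convex conjugate of a strictly convex, superlinear function is $C^1$, which gives $\alpha\in C^1(\R)$.

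\emph{Differentiability of $\beta$ at irrational $h$.} Fix an irrational $h$. The subdifferential $\partial\beta(h)$ is an interval $[c^-,c^+]$, and we must show $c^-=c^+$. For every $c$ in this interval, $\tilde{\mathscr M}(c)$ consists of measures with rotation number $h$. By the classification in Theorem \ref{ThmAubryMather}(1), the minimal measure with irrational rotation number is unique: it is supported on the unique recurrent invariant circle or Cantorus. So the Mather sets for all $c\in[c^-,c^+]$ coincide.

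Now use the barrier (Peierls) function. Take $c'>c$ in the interval. The action difference along any $c$-minimal curve that crosses a gap of the Cantorus is strictly monotone in $c$. Together with the fact that the Aubry set equals the Mather set (for irrational $h$ every minimal configuration is asymptotic to the Cantorus), this forces $\alpha$ to have slope $h$ on $[c^-,c^+]$ while the barrier functions stay equal. The cleaner route uses Bangert's/Mather's theorem that $D\alpha$ is single-valued at $c$ whenever the Aubry set supports a unique rotation number, applied in dual form. The main obstacle is exactly this dual-side statement. To resolve it, I would show directly that $\beta$ has a unique subgradient. Approximate $h$ by rationals $p/q\to h$; the Mather sets of classes $c_{p/q}\in\partial\beta(p/q)$ converge to the irrational Aubry--Mather set. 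Monotonicity and continuity of rotation numbers in $c$ (order property) then force $\partial\beta(h)$ to shrink to a single point.

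\emph{Generic non-differentiability at rational $h=p/q$.} Here $\beta$ fails to be differentiable exactly when the $(p,q)$-periodic minimizers do not form an invariant circle, i.e.\ when case (2.b) of Theorem \ref{ThmAubryMather} holds. In that case the heteroclinic configurations $\mathscr Q_{\varrho^\pm}$ exist, with a positive Peierls barrier in the gaps. This positive barrier gives $c^+-c^->0$, since $c$ can be moved while keeping the periodic minimizers minimal until the barrier $\int$ across a gap vanishes. The condition that the periodic minimizers do not form an invariant circle is open and dense (Mather's perturbation), and a countable intersection over all rationals gives genericity.
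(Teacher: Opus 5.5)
The paper does not prove this theorem; it quotes it from \cite{MR719055,MR1384394,MR967638}, so I am judging your proposal on its own.

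\textbf{What works.} Your strict-convexity argument is an acceptable outline. Step (i) is imprecise, though: cut-and-exchange only shows that two minimizers on $\R$ cross at most once, not that they never cross. It is also unnecessary. Once you invoke Mather's graph theorem in (ii) you are done: two Mather orbits with different rotation numbers have lifts that meet at some time by the intermediate value theorem, and the graph property then forces the orbits to coincide. Your rational-point paragraph is a reasonable sketch.

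\textbf{The gap.} The genuine gap is differentiability of $\beta$ at irrational $h$, which is the substantive content of item (1), and your paragraph on it contains no argument.
\begin{itemize}
\item That $\alpha$ has slope $h$ on $[c^-,c^+]$ is just a restatement of $[c^-,c^+]=D^-\beta(h)$.
\item The result you attribute to Bangert/Mather (single-valuedness of $D\alpha$ when the rotation number is unique) is the dual of strict convexity of $\beta$, i.e.\ of item (2). By duality it says nothing about the width of $(D\alpha)^{-1}(h)$.
\item Monotonicity and continuity of $D\alpha$, together with convergence of Mather sets along $p/q\to h$, are perfectly compatible with $D\alpha\equiv h$ on a nondegenerate interval.
\end{itemize}
A quick test shows the approach cannot work. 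Every ingredient you use (one Mather set for all $c$ in the interval, a monotone continuous $D\alpha$, approximation by nearby rationals) is equally present at a rational $p/q$ in case (2.b) of Theorem \ref{ThmAubryMather}, where $c^-<c^+$.

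\textbf{What is missing.} You need a mechanism that distinguishes an irrational Aubry--Mather set from finitely many periodic orbits separated by gaps. For $c<c'$ in $D^-\beta(h)$ one has $\alpha(c')-\alpha(c)=h(c'-c)$. Hence for a closed curve $\zeta$ on a time interval of integer length $T$ whose lift has displacement $\Delta$,
\[
S_{c'}(\zeta)=S_c(\zeta)-(c'-c)(\Delta-hT).
\]
Moreover $S_{c'}(\zeta)\geqslant 0$ for every such closed curve (iterate $\zeta$ and compare with the minimal average action). So it suffices to produce closed curves with $\Delta-hT\to1$ and $S_{c^-}(\zeta)\to 0$: then $0\leqslant S_{c^+}(\zeta)\to-(c^+-c^-)$, which forces $c^+=c^-$.

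At irrational $h$ such curves exist, roughly as follows.
\begin{itemize}
\item The translates $\tilde\gamma(\cdot+k)+j$ of a Mather orbit are ordered by $kh+j$, which is a dense set.
\item By unique ergodicity, two translates whose order values differ by $\varepsilon>0$ are, on time average, $\varepsilon$ apart. So there are times at which they are $O(\varepsilon)$-close, and their velocities are then $O(\varepsilon)$-close by the Lipschitz graph property.
\item Climb from $\tilde\gamma$ to $\tilde\gamma+1$ through about $1/\varepsilon$ switches between calibrated Mather orbits, each made at such a time. Each switch costs $O(\varepsilon^2)$ beyond the telescoping sum of a weak KAM solution, by the two-sided quadratic (semiconcave/semiconvex) estimate along the Aubry set used in the proof of Proposition \ref{pro:mather-lip}.
\item Close up at an integer time at which the orbit returns near its starting point in $\T$. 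The total cost is $O(\varepsilon)$ and $\Delta-hT\to1$.
\end{itemize}
At a rational with gaps the same climb must cross every gap, whose width never shrinks along periodic orbits. That costs a positive barrier, which is exactly why $c^+>c^-$ there. An argument of this kind, exploiting the vanishing cost of winding once around an irrational Aubry--Mather set, is indispensable. Without it, item (1) at irrationals remains unproved.
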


\subsubsection{Mather set and Aubry set}
\begin{defn}[Mather measure and Mather set]Suppose $c\in H^1(M,\R)$. The \textit{Mather measure} (for cohomology $c$) is the probability measure which is the minimizer of \eqref{eq:alpha}. As the subset of $\T\times TM$, the \textit{Mather set} for $c\in H^1(M,\R)$ is defined as the union of the support of all Mather measures $\mu_c$ for cohomology $c$
	\begin{align*}
		\tilde{\mathscr M}(c):=\bigcup\supp(\mu_c).
	\end{align*}
Moreover, we can also introduce the Mather set $\tilde{\mathscr M}_h$ associated to each rotation vector $h\in H_1(M,\R)$ as the union of the support of the measures attaining \eqref{eq:beta}. 

We denote $\mathscr M(c)$ and $\mathscr M_h$ as the projection of $\tilde{\mathscr M}(c)$ and $\tilde{\mathscr M}_h$ on $M$ and call them \textit{projected Mather sets} of cohomology $c$ and homology $h$ respectively.
\end{defn} The relation between the two notions of Mather sets is
\begin{align}\label{eq:mather-homology}
	\tilde{\mathscr M}_h=\bigcup_{c\in D^-\beta(h)}\tilde{\mathscr M}(c),\,\,\,\,\tilde{\mathscr M}(c)=\bigcup_{h\in D^-\alpha(c)}\tilde{\mathscr M}_h.
\end{align} 

Assuming that $\gamma:[t_0,t_1]\to M$ is absolutely continuous and $c\in H^1(M,\R)$ is a cohomology class, we denote \textit{the action of $\gamma$ associated to cohomology $c$} as
\begin{align}\label{eq:action with c}
	S_c(\gamma):=\int_{t_0}^{t_1}(L(t,d\gamma(t))-\eta_c(d\gamma(t))+\alpha(c))\,dt,
\end{align}where $\eta_c$ is a closed 1-form and $[\eta_c]=c$. 


\begin{defn}[Ma\~n\'e potential, semi-static curve and Ma\~n\'e set]Suppose $c\in H^1(M,\R)$. 
	\begin{enumerate}[\rm (1)]
		\item Let $(t_0,x_0),(t_1,x_1)\in\T\times M$. We denote by
		\begin{align*}
			\Psi_c((t_0,x_0),(t_1,x_1)):=\inf_{\tau_0-t_0,\tau_1-t_1\in\Z}\inf_{\substack{\gamma(\tau_0)=x_0,\gamma(\tau_1)=x_1}}S_c(\gamma)
		\end{align*}the free-time minimization action, or \textit{Ma\~n\'e potential}. 
	   \item A curve $\gamma:\R\to M$ is called $c$\textit{-semi static}, if 
	   \begin{align*}
	   	S_c(\gamma|_{[t_0,t_1]})=\Psi_c((t_0,\gamma(t_0)),(t_1,\gamma(t_1))),\,\,\,\,\forall t_0,t_1\in\R.
	   \end{align*}The \textit{Ma\~n\'e set} for cohomology class $c$ is defined as
	   \begin{align*}
	   	\tilde{\mathscr N}(c):=\bigcup\left\{(t,d\gamma(t)):\gamma\text{ is }c\text{-semi static},t\in\R\right\}\subset\T\times TM.
	   \end{align*}
	\end{enumerate}
\end{defn}
\begin{defn}[Static curve and Aubry set]\label{defn:Aubry}Suppose $c\in H^1(M,\R)$. A $c$-semi static curve is called \textit{$c$-static}, if 
\begin{align*}
	S_c(\gamma|_{[t_0,t_1]})=-\Psi_c((t_1,\gamma(t_1)),(t_0,\gamma(t_0)))
\end{align*}
	additionally holds true for any $t_0,t_1\in\R$. The \textit{Aubry set} for the cohomology class $c$ is defined as 
	\begin{align*}
		\tilde{\mathscr A}(c):=\left\{(t,d\gamma(t)):\gamma\text{ is }c\text{-static},t\in\R\right\}\subset\T\times TM.
	\end{align*}
\end{defn}
Likewise, $\mathscr N(c)=\pi(\tilde{\mathscr N}(c))$ (resp. $\mathscr A(c)=\pi(\tilde{\mathscr A}(c))$) is the projection of $\tilde{\mathscr N}(c)$ (resp. $\tilde{\mathscr A}(c)$) on $M$, called \textit{projected Ma\~n\'e }(resp. \textit{Aubry}) \textit{set}.
Some properties are known and we recall them below.
\begin{Pro}Suppose $c\in H^1(M,\R)$.
	\begin{enumerate}[\rm (1)]
        \item A $c$-static loop $\gamma$ satisfies $S_c(\gamma)=0$;
		\item $\tilde{\mathscr M}(c)\subset\tilde{\mathscr A}(c)\subset\tilde{\mathscr N}(c)$;
		\item $c\mapsto\tilde{\mathscr N}(c)$ is upper semi-continuous in the sense of Hausdorff topology.
	\end{enumerate}
\end{Pro}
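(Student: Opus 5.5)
We will prove the three items in order, using only the definitions of $\alpha$, $S_c$ and $\Psi_c$. The key preliminary fact is that $\Psi_c\geq 0$ on the diagonal, together with the triangle inequality for $\Psi_c$.

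\emph{Preliminaries.} First, concatenation of curves gives the triangle inequality
$$\Psi_c(a,b)\le \Psi_c(a,d)+\Psi_c(d,b),$$
because the time shifts allowed in the definition are integers and $L$ is $1$-periodic in $t$.

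Second, we claim $\Psi_c((t,x),(t,x))=0$.
\begin{itemize}
\item The upper bound $\Psi_c((t,x),(t,x))\le 0$ comes from taking the trivial curve with $\tau_1=\tau_0$.
\item For the lower bound, suppose a closed curve $\gamma$ on $[\tau_0,\tau_1]$, with $\tau_1-\tau_0\in\Z_{>0}$, satisfied $S_c(\gamma)<0$. Its periodic extension can be replaced by a minimizer in its homotopy class, and the resulting $\Phi_L$-invariant periodic probability measure $\mu$ would satisfy $\int (L-\eta_c)\,d\mu<-\alpha(c)$. This contradicts \eqref{eq:alpha}.
\end{itemize}

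\emph{Item (1).} Let $\gamma$ be a $c$-static loop on $[t_0,t_1]$, so $\gamma(t_1)=\gamma(t_0)$ and $t_1-t_0\in\Z$. Since $\gamma$ is semi-static, $S_c(\gamma)=\Psi_c((t_0,x),(t_1,x))$. Because $t_1-t_0\in\Z$, this is a diagonal value of $\Psi_c$, hence equals $0$ by the preliminary claim.

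\emph{Item (2).} We prove the two inclusions separately.
\begin{itemize}
\item $\tilde{\mathscr A}(c)\subset\tilde{\mathscr N}(c)$. We show static implies semi-static. For any curve, $S_c(\gamma|_{[t_0,t_1]})\ge\Psi_c(a,b)$ with $a=(t_0,\gamma(t_0))$ and $b=(t_1,\gamma(t_1))$. The triangle inequality and the diagonal identity give $\Psi_c(a,b)\ge-\Psi_c(b,a)$. The static condition says $-\Psi_c(b,a)=S_c(\gamma|_{[t_0,t_1]})$, so all these quantities are equal.
\item $\tilde{\mathscr M}(c)\subset\tilde{\mathscr A}(c)$. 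We follow Mather's argument. By ergodic decomposition it suffices to treat an ergodic Mather measure $\mu$. By Birkhoff's theorem, $\mu$-a.e. point is recurrent and has time average of $L-\eta_c+\alpha(c)$ equal to $0$; such points are dense in $\supp\mu$.

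Take such an orbit $\gamma$ and times $t_0<t_1$. By recurrence there are integer-shifted times $T_n\to\infty$ with $d\gamma(T_n)\to d\gamma(t_0)$. Close $\gamma|_{[t_1,T_n]}$ with a short connector of action $o(1)$. Then
$$S_c(\gamma|_{[t_0,t_1]})+\Psi_c\bigl((t_1,\gamma(t_1)),(t_0,\gamma(t_0))\bigr)\le S_c(\gamma|_{[t_0,T_n]})+o(1).$$
The right-hand side tends to $0$ along a subsequence, by the zero-average property. The reverse inequality $\ge 0$ is the triangle inequality. Hence $\gamma$ is static, and since the Aubry set is closed, this passes to the whole support.
\end{itemize}
The delicate point in this item is choosing $T_n$ so that the action along the returning segment is small. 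We will use the ergodic theorem applied to $L-\eta_c+\alpha(c)$ together with recurrence, in the form of Ma\~n\'e's lemma.

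\emph{Item (3).} Let $c_n\to c$ and $(t_n,d\gamma_n(t_n))\in\tilde{\mathscr N}(c_n)$ converge to some point.
\begin{itemize}
\item \emph{Compactness.} We expect the main obstacle to be a uniform a priori Lipschitz bound on $c$-semi-static curves for $c$ in a compact set. We will obtain it from superlinearity: comparing with the straight-line competitor over unit time intervals bounds the action, hence the velocity at some time, and the energy and flow estimates then propagate this to all times. With this bound, continuity of $\Phi_L$ gives $\gamma_n\to\gamma$ in $C^1_{loc}$, where $\gamma$ is an Euler–Lagrange solution.
\item \emph{Passing to the limit.} Since $\alpha$ is continuous, $S_{c_n}(\gamma_n|_{[t_0,t_1]})\to S_c(\gamma|_{[t_0,t_1]})$. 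We also need
$$\limsup_n\Psi_{c_n}(a_n,b_n)\le\Psi_c(a,b).$$
This holds because, for any fixed near-optimal competitor for $\Psi_c(a,b)$, we can attach short connectors to $a_n,b_n$ and use that the action is continuous in $c$.
\item \emph{Conclusion.} Combining these gives $S_c(\gamma|_{[t_0,t_1]})\le\Psi_c(a,b)$. The reverse inequality is automatic, so $\gamma$ is $c$-semi-static and the limit point lies in $\tilde{\mathscr N}(c)$.
\end{itemize}
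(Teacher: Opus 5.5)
The paper gives no proof of this proposition; it recalls it as known from Mather--Bernard theory, so your argument is a self-contained substitute. Items (1) and (2) are fine in outline, with three points to tighten:
\begin{itemize}
\item \emph{Mañé's lemma is genuinely needed.} Birkhoff's theorem only gives $S_c(\gamma|_{[t_0,T]})=o(T)$. Return times with both $d\gamma(T_n)\to d\gamma(t_0)$ and $S_c(\gamma|_{[t_0,T_n]})\to 0$ come from Ma\~n\'e's lemma (Atkinson's recurrence for zero-mean cocycles). Apply it to the time-one map with the time-$t_0$ slice of $\mu$, which is ergodic, so that $T_n-t_0\in\Z$.
\item \emph{No short connectors.} Admissible time shifts are integers, so you cannot append a connector of short duration at such a return time. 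Instead, deform the last unit-time piece $\gamma|_{[T_n-1,T_n]}$ so it ends at $\gamma(t_0)$; this costs $O(d(\gamma(T_n),\gamma(t_0)))$.
\item \emph{Compactness in (3).} The velocity bound propagates by completeness of the flow (Mather's a priori compactness), not by energy, which is not conserved for time-periodic $L$.
\end{itemize}
Also, $\tilde{\mathscr A}(c)\subset\tilde{\mathscr N}(c)$ is immediate from the paper's definition, since static curves are semi-static by definition.

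The genuine gap is in (3). It concerns your claim $\limsup_n\Psi_{c_n}(a_n,b_n)\le\Psi_c(a,b)$ in the case $a=b$ in $\T\times M$, i.e.\ $t_1-t_0\in\Z_{>0}$ and $\gamma(t_1)=\gamma(t_0)$.
\begin{itemize}
\item \emph{Why the connector argument fails.} With your convention, $\Psi_c(a,a)=0$ is attained by the zero-length curve, so there is nothing to deform. If $\gamma_n(t_0)\neq\gamma_n(t_1)$, every competitor for $\Psi_{c_n}(a_n,b_n)$ has integer length at least $1$.
\item \emph{The general inequality is false.} $\Psi_c$ is not upper semicontinuous at the diagonal. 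Take $c$ fixed and $a_n\neq b_n$ of the same time phase converging to a point $a$ outside the projected Aubry set. Then $\liminf_n\Psi_c(a_n,b_n)$ is at least the infimal $c$-action of loops at $a$ of positive integer length, which is strictly positive.
\item \emph{It is circular here.} Since $\Psi_{c_n}(a_n,b_n)=S_{c_n}(\gamma_n|_{[t_0,t_1]})\to S_c(\gamma|_{[t_0,t_1]})$, the inequality you invoke is exactly the statement $S_c(\gamma|_{[t_0,t_1]})\le 0$ that must be proved.
\item \emph{The case is not vacuous.} It occurs whenever the limit orbit closes up, e.g.\ when semi-static heteroclinics accumulate on a periodic orbit. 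Its content is then that this periodic orbit has zero $c$-action per period.
\end{itemize}

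The repair is short. Your deformation argument does prove $S_c(\gamma|_{[s_0,s_1]})\le\Psi_c\big((s_0,\gamma(s_0)),(s_1,\gamma(s_1))\big)$ whenever the two endpoints are distinct points of $\T\times M$. In that case any near-optimal competitor has positive length and can be deformed at its ends at vanishing cost. For $a=b$, set $s=t_0-\tfrac12$ and $e=(s,\gamma(s))$, which differs from $a$ in time phase. Then
\[ S_c(\gamma|_{[s,t_0]})+S_c(\gamma|_{[t_0,t_1]})=S_c(\gamma|_{[s,t_1]})\le\Psi_c(e,b)=\Psi_c(e,a)\le S_c(\gamma|_{[s,t_0]}). \]
Hence $S_c(\gamma|_{[t_0,t_1]})\le 0=\Psi_c(a,a)$. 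The reverse inequality holds because $\gamma|_{[t_0,t_1]}$ is itself a competitor.
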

Whenever no confusion arises, we also use \(\tilde{\mathscr M}(c)\) to denote its time-zero section, regarded as an invariant subset of \(T\mathbb T\) under the time-one map \(\Phi_L^{0,1}\).
\begin{Pro}[Lipschitz graph]\label{pro:mather-lip}
	There exists $K_0>1$ determined by Lagrangian $L$, such that for any $c\in[c_1,c_2]$, 
	\begin{align}\label{eq:equi-lip-graph}
		d_{T\T}((x,v),(y,w))\leqslant K_0\cdot d_{\T}(x,y),
	\end{align}
	where $(x,v),(y,w)\in \tilde{\mathscr M}(c)$. Thus, the projection  $\pi|_{\tilde{\mathscr M}(c)}: \tilde{\mathscr M}(c)\to {\mathscr M}(c)$ is an injection. 
\end{Pro}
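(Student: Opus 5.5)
The plan is to prove the Lipschitz graph property \eqref{eq:equi-lip-graph} by Mather's crossing (curve-shortening) argument for minimizers, with constants made uniform in $c\in[c_1,c_2]$ through a priori velocity bounds. First I will show that the set $\bigcup_{c\in[c_1,c_2]}\tilde{\mathscr M}(c)$ lies in a compact subset of $\T\times T\T$. The curves in its support are $c$-static, hence $c$-minimizers. Comparing the action of a minimizer on an interval of length one with that of a straight segment, and using the superlinearity (L2), bounds the average velocity on unit intervals. Since the Euler-Lagrange flow is continuous and $\alpha$ is continuous on $[c_1,c_2]$, this gives $|v|\leqslant C$ on all these Mather sets.

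Next, I will fix $(x,v),(y,w)\in\tilde{\mathscr M}(c)$ at time $0$. Let $\gamma_1,\gamma_2$ be the corresponding static curves, lifted to $\R$ so that $|x-y|=d_\T(x,y)$ is small. If $d_\T(x,y)$ is not small, the estimate is trivial from the velocity bound, once $K_0$ is chosen large enough. The core step is Mather's a priori estimate: there are $\epsilon>0$, $\delta>0$, $K$ depending only on $L$ and the compact set such that $|v-w|>K|x-y|$ with $|x-y|<\delta$ forces the curves $\gamma_1|_{[-\epsilon,\epsilon]}$ and $\gamma_2|_{[-\epsilon,\epsilon]}$ to cross transversally near $t=0$. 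Indeed, by the Taylor expansion in time, $\gamma_1(\pm\epsilon)-\gamma_2(\pm\epsilon)\approx (x-y)\pm\epsilon(v-w)$, and these have opposite signs. I will then swap the curves at the crossing and round the two corners. The standard estimate gives an action gain of order $\epsilon|v-w|^2$, which exceeds the error of order $|x-y|^2/\epsilon$ when $|v-w|\geqslant K|x-y|$. Here $C^2$ regularity and strict convexity (L1) are used uniformly on the compact set. This yields a pair of curves with the same endpoints and strictly smaller total $S_c$-action.

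It then remains to contradict minimality. Since both curves are $c$-static on $\T$ (one-dimensional), they are defined on $\R$, and the swap produces curves $\tilde\gamma_1$ from $\gamma_1(-\epsilon)$ to $\gamma_2(\epsilon)$ and $\tilde\gamma_2$ from $\gamma_2(-\epsilon)$ to $\gamma_1(\epsilon)$. I will concatenate with the static pieces, using $S_c(\gamma_i|_{[t_0,t_1]})=-\Psi_c(\cdot,\cdot)$ and the triangle inequality for $\Psi_c$. This gives
\begin{align*}
S_c(\gamma_1|_{[-\epsilon,\epsilon]})+S_c(\gamma_2|_{[-\epsilon,\epsilon]})\leqslant \Psi_c(\gamma_1(-\epsilon),\gamma_2(\epsilon))+\Psi_c(\gamma_2(-\epsilon),\gamma_1(\epsilon)),
\end{align*}
where the potentials are taken at the appropriate times. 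This inequality follows from the static property, since both points lie in the Aubry set and $\Psi_c$ restricted to the Aubry set behaves like a semi-distance with $\Psi_c(a,b)+\Psi_c(b,a)=0$ along static curves. It contradicts the strict decrease from the swap. Therefore $|v-w|\leqslant K|x-y|$, and $K_0:=\max\{K,2C/\delta,2\}$ works.

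The main obstacle I expect is the uniformity of the crossing estimate, namely the choice of $\epsilon,\delta,K$ independent of $c$. I plan to handle it by noting that $\eta_c$ can be taken constant, $\eta_c=c\,dx$. The term $c\,\dot\gamma$ integrates to boundary terms that coincide for the swapped pair, so the crossing estimate depends only on $L$ on a fixed compact set. Injectivity of $\pi|_{\tilde{\mathscr M}(c)}$ then follows immediately by setting $x=y$.
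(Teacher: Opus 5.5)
Your proof is correct, but it gets uniformity in $c$ by a different mechanism than the paper. The paper does not reprove the graph property. It cites Mather's Lipschitz graph theorem for each fixed $c$, and gets a $c$-independent constant from Bernard's result: the weak KAM solutions $u_c$ of $L-c\,v$ are semiconcave and semiconvex uniformly for $c\in[c_1,c_2]$, so $Du_c$ is uniformly Lipschitz on the Mather sets.

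You instead rerun Mather's crossing argument in the language of static curves. A violation $|v-w|>K\,d_\T(x,y)$ forces the two $c$-static curves to cross within time $\epsilon$. The swap-and-round construction then strictly lowers the total action. This contradicts the inequality you derive from $S_c(\gamma_i|_{[-\epsilon,\epsilon]})=-\Psi_c(\gamma_i(\epsilon),\gamma_i(-\epsilon))$, the triangle inequality and $\Psi_c(z,z)=0$; that chain of inequalities does close. Uniformity is then transparent: the $c\,dx$ terms are boundary terms that agree for the swapped pair, so $\epsilon,\delta,K$ depend only on $L$ and its flow on the a priori compact set.

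Each route has its advantages. Yours is self-contained, uses only objects already defined in the paper, makes the dependence of $K_0$ on $L$ explicit, and in fact gives the Lipschitz bound on all of $\tilde{\mathscr A}(c)$. The paper's is two lines and dimension-free, but imports the uniform semiconcavity estimates as a black box.

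A few points need care when you write yours out.
\begin{itemize}
\item In the crossing step, the remainder in $\gamma_1(\pm\epsilon)-\gamma_2(\pm\epsilon)=(x-y)\pm\epsilon(v-w)+R_\pm$ must be bounded by a small multiple of $\epsilon\,(|x-y|+|v-w|)$. This comes from the $C^1$ dependence of the flow on initial data. The naive bound $|R_\pm|=O(\epsilon^2)$ from $|\ddot\gamma_i|\leqslant C$ only yields a H\"older-$1/2$ estimate.
\item Once a genuine crossing is forced, you no longer need the gain-versus-error bookkeeping. A broken extremal with a corner is never a fixed-endpoint minimizer, which already gives the strict decrease.
\item In the a priori velocity bound, take the comparison segment on $\T$ between the projected endpoints. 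This uses that static curves minimize $S_c$ over all curves on $\T$ in every homotopy class. A fixed-endpoint comparison in the lift $\R$ does not bound the displacement.
\item Since $d_{T\T}$ also contains the base distance, the final constant should be $K_0=1+\max\{K,2C/\delta\}$.
\end{itemize}
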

\begin{proof}
	For any fixed $c\in[c_1,c_2]$, the Lipschitz graph theorem (see \cite{Mather1991}) implies that $\tilde{\mathscr M}(c)$ is a Lipschitz graph of its projection $\mathscr M(c)$. Since $[c_1,c_2]$ is a compact interval, and $L$ is Tonelli, the a priori compactness is given as
	$\tilde{\mathscr M}(c)\subset\mathcal K\subset\subset T\T$.
	The Lipschitz constant of the graph for any fixed $c\in[c_1,c_2]$ is dependent on the Lipschitz property of the differential of the backward weak KAM solution $u_c$ for the time-one Lax–Oleinik operator associated with $L-cv$. We can obtain from \cite{Bernard2008_1} that $u_c$ are both semiconcave and semiconvex uniformly to $c\in[c_1,c_2]$ on Lipschitz graph, which is equivalent to the fact that $Du_c$ is uniformly Lipschitz with respect to $c$. Consequently, we can find a uniform Lipschitz constant $K_0>0$ satisfying \eqref{eq:equi-lip-graph} for any $c\in[c_1,c_2]$.  
\end{proof}
\section{Completeness of devil's staircase in the hyperbolic case}\label{SComplete}
In this section, we consider the case of $M=\T$ equipped with flat metric and $L=L(t,x,v):\T\times T\T\to\R$ is a time 1-periodic nonautonomous  Tonelli Lagrangian. The Abelian covering space of $M$ is $\widetilde M=\R$. Recall that the (projected) Mather set of rotation number $h\in H_1(\T,\R)\cong\R$ is denoted by $\mathscr M_h$.

Without loss of generality, we always assume a rational number $p/q\in\Q$ satisfies $q>0$ and $\gcd(p,q)=1$.
For each rotation number $p/q\in\Q$, the set of all periodic minimal curves is not empty. In other words, 
\begin{align*}
	\mathscr M_{p/q}:=\left\{\gamma\in\mathscr A_{p/q}:\tilde\gamma(t+q)=\tilde\gamma(t)+p\right\}\neq\varnothing,
\end{align*}where $\tilde\gamma$ is the lift of $\gamma$ from $\T$ to $\R$. 
\subsection{The proof of main theorems on complete devil's staircase}
Before proceeding, we give a remark on Theorem \ref{ThmMain} to understand the proof of it.
\begin{Rem}\label{rem:mainthm-explain}
	Due to the strict convexity of $\beta$-function, $\alpha\in C^1(H^1(\T,\R))$ and then $D\alpha$ is continuous and monotone non-decreasing. As a result, for $[c_1,c_2]\subset H^1(\T,\R)$, $D\alpha([c_1,c_2])=[D\alpha(c_1),D\alpha(c_2)]$. Therefore, we focus on the case of rotation number $h\in[0,1]$ in the subsequent discussion without loss of generality. Moreover, assumptions in Theorem \ref{ThmMain} is equivalent to the uniform hyperbolicity of  $\tilde{\mathscr M}_h$ for each $h\in[0,1]$. For this purpose, we suppose $(D\alpha)^{-1}([0,1])=[c_1,c_2]$ for convenience. 
\end{Rem}
The main Theorem \ref{ThmMain} can be obtained basically from the following exponential flatness results of the $\beta$-function. Estimates of such type first appeared in Aubry's work \cite{10.1007/BFb0093512,MR1219348} in the setting of Frenkel-Kontorova model assuming strong hyperbolicity arised from the anti-integrability limit. The anti-integrability limit allows Aubry to use symbolic coding of orbits in the Cantorus, which is not available in general. 
It also appeared in \cite{MR3622271} in the setting of stable norms of Finsler metrics on $\T^2$, where some special properties of the Finsler metrics, such as the 1-homogeneity, are essentially used. In this section, we work in the more general setting of Tonelli Lagrangians. With the language of  modern Mather theory, we get a significantly concise proof. 

\begin{The}[Exponential flatness of the $\beta$-function]\label{thm:exp-flat-all}
	Suppose $\mathscr M_{p/q}$ is uniformly hyperbolic for Euler-Lagrange flow $\Phi_L$ with respect to all $p/q\in[0,1]\cap\Q$.
	Then there exist $C,\lambda>0$ depending on $L$, as well as $\varepsilon>0$ depending on $q$, such that for any $\delta\in (0,\varepsilon)$, we have
\begin{align}\label{eq:exp-flat-all}
	\beta(p/q+\delta)-\beta(p/q)-\beta_+'(p/q)\delta\leqslant Cq\delta e^{-\frac{\lambda}{4N_{[c_1,c_2]}q\delta}}.
\end{align}
 Here $\beta_+'$ denotes the right derivative of $\beta$ (see Appendix \ref{sec:convex}). 
More precisely, the constants mentioned above can be chosen as 
\begin{align*}
	C:=4(\Lip(L,K)(1+1/\lambda)+1)C_0N_{[c_1,c_2]},\,\,\,\varepsilon:=1/(4qN_{[c_1,c_2]}),
\end{align*}where $C_0$ and $\lambda$ are the uniformly hyperbolicity constants of Euler-Lagrange flow $\Phi_L$ for all $p/q\in[0,1]$ (see Proposition \ref{pro:structure}); $\Lip(L,K)$ is the Lipschitz constant of $L$ on some compact subset $K\subset \T\times T\T$ determined by bounded homology interval $[0,1]$; $N_{[c_1,c_2]}$ is a uniform upper bound of $\#(\mathscr M_{p/q})$ for $p/q\in\Q\cap D\alpha([c_1,c_2])$ (see Proposition \ref{pro:uni-num-bounded}).
\end{The}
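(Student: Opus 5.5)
We construct test orbits: for rotation number $p/q+\delta$, build a trial curve (pseudo-orbit) that shadows the periodic minimal orbits in $\mathscr M_{p/q}$ for long stretches and makes occasional heteroclinic jumps across one period. Then we estimate its action against $\beta(p/q)+\beta'_+(p/q)\delta$ using the hyperbolicity.

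Set $c_+ := \beta'_+(p/q)$, the right endpoint of the flat piece $D^-\beta(p/q)$, so that $\alpha(c_+)=c_+p/q-\beta(p/q)$. By convexity and \eqref{eq:beta-2},
\[
\beta(p/q+\delta)-\beta(p/q)-c_+\delta = \beta(p/q+\delta)-c_+(p/q+\delta)+\alpha(c_+)
\leqslant \liminf_{T\to\infty}\frac{1}{T}S_{c_+}(\gamma|_{[0,T]})
\]
for any curve $\gamma$ of rotation number $p/q+\delta$. It therefore suffices to find one such $\gamma$ whose average $c_+$-action is $O(q\delta e^{-\lambda/(4Nq\delta)})$.

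\textbf{Building blocks.} Because $c_+$ is the right endpoint, $\tilde{\mathscr N}(c_+)$ contains, besides the periodic orbits in $\mathscr M_{p/q}$ (at most $N=N_{[c_1,c_2]}$ of them by Proposition \ref{pro:uni-num-bounded}), heteroclinic $c_+$-semi-static orbits of type $\mathscr Q_{\varrho^+}$ from Theorem \ref{ThmAubryMather}. These connect each periodic orbit to the next one above it in the gap. Concatenating at most $N$ of them advances the lift by exactly $1$ relative to the periodic orbit, so the chain goes once around the gap cycle, and its total $c_+$-action is $0$ (semi-static chains whose endpoints are in the Aubry set). By uniform hyperbolicity (Proposition \ref{pro:structure}), each heteroclinic orbit converges exponentially, at distance $\leqslant C_0e^{-\lambda|t|}$, to its asymptotic periodic orbits.

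\textbf{Gluing.} Fix a large integer $m$. Truncate each heteroclinic piece at times $\pm mq$ around its transition, and splice consecutive pieces near the shared periodic orbit, where the endpoints are $C_0e^{-\lambda mq}$-close, using a linear interpolation over unit time. Since the velocities lie in the compact set $K$ given by Proposition \ref{pro:mather-lip}, each splice costs at most $\Lip(L,K)(1+1/\lambda)C_0e^{-\lambda mq}$ plus a comparable error from the $\eta_{c_+}$ term. One full cycle consists of at most $N$ pieces, has duration at most $2Nmq$, and gains one unit of displacement. Repeating the cycle with padding along periodic orbits gives average displacement $1/(2Nmq)$ per unit time beyond $p/q$. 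Choosing $m$ with $2Nmq\approx 1/\delta$ (possible when $\delta<\varepsilon=1/(4qN)$) gives rotation number $p/q+\delta$, after a small adjustment of the padding. The per-cycle action is then $\leqslant C e^{-\lambda mq}$, and dividing by the cycle length $1/\delta$ gives average $\lesssim C\delta e^{-\lambda/(4N\delta)}$. A floor-function loss in choosing $m$ accounts for the factors of $4$ and $q$ in the stated constants.

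\textbf{Main obstacle.} The delicate point is the precise bookkeeping needed to make the zero-action identity hold exactly. The sum of the $c_+$-actions of the full heteroclinic chain must vanish, and the truncated tails must be controlled by the exponential convergence rather than merely by continuity. I would first handle this using the Ma\~n\'e potential: $\Psi_{c_+}$ vanishes between points of the same static class and is Lipschitz. The splicing error is then bounded by $\Lip\cdot$distance, which is $\leqslant C_0e^{-\lambda mq}$.
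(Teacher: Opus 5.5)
Your overall plan is the paper's: splice truncated heteroclinics into a trial curve, cancel the total $c^+$-action, control splices and tails by exponential convergence, and reach general $\delta$ by convexity. Your $\liminf$ formulation of the upper bound for $\beta(p/q+\delta)$ is also fine. The gap is in the combinatorics of your chain for $q>1$, and that is exactly where the $q$ in the exponent comes from.

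\textbf{The chain of $\#(\mathscr M_{p/q})$ jumps does not gain one unit.} A $q$-periodic minimizer $\tilde\gamma$ of rotation number $p/q$ has $q$ distinct lifts $\tilde\gamma(\cdot+a)+b$ in $[\tilde\gamma,\tilde\gamma+1)$. So a fundamental domain contains $q\#(\mathscr M_{p/q})$ lifts, ordered as $\tilde\gamma^k_0<\tilde\gamma^k_1<\dots<\tilde\gamma^k_{\#(\mathscr M_{p/q})-1}<\tilde\gamma^{k+1}_0$. A chain of $\#(\mathscr M_{p/q})$ heteroclinics starting on $\tilde\gamma^k_0$ therefore ends on $\tilde\gamma^{k+1}_0=\tilde\gamma^k_0(\cdot+a)+b$ with $ap+bq=1$. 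This is the next lift of the same orbit, only $1/q$ higher on average.

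\textbf{Its total $c^+$-action is not zero.} Projected to $\T$, the endpoint is a different point of $\gamma_0$ in the time-zero section. With your truncation at multiples of $q$ this always happens, since $ap\equiv 1\pmod q$. So the total $c^+$-action of this chain tends to $\Psi_{c^+}$ between two distinct points of $\gamma_0$. That is the $c^+$-action of a segment of $\gamma_0$ whose length is not a multiple of $q$: it is $O(1)$ and in general nonzero. Thus both ``advances the lift by exactly 1'' and ``its total $c_+$-action is $0$'' fail.

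\textbf{Consequence for the bound.} Your bound $C\delta e^{-\lambda/(4N\delta)}$ does not follow from the construction. No floor-function loss can produce the missing $q$ in $\lambda/(4N_{[c_1,c_2]}q\delta)$. That $q$ is the genuine cost of needing $q\#(\mathscr M_{p/q})$ jumps, each of duration $2T$, to gain one unit of displacement.

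\textbf{The fix.} Use one heteroclinic for each of the $q\#(\mathscr M_{p/q})$ neighbouring pairs, truncated at the integer times $t_{j,k}\pm T$ for any integer $T$. Restricting to multiples of $q$ would, with the corrected count, only give $\delta=1/(2q^2\#(\mathscr M_{p/q})m)$, which does not reach all $\delta<1/(4qN_{[c_1,c_2]})$. The resulting loop closes on $\T$ after time $2q\#(\mathscr M_{p/q})T$ with displacement $2\#(\mathscr M_{p/q})Tp+1$. Hence $\delta=1/(2q\#(\mathscr M_{p/q})T)$ and the error is $O(q\#(\mathscr M_{p/q})\delta e^{-\lambda T})$, as in Lemmas \ref{lem:beta-delta-upperbound} and \ref{pro:exp-flat-rational-integer}.

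\textbf{The cancellation mechanism you propose is also not right.} Semi-staticity alone gives $S_{c^+}(\xi|_{[a,b]})=\Psi_{c^+}(\xi(a),\xi(b))$. For a chain whose junction points close up, the triangle inequality then only yields $\sum\Psi_{c^+}(x^-_{j,k},x^+_{j,k})\geqslant\Psi_{c^+}(x^-_{1,0},x^-_{1,0})=0$, which is the wrong direction for an upper bound on $\beta$. Also, $\Psi_{c^+}$ does not vanish between points of one static class; what holds there is antisymmetry, $\Psi_{c^+}(x,y)=-\Psi_{c^+}(y,x)$.

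The paper uses that the heteroclinics lie in $\tilde{\mathscr A}(c^+)$, i.e.\ are $c^+$-static. Then each $\lim_{n\to\infty}S_{c^+}(\tilde\xi^k_{p/q,j}|_{[t_{j,k}-T-nq,\,t_{j,k}+T+nq]})$ equals $\Psi_{c^+}(x^-_{j,k},x^+_{j,k})=-\Psi_{c^+}(x^+_{j,k},x^-_{j,k})$. The junction points match cyclically, including $x^+_{\#(\mathscr M_{p/q}),q-1}=x^-_{1,0}$, which again requires the full circuit of $q\#(\mathscr M_{p/q})$ pieces. Applying the triangle inequality forwards and backwards then forces the sum to be $0$.

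The truncated tails are compared with the asymptotic periodic orbits, whose $c^+$-action over $[r,r+nq]$ vanishes. This gives a contribution of order $\Lip(L,K)C_0\lambda^{-1}e^{-\lambda T}$ per tail. This tail estimate, not the splicing, is the source of the $1/\lambda$ in $C$.
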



The next result is due to Aubry, which gives a criterion to establish the completeness of the devil's staircase from the flatness estimate. 
\begin{Lem}[\cite{10.1007/BFb0093512,MR1219348}]\label{lem:var=0}
	Assume that $\phi$ is a strictly convex function defined on $\R$. If for all reduced fraction $p/q\in\Q$, we have
	\begin{align}\label{eq:u-flat}
		0<\phi(x)-\phi(p/q)-\phi_{+}'(p/q)(x-p/q)<u_q(x-p/q),\,\,\,\,\forall x\in(p/q,p/q+1/(4qN_{[c_1,c_2]})),
	\end{align}
 where $u_q$ satisfies the condition of convergence
	\begin{align}\label{eq:u-sum-finite}
		\sum_{q=1}^\infty q^{2+\nu}u_q\left(\frac{1}{q^{1+\nu}}\right)<\infty
	\end{align}
	for some $\nu\in (0,1)$, then 
	\begin{enumerate}[\rm (1)]
		\item $\Var(\phi_{+}',\R\setminus\Q)=0$;
		\item suppose $\psi$ is the convex conjugate of $\phi$, then $\psi$ has a singular continuous derivative function, i.e., $D\psi$ is (non-constant and) continuous and $D^2\psi=0$ almost everywhere in the sense of Lebesgue measure.
	\end{enumerate}
    \end{Lem}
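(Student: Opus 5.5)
The plan is to show that the Stieltjes measure $d\phi_+'$ gives zero mass to the irrationals. Part (2) then follows by passing to the inverse monotone function $D\psi$.

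\textbf{Step 1: one-sided increment bound.} By convexity, for $r>0$ small enough that $2r<1/(4qN_{[c_1,c_2]})$,
\begin{align*}
\phi_-'(p/q+r)-\phi_+'(p/q)\leqslant \frac{\phi(p/q+2r)-\phi(p/q)-2r\phi_+'(p/q)}{r}\leqslant \frac{u_q(2r)}{r}.
\end{align*}
The first inequality holds because the slope of the chord over $[p/q+r,p/q+2r]$ is at least $\phi_-'(p/q+r)$, while the chord from $p/q$ lies above the tangent line of slope $\phi_+'(p/q)$. Hence $d\phi_+'\big((p/q,p/q+r)\big)\leqslant u_q(2r)/r$.

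The statement of the lemma only gives flatness on the right. For the left intervals $(p/q-r,p/q)$ I need the mirror estimate $\phi(x)-\phi(p/q)-\phi_-'(p/q)(x-p/q)<u_q(p/q-x)$. I would obtain it by applying the hypothesis to $\tilde\phi(x)=\phi(-x)$. In the application this corresponds to time reversal of the Lagrangian, which gives the same exponential flatness bound at $-p/q$. This symmetric version is the point I would check most carefully; if it is not available it should be added as an assumption. The estimate for the left interval is the same as above, with $\phi_-'$ in place of $\phi_+'$.

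\textbf{Step 2: covering the irrationals.} Fix a bounded interval $J=[m,m+1]$. Set $r_q:=\tfrac12 q^{-1-\nu}$, so that $2r_q=q^{-1-\nu}$. By Dirichlet's theorem, every irrational $x$ has infinitely many reduced $p/q$ with $0<|x-p/q|<q^{-2}$, and $q^{-2}<r_q$ for $q$ large since $\nu<1$. Hence for every $Q$,
\begin{align*}
J\setminus\Q\subset\bigcup_{q\geqslant Q}\ \bigcup_{p/q\in J'}\Big((p/q-r_q,p/q)\cup(p/q,p/q+r_q)\Big),
\end{align*}
where $J'$ is a slight enlargement of $J$ containing at most $2q$ fractions with denominator $q$. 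The rational centres themselves are excluded, which is essential because that is where the atoms of $d\phi_+'$ sit.

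For $q$ large we also have $2r_q<1/(4qN_{[c_1,c_2]})$, so Step 1 applies. Summing gives
\begin{align*}
d\phi_+'(J\setminus\Q)\leqslant\sum_{q\geqslant Q}2\cdot 2q\cdot\frac{u_q(q^{-1-\nu})}{\tfrac12 q^{-1-\nu}}=8\sum_{q\geqslant Q}q^{2+\nu}u_q\big(q^{-1-\nu}\big).
\end{align*}
By \eqref{eq:u-sum-finite} this tail tends to $0$ as $Q\to\infty$. Taking a countable union over $m\in\Z$ gives (1): $d\phi_+'$ is purely atomic, carried by the rationals where $\phi_+'$ jumps.

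\textbf{Step 3: the conjugate.} Strict convexity of $\phi$ makes $\psi$ differentiable, with $D\psi$ continuous and monotone. $D\psi$ is the generalized inverse of $\phi_+'$: it equals $p/q$ on each closed interval $[\phi_-'(p/q),\phi_+'(p/q)]$. Consider any bounded interval $[a,b]$ of slopes, with $a=\phi_+'(x_0)$ and $b=\phi_+'(x_1)$. Its length is $b-a=d\phi_+'([x_0,x_1))$, which by Step 2 equals the sum of the jumps $\phi_+'(p/q)-\phi_-'(p/q)$ over rationals in $[x_0,x_1)$. That sum is exactly the total length of the plateaus of $D\psi$ inside $[a,b]$.

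So the plateaus have full Lebesgue measure, and $D^2\psi=0$ on their interiors, that is, almost everywhere. $D\psi$ is nonconstant because it takes every value $p/q$. This proves (2).

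I expect the main obstacle to be the one-sidedness in Step 1. The counting and the Dirichlet covering in Step 2 are routine.
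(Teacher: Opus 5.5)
\textbf{Gap: the left-sided estimate.} The gap is the one you flagged yourself, and neither of your repairs closes it. The hypothesis \eqref{eq:u-flat} controls $\phi$ only to the right of each $p/q$. For an abstract strictly convex $\phi$, the mirror bound with $\phi_-'$ cannot be derived from it. Time reversal may well give that bound for $\beta$ in the application, but it is not a hypothesis of the lemma, and adding it as an assumption proves a different statement. So Step 2, which uses Dirichlet's theorem to cover each irrational by intervals on both sides of the approximating fractions, is not justified as written.

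\textbf{The fix.} You never need the left intervals. The continued-fraction convergents $p_n/q_n$ of an irrational $x$ are reduced, alternate sides of $x$, and satisfy $|x-p_n/q_n|<1/(q_nq_{n+1})\leqslant q_n^{-2}$. The even convergents therefore give infinitely many reduced $p/q$ with $0<x-p/q<q^{-2}<r_q$. Hence the right intervals $(p/q,p/q+r_q)$ with $q\geqslant Q$ already cover $J\setminus\Q$. Only the right-sided estimate of Step 1, which you did prove, is used, and Step 2 goes through with the factor $2$ for the left intervals dropped. This is exactly what the paper does: it covers the irrationals by $(p/q,p/q+s/q^{1+\nu}]$ using approximants from below.

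\textbf{Comparison with the paper.} With that change your argument is correct and otherwise follows the paper's route. The one technical difference is how you turn an increment of $\phi_+'$ into the second-order remainder controlled by $u_q$.
You use the doubling estimate $\phi_-'(p/q+r)-\phi_+'(p/q)\leqslant u_q(2r)/r$.
The paper instead keeps the covering radius $s/q^{1+\nu}$ free and sums the increments $\phi_+'(p/q+s/q^{1+\nu})-\phi_+'(p/q)$. It then averages over $s\in[0,1]$ using $\int_0^1\phi_+'(p/q+s/q^{1+\nu})\,ds=q^{1+\nu}[\phi(p/q+q^{-1-\nu})-\phi(p/q)]$.
Your version is slightly more direct and fits naturally with the Stieltjes measure $d\phi_+'$. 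Step 3 is the paper's plateau count.

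\textbf{A small slip in Step 3.} Since $\phi_+'$ is right-continuous, $b-a=\phi_+'(x_1)-\phi_+'(x_0)=d\phi_+'((x_0,x_1])$, not $d\phi_+'([x_0,x_1))$. Correspondingly, the plateaus lying in $[a,b]$ are those of the rationals in $(x_0,x_1]$.
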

    We further improve the last result to the following. 
\begin{Lem}\label{lem:haus-dim=0}Under the conditions of Lemma \ref{lem:var=0}, 
    furthermore, if $u_q$ in \eqref{eq:u-flat} additionally satisfies $q^{1+\nu}u_q\left(\frac{2}{q^{1+\nu}}\right)\to 0$  as $q\to\infty$ and 
	\begin{align}
		\label{eq:u-sum-finite-theta}
		\sum_{q=1}^\infty q^{(1+\nu)\theta+1}u_q^\theta\left(\frac{1}{q^{1+\nu}}\right)<\infty,\,\,\,\,\forall\theta\in(0,1],
	\end{align}
	then the set of $y$ such that $D^2\psi$ does not exist has zero Hausdorff dimension. 
\end{Lem}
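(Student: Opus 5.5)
We prove the Hausdorff-dimension statement by a covering argument on the $y$-side, i.e. for $\psi$, the convex conjugate of $\phi$. The basic correspondence is this. Since $\phi$ is strictly convex, $D\psi=(\phi')^{-1}$ is continuous and non-decreasing. Every rational $p/q$ at which $\phi$ has a corner produces a plateau interval
\[
I_{p/q}:=[\phi'_-(p/q),\phi'_+(p/q)],
\]
on which $D\psi\equiv p/q$ and $D^2\psi=0$. Hence every point of the interiors of the $I_{p/q}$ is a point where $D^2\psi$ exists. The exceptional set $E$ of $y$ where $D^2\psi$ fails to exist is therefore contained in
\[
\Big(\R\setminus\bigcup_{p/q}\mathrm{int}\, I_{p/q}\Big)\cup\{\text{endpoints of the } I_{p/q}\}.
\]
The endpoints form a countable set, so they do not affect the dimension. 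By translation we may restrict to $y$ in a compact interval, corresponding to $x=D\psi(y)\in[0,1]$.

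The first step covers the image under $\phi'_+$ of the non-locked $x$. For each $q$ and each reduced $p/q\in[0,1]$, consider the $x$-interval $J_{p/q}:=(p/q,\,p/q+q^{-1-\nu})$. The set of $x\in[0,1]$ lying in infinitely many of the intervals $J_{p/q}$, or their mirror images on the left, is controlled by Dirichlet/Jarník. Its complement is handled differently: an $x$ that is eventually outside all $J_{p/q}$ with $q\ge Q$ is badly approximable beyond exponent $1+\nu$. This is the wrong direction, though, since Dirichlet guarantees every irrational lies in infinitely many $(p/q-q^{-2},p/q+q^{-2})\subset J_{p/q}\cup(\text{mirror})$ once $q^{-2}<q^{-1-\nu}$.

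So every irrational $x$ lies in infinitely many $J_{p/q}$, and $E\setminus(\text{countable})\subset \limsup_q \bigcup_p \phi'_+(J_{p/q}\setminus\{p/q\})$. We use the right derivative beyond the corner and the analogous left version. For any $Q$ this gives the cover
\[
E\subset \text{countable}\cup\bigcup_{q\ge Q}\bigcup_{p}\big[\phi'_+(p/q),\,\phi'_+(p/q+q^{-1-\nu})\big].
\]

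The second step estimates the lengths of these $y$-intervals from the flatness hypothesis \eqref{eq:u-flat}. Fix $x_0=p/q$ and set $g(\delta):=\phi(x_0+\delta)-\phi(x_0)-\phi'_+(x_0)\delta$, which is convex with $g\le u_q$. Convexity gives
\[
\phi'_+(x_0+\delta)-\phi'_+(x_0)\le \frac{g(2\delta)-g(\delta)}{\delta}\le \frac{u_q(2\delta)}{\delta}.
\]
With $\delta=q^{-1-\nu}$, the length of each covering interval is at most
\[
\ell_q:=q^{1+\nu}u_q\big(2q^{-1-\nu}\big),
\]
and by the extra hypothesis $\ell_q\to0$. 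This is exactly why that hypothesis is imposed: it makes the cover a $\rho$-cover for any prescribed $\rho$ once $Q$ is large. There are at most $q$ fractions with denominator $q$ in $[0,1]$. Hence for $\theta\in(0,1]$,
\[
\mathcal H^\theta_\rho(E)\le \sum_{q\ge Q} 2q\,\ell_q^\theta .
\]

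The third and main step is to match this sum with \eqref{eq:u-sum-finite-theta}. That hypothesis involves $u_q(q^{-1-\nu})$ rather than $u_q(2q^{-1-\nu})$, so we must compare them. I would first try to avoid the factor $2$ altogether by using the sharper convexity bound with $\delta/2$:
\[
\phi'_+(x_0+\delta/2)-\phi'_+(x_0)\le \frac{2g(\delta)}{\delta}.
\]
This requires running the cover with $J_{p/q}$ of half-length $\tfrac12 q^{-1-\nu}$, which Dirichlet still allows since $q^{-2}<\tfrac12 q^{-1-\nu}$ for large $q$. The lengths are then at most $2q^{1+\nu}u_q(q^{-1-\nu})$, so
\[
\sum_{q\ge Q}2q\,\big(2q^{1+\nu}u_q(q^{-1-\nu})\big)^\theta\lesssim \sum_{q\ge Q} q^{(1+\nu)\theta+1}u_q^\theta(q^{-1-\nu}),
\]
which is the tail of a convergent series. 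Hence $\mathcal H^\theta(E)=0$ for every $\theta\in(0,1]$, and $\dim_H E=0$.

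The delicate points are twofold. First, one must verify that the hypothesis $q^{1+\nu}u_q(2q^{-1-\nu})\to0$ indeed makes the diameters small, possibly by using the version with the $\delta/2$ trick. Second, one must check the treatment of the left-hand sides, using the mirrored flatness near $p/q$ from the left, or noting that only right neighbourhoods are needed after the symmetric Dirichlet argument, which forces one to assume left flatness as well. I expect this left/right bookkeeping to be the real obstacle. If only right flatness is available, I would instead cover $x\in(p/q-q^{-2},p/q)$ by right-intervals of the neighbouring Farey fraction with smaller denominator, in the style of Aubry's argument for Lemma \ref{lem:var=0}.
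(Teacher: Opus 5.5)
Your proof is correct and uses the same covering scheme as the paper. The paper also covers the exceptional set by the $y$-intervals $[\phi'_+(p/q),\phi'_+(p/q+s\,q^{-1-\nu})]$ with $q>Q$. It uses $q^{1+\nu}u_q(2q^{-1-\nu})\to0$ only to make this a $\delta$-cover, and counts at most $q$ numerators per denominator to reach the series in \eqref{eq:u-sum-finite-theta}.

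The one genuine difference is how interval lengths are converted into $u_q(q^{-1-\nu})$. The paper fixes $s\in(0,1)$ and bounds $\mathscr H^\theta_\delta(Y)$ by the sum for that $s$. It then integrates over $s\in[0,1]$ and applies Jensen's inequality to $t\mapsto t^\theta$, together with $\int_0^1\phi'_+(p/q+s\,q^{-1-\nu})\,ds=q^{1+\nu}[\phi(p/q+q^{-1-\nu})-\phi(p/q)]$. You instead take $s=\tfrac12$ and use the secant inequality $g'_+(\delta/2)\le 2g(\delta)/\delta$. This gives the same bound pointwise, up to a factor $2^\theta$, with no averaging. Your route is shorter, and in it the extra hypothesis becomes redundant: your diameters are at most $2q^{1+\nu}u_q(q^{-1-\nu})$, which tends to $0$ by the case $\theta=1$ of \eqref{eq:u-sum-finite-theta}.

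The left/right bookkeeping you single out as the real obstacle is not one, and no left flatness is needed. The symmetric Dirichlet statement you cite does not by itself give that every irrational lies in infinitely many $J_{p/q}$, but the one-sided version does. The even continued-fraction convergents of an irrational $x$ lie below $x$ and satisfy $0<x-p_{2k}/q_{2k}<q_{2k}^{-2}<\tfrac12 q_{2k}^{-1-\nu}$ for large $k$. This is exactly the one-sided approximation the paper invokes in the proof of Lemma \ref{lem:var=0}, so you should state it in place of the Dirichlet argument.

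Your Farey-neighbour fallback also works. If $0<p/q-x<q^{-2}$ and $p'/q'$ is the left Farey neighbour of $p/q$, then $q'<q$ and $0<x-p'/q'<1/(qq')<q'^{-2}$. Moreover $q'\to\infty$, because $p'/q'\to x$ and $x$ is irrational.
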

\begin{proof}[Proof of Theorem \ref{ThmMain} and \ref{thm:hausdorff-dim}]
    According to the assumptions in Theorem \ref{ThmMain} and \ref{thm:exp-flat-all}, $\beta$ satisfies \eqref{eq:exp-flat-all}. If we set $u_q(x):=Cqxe^{-\lambda/(4qx)}$, then $\beta$ holds \eqref{eq:u-flat}. Consequently, since $\beta$ is a strictly convex function by Theorem \ref{thm:beta-striconvex}, Lemma \ref{lem:var=0} claims that $\alpha$ as the convex conjugate of $\beta$ has a singular continuous derivative. Namely, $D\alpha$ is a complete devil's staircase. 
    Also, $u_q$ is applied Lemma \ref{lem:haus-dim=0}, which concludes Theorem \ref{thm:hausdorff-dim}.
\end{proof}

\begin{proof}[Proof of Theorem \ref{ThmStableNorm}]
    Theorem \ref{ThmStableNorm} follows from similar argument as above, provided we replace the exponential flatness Theorem \ref{thm:exp-flat-all} by the following result proved in \cite{MR3622271}.
\end{proof}
\begin{Lem}[\cite{MR3622271}]
    Let $h\in\R^2\setminus\{0\}$ with rational or infinite slope. Assume that the set of periodic minimal geodesics $\mathscr M_h$ is uniformly hyperbolic for the geodesic flow $\Phi_F$ with a Finsler metric $F$ on $\T^2$. Then there exist constants $C,\lambda,\varepsilon>0$, such that for all $v\in\R^2$ with Euclidean norm $|v|<\varepsilon$,
    \begin{align*}
        \sigma_F(h+v)-\sigma_F(h)-[\partial^+\sigma_F(h)](v)\leqslant C|v|e^{-\frac{\lambda}{|v|}}.
    \end{align*}
where $\partial^+\sigma_F$ is the forward directional derivative of $\sigma_F$ defined as
    \begin{align*}
    	[\partial^+\sigma_F(h)](v):=\lim_{t\to 0^+}\frac{\sigma_F(h+tv)-\sigma_F(h)}{t}.
    \end{align*}
\end{Lem}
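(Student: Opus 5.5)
The plan is to reduce, by positive $1$-homogeneity and convexity of $\sigma_F$, to an explicit upper bound at rational points near $h$, obtained from competitor closed curves that shadow the hyperbolic periodic minimal geodesics. Scaling $h$ by a positive constant changes both sides of the inequality only by harmless constants, so we may take $h$ to be a primitive integer class. Choose an integer class $e$ with $(h,e)$ a basis of $H_1(\T^2,\Z)$ and write $v=a h+b e$. The term $a h$ only contributes $a\sigma_F(h)$, which is linear, so the content of the estimate is in the transverse component $b$; we treat $b>0$ (the case $b<0$ is symmetric). Since both sides are continuous in $v$, it suffices to prove the bound for $h+v$ in $H_1(\T^2,\Q)$, and by homogeneity for classes $nh+ke$ with $k\geqslant 1$, $k/n\approx b$ small.

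\textbf{Step 1: finiteness and isolation.} By uniform hyperbolicity and compactness, $\mathscr M_h$ consists of finitely many periodic minimal geodesics $\gamma_1,\dots,\gamma_N$, cyclically ordered on $\T^2$. By Bangert's theory, between consecutive ones $\gamma_i,\gamma_{i+1}$ there exist minimal heteroclinic geodesics $\eta_i$ that are $\alpha$-asymptotic to $\gamma_i$ and $\omega$-asymptotic to $\gamma_{i+1}$. Hyperbolicity gives $d(\eta_i(t),\gamma_{i\pm1}(t+\tau))\leqslant C_0e^{-\lambda|t|}$, with suitable time shifts. Define the renormalized cost of $\eta_i$ as $\lim_{T\to\infty}\bigl(\ell(\eta_i|_{[-T,T]})-2T\bigr)$; this limit exists by the exponential convergence.

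\textbf{Step 2: identify the directional derivative.} Show that $[\partial^+\sigma_F(h)](v)=a\sigma_F(h)+bD$, where $D$ is the sum over $i$ of the renormalized heteroclinic costs, suitably normalized per unit of $e$. The lower bound $\sigma_F(h+v)\geqslant\sigma_F(h)+[\partial^+\sigma_F(h)](v)$ is automatic from convexity. The identification of $D$ comes from the matching upper bound in Step 3, which shows that the excess is $bD+o(|v|)$. Alternatively, the forward derivative is given by minimizing over one-sided Mather measures and Peierls barriers, and the two computations should agree.

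\textbf{Step 3: construct competitors.} Given $(n,k)$, build a closed broken geodesic in class $nh+ke$ as follows. Run along $\gamma_1$ for roughly $T\sim n/k$ periods, pass to $\gamma_2$ along a truncated $\eta_1$, run along $\gamma_2$ for roughly $T$ periods, and so on cyclically, making $k$ transitions in total. At each junction, a truncated heteroclinic is glued to a truncated periodic geodesic that lies within $C_0e^{-\lambda T}$ of it. Replacing each corner by a short segment costs $O(e^{-\lambda T})$ in length, and the truncations contribute an error of the same size relative to the renormalized cost, since the integrand converges exponentially. This gives
\[
\ell\leqslant n\sigma_F(h)+kD+Ck e^{-\lambda T}.
\]
Dividing by $n$, with $k/n\approx|v|$ and $T\approx c/|v|$, yields the stated bound $C|v|e^{-\lambda'/|v|}$.

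\textbf{Main obstacle.} The delicate step is the uniformity of the gluing estimate: the gluing error must be exponentially small in the shadowing time, and the corner corrections must neither change the homology class nor accumulate beyond $k\,O(e^{-\lambda T})$. I would first try to handle this with a local-product-structure (shadowing) argument near the hyperbolic set $\mathscr M_h$, which gives uniform constants $C_0,\lambda$ for the stable and unstable manifolds of the $\gamma_i$. Combined with the first-variation formula, this shows that the length defect at a corner is quadratic in the angle, which itself is $O(e^{-\lambda T})$.
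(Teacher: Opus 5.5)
The paper does not prove this lemma; it only cites it from Klempnauer--Schr\"oder. Its proof of Theorem~\ref{thm:exp-flat-all} is the direct analogue, and your Step~3 is the same construction as Lemma~\ref{lem:beta-delta-upperbound}. One bookkeeping correction there: to land in class $nh+ke$ you need $kN$ transitions (one pass through all $N$ gaps per unit of $e$), not $k$.

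The genuine gap is in Step~2. Step~3 only gives $\sigma_F(h+be)\leqslant\sigma_F(h)+bD+Cbe^{-\lambda/b}$. Dividing by $b$ and letting $b\to0^+$ yields $\partial^+\sigma_F(h)(e)\leqslant D$. So the defect in the lemma is bounded only by $b\,(D-\partial^+\sigma_F(h)(e))$ plus an exponentially small term. What you need is the reverse inequality $D\leqslant\partial^+\sigma_F(h)(e)$, i.e.\ a \emph{lower} bound $\sigma_F(h+be)\geqslant\sigma_F(h)+bD-o(b)$. No competitor construction can supply that, and ``the two computations should agree'' is exactly the missing argument. Moreover, $D$ is not actually defined: for unit-speed geodesics $\ell(\eta_i|_{[-T,T]})-2T\equiv0$. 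Any meaningful renormalized cost must subtract a linear functional of the displacement $\tilde\eta_i(T)-\tilde\eta_i(-T)$, so a cohomology class has to enter.

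The missing idea is to fix that class from the start, as the paper does with $c^+=\beta_+'(p/q)$. Recall $\partial^+\sigma_F(h)(v)=\max_{c\in\partial\sigma_F(h)}c(v)$ and $c(h)=\sigma_F(h)$ for $c\in\partial\sigma_F(h)$; this also justifies your reduction of the $ah$-component. Pick $c_+\in\partial\sigma_F(h)$ with $c_+(e)=\partial^+\sigma_F(h)(e)$. The periodic minimals and the heteroclinics crossing the gaps in the $+e$ direction lie in the Aubry set of $c_+$; this is the analogue of Mather's result invoked before Lemma~\ref{pro:exp-flat-rational-integer}. Hence they are $c_+$-static.

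Now measure every piece by $\ell-c_+(\Delta)$, where $\Delta$ is its displacement. Along a cyclic chain of static heteroclinics the Ma\~n\'e potentials sum to zero, by the triangle inequality together with $\Psi(x,y)=-\Psi(y,x)$ for points on static curves. The truncation tails are $O(e^{-\lambda T})$ by exponential convergence. Equivalently, you can telescope a $c_+$-calibrating function along the closed chain.

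This gives competitor length $c_+(nh+ke)+O(kNe^{-\lambda T})=n\sigma_F(h)+k\,\partial^+\sigma_F(h)(e)+O(kNe^{-\lambda T})$. That is the lemma, with no separate identification of a heteroclinic cost needed. So the real difficulty is this cancellation. The corner estimates you flag as the main obstacle are routine: the paper simply interpolates linearly on short time intervals, at cost $\Lip(L,K)C_0e^{-\lambda T}$ per junction.
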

The remainder of this section is devoted to the proof of Theorem \ref{thm:exp-flat-all}, Lemma \ref{lem:var=0} and \ref{lem:haus-dim=0}. 
\subsection{The exponential flatness of $\beta$-functions} In this subsection, our aim is to give the proof of Theorem \ref{thm:exp-flat-all}. The next proposition shows the structure of the Aubry set and the Mather set, which is an analog to Theorem \ref{ThmAubryMather}.
\subsubsection{The structure of $\mathscr M_{p/q}$} Before studying the properties of the $\beta$-functions, we give some quantitive analysis on the amount of orbits and the structure in $\mathscr M_{p/q}$ under hyperbolicity assumptions in Theorem \ref{ThmMain}. 
\begin{Pro}\label{pro:uni-num-bounded}
 Under assumptions in Theorem \ref{ThmMain}, there exists $N_{[c_1,c_2]}\in\N^*$ such that for any $p/q\in\Q\cap D\alpha([c_1,c_2])$, $\#(\mathscr M_{p/q})<N_{[c_1,c_2]}$.  
\end{Pro}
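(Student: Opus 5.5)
The plan is to bound the number of periodic minimal orbits in $\mathscr M_{p/q}$ by counting their intersections with the time-zero section. The count should be controlled by a uniform separation scale coming from hyperbolicity, together with the uniform Lipschitz graph property of Proposition \ref{pro:mather-lip}. We interpret $\#(\mathscr M_{p/q})$ as the number of distinct periodic minimal orbits of the time-one map $\Phi_L^{0,1}$ (each orbit has $q$ points on $\T$).

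\textbf{Step 1: a uniform expansivity scale.} Set $\Lambda=\Lambda_{[c_1,c_2]}$. By Remark \ref{rem:mainthm-explain}, $\Lambda$ is compact, since it lies in the compact set $\mathcal K$ of Proposition \ref{pro:mather-lip} and $c\mapsto\tilde{\mathscr M}(c)$ is upper semicontinuous. It is also uniformly hyperbolic for $f:=\Phi_L^{0,1}$. Standard hyperbolic theory then gives an expansivity constant $\epsilon_0>0$ on a neighborhood of $\Lambda$: two distinct full orbits staying in that neighborhood cannot remain $\epsilon_0$-close for all times. The same theory gives the shadowing lemma and the local product structure. Choose $\epsilon_0$ once for all $p/q\in D\alpha([c_1,c_2])$.

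\textbf{Step 2: periodic minimal orbits are uniformly separated.} Fix $p/q$ and let $\gamma_1,\gamma_2\in\mathscr M_{p/q}$ be distinct periodic minimal curves. By the total order property (Theorem \ref{ThmAubryMather}(3)), their lifts are strictly ordered and never cross, and $\gamma_1(0)\neq\gamma_2(0)$. We claim $d(\gamma_1(0),\gamma_2(0))\geqslant\epsilon_0/K_0$ for all such pairs. Suppose instead $|\gamma_1(0)-\gamma_2(0)|<\epsilon_0/K_0$. Using the ordering and the twist/Aubry crossing lemma (two ordered minimal periodic configurations with the same rotation number), the gap between corresponding points of the two orbits is controlled at all times $n$: in the Lipschitz graph picture the points $\gamma_i(n)$ stay within a uniformly bounded multiple of the initial gap. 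Proposition \ref{pro:mather-lip} then shows the lifted orbits stay $\epsilon_0$-close in $T\T$ for all times, contradicting expansivity. This step is the main obstacle, because ordering alone only bounds the gaps at time $n$ by a sum over gaps rather than by the initial gap. If the direct gap-comparison argument fails, I will use the hyperbolic local product structure instead. Two nearby periodic points $x_1,x_2$ of the hyperbolic set $\tilde{\mathscr M}_{p/q}$ would then produce a heteroclinic point $[x_1,x_2]$ inside $\tilde{\mathscr M}_{p/q}$ (heteroclinic minimal orbits lie in the Aubry/Ma\~n\'e set). The stable/unstable geometry of this point yields a crossing with one of the ordered periodic orbits, contradicting Theorem \ref{ThmAubryMather}(3).

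\textbf{Step 3: counting.} By Step 2, the points $\{\gamma(0):\gamma\in\mathscr M_{p/q}\}\subset\T$ are $\epsilon_0/K_0$-separated, so there are at most $K_0/\epsilon_0$ of them. Each periodic orbit contributes at least one point to this set at time $0$, and distinct orbits contribute disjoint points by injectivity of the projection. Hence $\#(\mathscr M_{p/q})<N_{[c_1,c_2]}:=\lfloor K_0/\epsilon_0\rfloor+1$, uniformly in $p/q$. In fact each orbit contributes $q$ points, which gives the sharper bound $q\,\#\mathscr M_{p/q}\leqslant K_0/\epsilon_0$, but the uniform bound suffices for the statement.
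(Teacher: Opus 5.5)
Your Step 1 is fine. The gap is in Step 2: the separation claim you set out to prove there is false, so neither of your two routes can establish it.

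A single orbit in $\mathscr M_{p/q}$ already has $q$ points on the time-zero section, and two of them lie within $1/q$ of each other. These are the time-$0$ values of the distinct minimal curves $\gamma$ and $\gamma(\cdot+k)$. Even if you only compare distinct orbits, two orbits with the same rotation number interlace on $\T$, so some point of one is within $1/(2q)$ of a point of the other. Your own ``sharper bound'' $q\,\#(\mathscr M_{p/q})\leqslant K_0/\epsilon_0$ exposes the problem: it would force $\mathscr M_{p/q}=\varnothing$ for $q>K_0/\epsilon_0$, while $\mathscr M_{p/q}\neq\varnothing$ for every $p/q$.

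The underlying confusion is that expansivity only says two distinct orbits are $\epsilon_0$-apart at \emph{some} time, not at time $0$. The gap between two adjacent minimal orbits genuinely grows and shrinks along the orbit, so no estimate of the form ``the gap at time $n$ is comparable to the initial gap'' can hold. The fallback fails too. The bracket point $[x_1,x_2]$ is not periodic, so it does not lie in $\tilde{\mathscr M}_{p/q}$, which for rational $p/q$ consists of periodic orbits. Moreover, heteroclinic minimal orbits between neighbouring periodic ones exist anyway (Proposition~\ref{pro:structure}) without crossing anything, so no contradiction arises.

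What is missing is a mechanism that turns ``far apart at some time'' into a packing statement at a single time; the paper supplies it as follows.
Take $N$ orbits, giving $Nq$ points $a_0,\dots,a_{Nq-1}$ on $\T$ in cyclic order.
Because $F=\pi\circ\Phi_L^{0,1}\circ(\pi|_Z)^{-1}$ preserves cyclic order (Aubry--Le Daeron), it acts as $a_i\mapsto a_{i+l}$. It therefore permutes the gaps $I_i=(a_i,a_{i+1})$ by the same rotation.
The gaps fall into $N$ cycles of length $q$. The $q$ gaps of one cycle are precisely the successive positions, at times $0,\dots,q-1$, of the gap between one adjacent pair of orbits.
Suppose every gap in some cycle were shorter than $\eta=\min\{1/2,\varepsilon_*/K_0\}$. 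Then that pair of orbits would stay $\varepsilon_*$-close in $T\T$ for all times by Proposition~\ref{pro:mather-lip}, contradicting Lemma~\ref{lem:stable-mani}.
Hence each of the $N$ cycles contains a gap of length at least $\eta$. These gaps are pairwise disjoint, so $N\eta\leqslant 1$.
This bounds the number of orbits $N$ rather than the number of points $Nq$, which is exactly the distinction your Step 3 loses.
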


\begin{Lem}\label{lem:stable-mani}
	There exists $\varepsilon_*>0$, such that: if for any $(x,v),(y,w)\in\Lambda_[c_1,c_2]$. 
	\begin{align}\label{eq:expansive}
		d_{T\T}\left((\Phi_{L}^{0,1})^n(x,v),(\Phi_{L}^{0,1})^n(y,w)\right)\leqslant \varepsilon_*,\,\,\,\,\,\,\forall n\in\Z,
	\end{align}
	then $(x,v)=(y,w)$. 
\end{Lem}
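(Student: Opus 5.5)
This is the standard expansivity property of a uniformly hyperbolic compact invariant set; the plan is to run the usual shadowing/expansivity argument for the time-one map $f:=\Phi_L^{0,1}$ on $\Lambda:=\Lambda_{[c_1,c_2]}$.

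\textbf{Step 1: compactness and invariance.} First check that $\Lambda$, viewed through its time-zero section, is a compact $f$-invariant set.
\begin{itemize}
\item Each $\tilde{\mathscr M}(c)$ is invariant, so $\Lambda$ is invariant.
\item The a priori compactness used in Proposition~\ref{pro:mather-lip} gives $\tilde{\mathscr M}(c)\subset\mathcal K$ uniformly in $c\in[c_1,c_2]$.
\item Closedness of $\Lambda$ follows from upper semicontinuity of $c\mapsto\tilde{\mathscr M}(c)$. This holds because a limit of minimizing measures is minimizing, by continuity of $\alpha$.
\end{itemize}
If closedness is delicate, replace $\Lambda$ by its closure. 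Uniform hyperbolicity extends to the closure by continuity of the invariant cone fields.

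\textbf{Step 2: hyperbolic splitting and local manifolds.} Use the hypothesis of Theorem~\ref{ThmMain} in the form of Remark~\ref{rem:mainthm-explain}: there is a continuous invariant splitting $T_z(T\T)=E^s_z\oplus E^u_z$ over $\Lambda$ with constants $C_0,\lambda$.

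By the stable manifold theorem for hyperbolic sets, there is $\varepsilon_1>0$ such that for $z\in\Lambda$ the local stable manifold
\[
W^s_{\varepsilon_1}(z)=\{w:\ d(f^n w,f^n z)\le\varepsilon_1\ \forall n\ge0\}
\]
is an embedded $C^1$ curve tangent to $E^s_z$. The analogous statement holds for $W^u_{\varepsilon_1}(z)$ with $n\le0$. Moreover:
\begin{itemize}
\item points of $W^s_{\varepsilon_1}(z)$ converge to $f^nz$ exponentially as $n\to+\infty$;
\item points of $W^u_{\varepsilon_1}(z)$ converge to $f^nz$ exponentially as $n\to-\infty$.
\end{itemize}

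\textbf{Step 3: expansivity.} Set $\varepsilon_*\le\varepsilon_1$ and let $z=(x,v)$, $w=(y,w)$ satisfy \eqref{eq:expansive}.
\begin{itemize}
\item The forward part of \eqref{eq:expansive} places $w$ in $W^s_{\varepsilon_*}(z)$.
\item The backward part places $w$ in $W^u_{\varepsilon_*}(z)$.
\end{itemize}
The angle between $E^s$ and $E^u$ is bounded below uniformly on the compact set $\Lambda$, by continuity of the splitting. Hence for $\varepsilon_*$ small the two local curves are uniformly transverse $C^1$ graphs over $E^s_z$ and $E^u_z$ respectively, and they meet only at $z$. Therefore $w=z$.

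Alternatively, one can avoid the manifold theory with a direct cone argument. Write $f^nw=\exp_{f^nz}(\xi_n)$ in local charts, where $|\xi_n|\le\varepsilon_*$. Split $\xi_n$ into its $E^s$ and $E^u$ components.
\begin{itemize}
\item Along a forward-bounded orbit the $E^u$ component must satisfy $|\xi^u_0|\le C_0e^{-\lambda n}\varepsilon_*+o(1)$, so it vanishes.
\item Symmetrically, the $E^s$ component vanishes by looking backward.
\end{itemize}
The nonlinear error is controlled by the uniform $C^1$ closeness of $f$ to $Df$ on $\varepsilon_*$-balls, which uses the $C^2$ regularity of $L$.

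\textbf{Main obstacle.} The main difficulty is the uniformity of $\varepsilon_*$ over the whole family $c\in[c_1,c_2]$. This is exactly why compactness of $\Lambda$ (Step 1) and uniform hyperbolicity constants are needed. Once $\Lambda$ is a single compact hyperbolic set, the uniform sizes of local invariant manifolds and the uniform transversality come for free. A secondary point: \eqref{eq:expansive} only assumes $z,w\in\Lambda$, but the argument above never needs $w\in\Lambda$ beyond using the orbit of $z$. Therefore nothing extra is required.
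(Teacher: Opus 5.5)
Your argument is correct and essentially matches the paper's proof. Both use uniform-size local stable and unstable manifolds, characterized by forward and backward closeness of orbits, and both show these meet only at the base point because they are uniformly transverse graphs; the paper phrases this as Lipschitz graphs with constant $\kappa<1$ in adapted coordinates and takes $\varepsilon_*=\delta/2$.

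One caution about Step 1. Your main justification of closedness does not work: weak$^*$ limits of minimizing measures do not control supports. In fact $\Lambda_{[c_1,c_2]}$ is typically not closed. As $c\downarrow c^+=\beta_+'(p/q)$, the Mather sets accumulate on the heteroclinic orbits in $\tilde{\mathscr A}(c^+)$, and these lie in no Mather set. So it is your fallback of passing to the closure, which is still uniformly hyperbolic, that makes the argument go through. That fallback also supplies the compactness the paper asserts without proof.
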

%
%
%
%
%
%
%
\begin{proof}
	Since \(\Lambda_{[c_1,c_2]}\) is a compact uniformly hyperbolic invariant set for the \(C^1\) diffeomorphism \(\Phi_{L}^{0,1}\), the uniform Hadamard–Perron theorem (\cite{Katok_Hasselblatt_book}), applied to the bi-infinite sequence of local coordinate representations of \(\Phi_{L}^{0,1}\) along the orbit $\{[\Phi_{L}^{0,1}]^n(x,v)\}_{n\in\Z}$ and, for the unstable manifolds, to the corresponding inverse sequence, gives constants $\rho>0,$ and $\delta>0$, independent of \((x,v)\in\Lambda_{[c_1,c_2]}\), together with local stable and unstable disks $W_\rho^s(x,v)$ and $W_\rho^u(x,v)$. 
More precisely, the dynamical characterization in the Hadamard–Perron theorem gives
\begin{align*}
	d_{T\mathbb T}\left([\Phi_{L}^{0,1}]^n(y,w),[\Phi_{L}^{0,1}]^n(x,v)\right)<\delta,\,\,\,\,\,\,\forall n\geqslant 0
\end{align*}
only if $(y,w)\in W_\rho^s(x,v)$, and similarly,
\begin{align*}
	d_{T\mathbb T}\left([\Phi_{L}^{0,1}]^{n}(y,w),[\Phi_{L}^{0,1}]^{n}(x,v)\right)<\delta \,\,\,\,\,\,\forall n\leqslant 0
\end{align*}
only if $(y,w)\in W_\rho^u(x,v)$. The uniformity of \(\rho\) and \(\delta\) with respect to \(x\) follows from the compactness of \(\Lambda_{[c_1,c_2]}\) and the uniform hyperbolicity of the splitting $T_{\Lambda_{[c_1,c_2]}}(T\mathbb T)=E^u\oplus E^s$.

After decreasing \(\rho\) uniformly if necessary, these local disks have the unique-intersection property
\begin{align*}
	W_\rho^s(x,v)\cap W_\rho^u(x,v)=\{(x,v)\},\,\,\,\,\,\,\forall (x,v)\in\Lambda_{[c_1,c_2]}.
\end{align*}
Indeed, in local coordinates adapted to $T_{(x,v)}(T\mathbb T)=E_{(x,v)}^u\oplus E_{(x,v)}^s$, the local disks supplied by the Hadamard–Perron theorem can be written as graphs
\begin{align*}
	W_\rho^s(x,v) = \left\{\bigl(h_{(x,v)}^s(\mathbf v),\mathbf v\bigr): \mathbf v\in E_{(x,v)}^s,\ \|\mathbf v\|<\rho \right\},\quad W_\rho^u(x,v) = \left\{ \bigl(\mathbf u,h_{(x,v)}^u(\mathbf u)\bigr): \mathbf u\in E_{(x,v)}^u,\ \|\mathbf u\|<\rho \right\},
\end{align*}
where $h_{(x,v)}^s(0)=0$, $h_{(x,v)}^u(0)=0$ and after a uniform reduction of \(\rho\),
$\Lip(h_{(x,v)}^s),\Lip(h_{(x,v)}^u)\leqslant \kappa\in(0,1)$
independently of \((x,v)\). If a point with local coordinates \((u,v)\) belongs to both local disks, then $\mathbf u=h_{(x,v)}^s(\mathbf v)$ and $\mathbf v=h_{(x,v)}^u(\mathbf u)$.
Consequently, $\|\mathbf u\| \leqslant\kappa\|\mathbf v\| \leqslant\kappa^2\|\mathbf u\|$. Since \(\kappa<1\), this implies \(\mathbf u=\mathbf v=0\). Hence the only common point of the two local disks is \((x,v)\).

Now set $\varepsilon_*:=\frac{\delta}{2}$. Suppose that \((x,v),(y,w)\in\Lambda_{[c_1,c_2]}\) satisfy
\begin{align*}
	d_{T\mathbb T}\bigl([\Phi_{L}^{0,1}]^n(y,w),[\Phi_{L}^{0,1}]^n(x,v))<\varepsilon_*<\delta,\,\,\,\,\,\,\forall n\in\mathbb Z. 
\end{align*}
For every \(n\geq0\), we then have $(y,w)\in W_\rho^s(x,v)$ by the forward dynamical characterization of the local stable disk, Likewise, for every \(n\geqslant 0\), we can also that $(y,w)\in W_\rho^u(x,v)$. Therefore, $(y,w)\in W_\rho^s(x,v)\cap W_\rho^u(x,v)=\{(x,v)\}$ and hence $(x,v)=(y,w)$. 
\end{proof}
\begin{proof}[Proof of Proposition \ref{pro:uni-num-bounded}]
	We arbitrarily choose $N\in\N$ periodic minimal orbits in $\tilde{\mathscr M}_{p/q}\subset \tilde{\mathscr M}(c)$ for some $c\in[c_1,c_2]$ (by \eqref{eq:mather-homology}). We aim to claim that such amount $N$ must be less than some integer independent to the rational rotation numbers associated to cohomology in $[c_1,c_2]$. Since each orbit is $q$-periodic, there are exactly $q$ points on the time-zero section in the phase space. Then $Nq$ points form on the time-zero section by $N$  $q$-periodic orbits of $\tilde{\mathscr M}_{c}$. We denote set $Z\subset\tilde{\mathscr M}(c)$ the union of these $Nq$ points and define set $A:=\pi Z\subset\mathscr M(c)$ given by projection $\pi:T\T\to\T$. Thus, $\#Z=\#A=Nq$.
	
	Consider the map
	\begin{align*}
		F:=\pi\circ \Phi_{L}^{0,1}\circ(\pi|_{Z})^{-1}:A\to A.
	\end{align*}
	Then $F$ is a permutation of $A$, i.e., $F$ is a bijection. Furthermore, $F$ also keeps the cyclic order because of the Aubry-Le Daeron crossing lemma (\cite{MR719634}). If we index the points in $A:=\{a_0,a_1,\cdots,a_{Nq-1}\}$ with counterclockwise order in $\T$, then we assert that there exists $l\in\N$ such that $F(a_i)=a_{i+l}$ with $i,i+l\mod Nq$. Moreover, owing to the $q$-periodicity of orbits, for $\pi(z_i)=a_i$, $F^q(a_i)=\pi([\Phi_{L}^{0,1}]^q(z_i))=\pi(z_i)=a_i$. Conversely, since $\gcd(p,q)=1$, $q$ is the minimal period of orbits in $\mathscr M_{p/q}$, which means the cycle decomposition of $F$ consists $N$ disjoint $q$-cycles. 
	
	The points of $A$ separate $\T$ into $Nq$ intervals $I_i:=(a_i,a_{i+1})_{\T}$ with $i=0,1,\cdots,Nq-1$. Then $\sum_{i=0}^{Nq-1}|I_i|=1$. Because $F(a_i)=a_{i+l}$ and $F(a_{i+1})=a_{i+1+l}$, we define $\sigma(I_i):=I_{i+l}$. Accordingly, $\sigma^q(I_{i})=I_{i+ql}=I_i$ and $Nq$ intervals are partitioned into $N$ cycles of intervals, each of length $q$. 
	
	Choose $\eta:=\min\{1/2,\varepsilon_*/K_0\}$. Now we assume, for the sake of contradiction, that $q$ intervals in one interval cycle has length less than $\eta$. For example, $0<|I_i|<\eta$ means that for $a_i=\pi z_i$ and $a_{i+1}=\pi z_{i+1}$, $a_i\neq a_{i+1}$, as well as for all $n\in\Z$, $|\sigma^n(I_i)|<\eta$ by assumption. Owing to $\eta<1/2$, 
	\begin{align*}
		d_{\T}(\pi[\Phi_L^{0,1}]^n(z_i),\pi[\Phi_L^{0,1}]^n(z_{i+1}))<\eta,\,\,\,\,\,\,\forall n\in\Z.  
	\end{align*}
	This is because the length of an arc with a length less than \(1/2\) is exactly equal to the Riemann distance on the circle between the two endpoints. Recall that $[\Phi_L^{0,1}]^{n}(z_i),[\Phi_L^{0,1}]^{n}(z_{i+1})\in\tilde{\mathscr M}(c)$, then as a result of Proposition \ref{eq:equi-lip-graph}, for all $n\in\Z$, 
	\begin{align*}
		d_{T\T}([\Phi_L^{0,1}]^{n}(z_i),[\Phi_L^{0,1}]^{n}(z_{i+1}))\leqslant K_0\cdot d_{\T}(\pi[\Phi_L^{0,1}]^n(z_i),\pi[\Phi_L^{0,1}]^n(z_{i+1}))\leqslant K_0\cdot\eta<\varepsilon_*.
	\end{align*}
	Applying Lemma \ref{lem:stable-mani}, we know that $z_i=z_{i+1}$, which leads to a contradiction since $a_i\neq a_{i+1}$. Consequently, at least one interval in each interval cycle is longer than $\eta$. 
	
	Since we have $N$ interval cycles, we can find interval $J_k$ with length $|J_k|\geqslant \eta$ for $k=1,\cdots, N$. These $J_k$ are from different interval cycles then they are pairwise intersect. Meanwhile,
	\begin{align*}
		N\eta=\sum_{k=1}^N \eta\leqslant \sum_{k=1}^N |J_k|\leqslant 1.
	\end{align*}
	$N\leqslant \lfloor\eta^{-1}\rfloor$ which is the desired bound. Note that the definition of $\eta$ is only associated to the Lagrangian $L$ and its Euler-Lagrange flow $\Phi_L$ on a compact subset of $TM$ obtained from the compactness of $[c_1,c_2]$. This fact allows us to get a uniform upper bound $N_{[c_1,c_2]}:=\lfloor\eta^{-1}\rfloor+1$ of $\#(\mathscr M_{p/q})$ for $p/q\in\Q\cap D\alpha([c_1,c_2])$.
\end{proof}

\begin{Pro}\label{pro:structure}
	Suppose $\#(\mathscr M_{p/q})<\infty$. Let $\gamma_{p/q,j}\in\mathscr M_{p/q}$, $j=0,\cdots,\#(\mathscr M_{p/q})-1$, be the $q$-periodic minimal orbits on $\T$ with order $\gamma_{p/q,j}<\gamma_{p/q,j+1}$ and $\tilde\gamma_{p/q,j}^{k}$, $k\in\N^*$ be the lifts of $\gamma_{p/q,j}$ from $\T$ to $\R$ with order $\tilde\gamma_{p/q,j}^{k}<\tilde\gamma_{p/q,j+1}^{k+1}$. Under assumptions of Theorem \ref{thm:exp-flat-all}, there exists a heteroclinic $\tilde\xi_{p/q,j}^{k+1}$, which satisfies $\tilde\gamma_{p/q,j}^{k}<\tilde\xi_{p/q,j+1}^{k}<\tilde\gamma_{p/q,j+1}^{k}$ and converges forward to $\tilde\gamma_{p/q,j+1}^{k+1}$ and backward to $\tilde\gamma_{p/q,j}^{k}$ exponentially for each $k\in\N$.  
\end{Pro}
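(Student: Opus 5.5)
Because the ordered family $\gamma_{p/q,j}$ is finite (by Proposition \ref{pro:uni-num-bounded} in our setting), every gap between two consecutive periodic minimizers contains heteroclinic minimizers, and hyperbolicity makes their convergence exponential. The plan is to combine the Aubry--Mather classification (Theorem \ref{ThmAubryMather}(2.b)) with the uniform hyperbolicity of $\tilde{\mathscr M}_{p/q}$. Concretely, fix $j$ and $k$ and consider the gap on $\R$ between the consecutive lifts $\tilde\gamma_{p/q,j}^{k}$ and $\tilde\gamma_{p/q,j+1}^{k}$. These lifts are neighbours in the totally ordered set of lifts of $\mathscr M_{p/q}$, so no periodic minimizer lies strictly between them. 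This is exactly a gap of $\mathscr Q_{\varrho^0}$ in the sense of Theorem \ref{ThmAubryMather}(2.b)(i).

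To obtain the heteroclinic itself, I would use the classical construction rather than merely quoting (2.b)(ii). Minimize the action $S_c$ (with $c\in D^-\beta(p/q)$, or take the endpoint cohomology $c^+$ of the flat piece) over curves on $[-T,T]$ whose lifts go from a point of $\tilde\gamma_{p/q,j}^{k}$ to a point of $\tilde\gamma_{p/q,j+1}^{k+1}$, the latter being the shift by one period index, and whose graphs are constrained to the strip between the two consecutive minimizers. Aubry's crossing lemma, as used in Proposition \ref{pro:uni-num-bounded}, shows that the minimizers do not touch the boundary curves; otherwise one could cut and paste to strictly lower the action. A priori Lipschitz bounds for Tonelli minimizers give compactness, so letting $T\to\infty$ along a subsequence yields a $c$-semi-static curve $\tilde\xi$ lying strictly inside the gap.

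The next step is to show that $\tilde\xi$ is asymptotic to the two boundary orbits. By monotonicity from the total order, $\tilde\xi(t+q)-p$ compared with $\tilde\xi(t)$ has a fixed sign, so the limits along $t\to\pm\infty$ of $\tilde\xi(t+nq)-np$ exist. These limits are $q$-periodic minimizers in the closed gap, hence must be the boundary curves; the choice of endpoints forces backward convergence to $\tilde\gamma_{p/q,j}^{k}$ and forward convergence to the upper one. The ordering $\tilde\gamma_{p/q,j}^{k}<\tilde\xi<\tilde\gamma_{p/q,j+1}^{k}$ follows from the strip constraint and non-crossing.

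Exponential convergence is the main obstacle, and it is where hyperbolicity enters. Once $d\tilde\xi(t)$ is close to the periodic orbit for $t$ large, I would invoke the local stable manifold from the Hadamard--Perron argument in Lemma \ref{lem:stable-mani}. A forward orbit staying within $\delta$ of the orbit of $(x,v)$ lies in $W^s_\rho(x,v)$, so the distance decays like $C_0e^{-\lambda t}$; backward convergence is handled symmetrically with $W^u_\rho$. To pass from $C^0$ closeness of configurations to phase-space closeness, I would use Proposition \ref{pro:mather-lip}, or an a priori Lipschitz estimate on the Mañé set together with upper semicontinuity of $\tilde{\mathscr N}(c)$. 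The delicate point is that $\tilde\xi$ lies in $\tilde{\mathscr N}(c)$ rather than in $\Lambda_{[c_1,c_2]}$. I expect to handle this by extending hyperbolicity to a neighbourhood of the compact hyperbolic set and applying the shadowing or stable-manifold theorem for points near it, which keeps the constants $C_0,\lambda$ uniform in $p/q$.
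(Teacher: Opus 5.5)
\textbf{Verdict.} Your proposal has the same overall structure as the paper's proof, but the limiting step of your heteroclinic construction is not justified, and that is a genuine gap. The shared structure is: order the lifts by the crossing lemma, put a heteroclinic in each gap between neighbouring lifts, and get the exponential rate from the local stable and unstable manifolds of the hyperbolic periodic orbits.

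\textbf{Where the routes differ.} The paper takes the heteroclinic ready-made from Aubry--Mather theory: Theorem \ref{ThmAubryMather}(2.b)(ii) and Mather's description of $\tilde{\mathscr A}(c^+)$. You instead re-derive it by minimizing in the strip over windows $[-T,T]$.

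\textbf{The gap.} Strict interiority of the window minimizers $\xi_T$ does not pass to the limit. If their transition from the lower to the upper orbit happens at times drifting to $\pm\infty$, the $C^1_{\mathrm{loc}}$ limit is just one of the boundary orbits.
\begin{itemize}
\item Recentring with the deck translations $(t,x)\mapsto(t+mq,x+mp)$, which preserve the strip, so that $\xi_T$ crosses a fixed intermediate level at a time in $[0,q)$, is necessary but not sufficient.
\item You must also exclude that the transition stays at bounded distance from one end of the window. Otherwise the recentred windows exhaust only a half-line, and the limit is a ray issuing from the lower orbit, not a heteroclinic.
\item The ordering arguments you use do not rule this out; it needs an additional action comparison.
\end{itemize}

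\textbf{A cleaner repair.} Approximate by minimal configurations of rotation numbers $\omega_n\downarrow p/q$, for instance taken from $\tilde{\mathscr M}(c_n)$ with $c_n\downarrow c^+$.
\begin{itemize}
\item These are bi-infinite, meet each boundary orbit exactly once, and stay within a uniform distance of $t\mapsto\omega_n t$.
\item Hence a limit that meets a boundary orbit at a finite time either coincides with it, which the normalization excludes, or is a rotation-number-$p/q$ minimal configuration crossing a periodic one, which contradicts Theorem \ref{ThmAubryMather}(3).
\item Upper semicontinuity of $c\mapsto\tilde{\mathscr N}(c)$ then places the limit at least in $\tilde{\mathscr N}(c^+)$.
\end{itemize}
This matters beyond existence. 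A limit of fixed-endpoint minimizers is a priori only a minimal configuration, so your claim that it is $c$-semi-static is unsupported. The paper later uses $\tilde\xi\subset\tilde{\mathscr A}(c^+)$ in Lemma \ref{pro:exp-flat-rational-integer}. The simplest fix is to quote the classical existence result, as the paper does.

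\textbf{Indexing.} The upper endpoint of your strip problem must be the actual neighbour, $\tilde\gamma^k_{p/q,j+1}$, or $\tilde\gamma^{k+1}_{p/q,0}$ in the wrap-around case, as in \eqref{eq:exp-convergence-1}--\eqref{eq:exp-convergence-2}. The indices in the statement are garbled.

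\textbf{The exponential part.} Here you are doing what the paper does in one line, and the ``delicate point'' you flag is not an obstacle.
\begin{itemize}
\item The Hadamard--Perron characterization in the proof of Lemma \ref{lem:stable-mani} requires only the base point $(x,v)$ to lie in the hyperbolic set. Any $(y,w)$ whose forward orbit stays $\delta$-close to that of $(x,v)$ lies in $W^s_\rho(x,v)$. So no extension of hyperbolicity to a neighbourhood, and no shadowing, is needed for the rate.
\item Passing from $C^0$ closeness to phase-space closeness follows from uniform $C^2$ bounds on Euler--Lagrange solutions in a compact region. Proposition \ref{pro:mather-lip} concerns Mather sets and does not apply to $\tilde\xi$.
\item Your remedy does not deliver the uniformity of $C_0$ in $p/q$ that you claim. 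Relative to a reference time $t_{j,k}$, $C_0$ is of order $e^{\lambda(t_+-t_{j,k})}$, where $t_+$ is the time after which $d\tilde\xi$ stays $\delta$-close to the upper orbit (and symmetrically in the past). Uniformity therefore needs a bound on this transit time, which neither neighbourhood hyperbolicity nor shadowing provides.
\end{itemize}
The statement itself only asks for exponential convergence, so this last point does not affect it. It is, however, what the later use of \eqref{eq:exp-convergence-1} relies on.
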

\begin{proof}
	Invoking the crossing lemma by Aubry-Le Daeron (\cite{MR719634}), we obtain that $\tilde\gamma_{p/q,j}^k$ as the lifts of $\gamma_{p/q,j}$, are totally ordered:
	\begin{align*}
		\cdots<\tilde\gamma_{p/q,0}^{k}<\tilde\gamma_{p/q,1}^{k}<\cdots<\tilde\gamma_{p/q,j-1}^{k}<\tilde\gamma_{p/q,j}^{k}<\cdots<\tilde\gamma_{p/q,\#(\mathscr M_{p/q})}^{k}<\tilde\gamma_{p/q,1}^{k+1}<\cdots
	\end{align*}
	and
	\begin{align*}
		\cdots<\gamma_{p/q,j}=\tilde\gamma_{p/q,j}^{1}<\cdots<\tilde\gamma_{p/q,j}^{k-1}<\tilde\gamma_{p/q,j}^{k}<\cdots<\tilde\gamma_{p/q,j
		}^{q}=\tilde\gamma_{p/q,j}^1+1<\cdots.
	\end{align*}
	Furthermore, $\tilde\gamma_{p/q,j}^k<\tilde\gamma_{p/q,j+1}^k$ and $\tilde\gamma_{p/q,\#(N_{p/q})}^k<\tilde\gamma_{p/q,}^{k+1}$ are pairs of $p/q$-neighboring minimal orbits: here are no further minimal orbits lifted from $\mathscr M_{p/q}$ between them. Due to the uniform hyperbolicity of $\mathscr M_{p/q}$ and the periodicity of $L$, given $t_{j,k}\in\Z$ for $j=0,\cdots,\#(\mathscr M_{p/q})-1$ and $k=0,\cdots,q-1$, there exists a heteroclinic orbit $\tilde\xi_{p/q,j+1}^{k}$ lifted from $\mathscr A_{p/q}\setminus\mathscr M_{p/q}$, which is $\alpha$-asymptotic to $\tilde\gamma_{p/q,j}^{k}$ and $\omega$-asymptotic to $\tilde\gamma_{p/q,j+1}^{k}$: we can find some $C_0,\lambda>0$ depending on the Euler-Lagrange flow $\Phi_L$, such that for $k=0,\cdots,q-1$, 
	\begin{align}\label{eq:exp-convergence-1}
	\left\{
	\begin{array}{ll}
		d_{T\T}\left(d\tilde\xi_{p/q,j}^{k}(t),d\tilde\gamma_{p/q,j}^{k}(t)\right)\leqslant C_0e^{-\lambda (t-t_{j,k})},& t\geqslant t_{j,k};\\
		d_{T\T}\left(d\tilde\xi_{p/q,j}^{k}(t),d\tilde\gamma_{p/q,j-1}^{k}(t)\right)\leqslant C_0e^{\lambda (t-t_{j,k})},& t\leqslant t_{j,k},\\
	\end{array}
	\right. (j=1,\cdots,\#(\mathscr M_{p/q})-1)
	\end{align}
and
\begin{align}\label{eq:exp-convergence-2}
\left\{
	\begin{array}{ll}
	d_{T\T}\left(d\tilde\xi_{p/q,\#(\mathscr M_{p/q})}^{k}(t),d\tilde\gamma_{p/q,0}^{k+1}(t)\right)\leqslant C_0e^{-\lambda (t-t_{\#(\mathscr M_{p/q}),k})},& t\geqslant t_{\#(\mathscr M_{p/q}),k};\\
		d_{T\T}\left(d\tilde\xi_{p/q,\#(\mathscr M_{p/q})}^{k}(t),d\tilde\gamma_{p/q,\#(\mathscr M_{p/q})-1}^{k}(t)\right)\leqslant C_0e^{\lambda (t-t_{\#(\mathscr M_{p/q}),k})},& t\leqslant t_{\#(\mathscr M_{p/q}),k},\\
\end{array}
\right.
\end{align}
where $d_{T\T}$ is the metric of $T\T$ induced by flat metric of $\T$ and Euclid metric of $\R$.
\end{proof}
	Theorem \ref{thm:exp-flat-all} can be proved by several conclusions that we present below. The subdifferential $D^-\beta(p/q)$ is an interval $[c^-,c^+]$. For the cohomology class $c^+$, the Aubry set $\mathscr A(c^+)$ consists of the periodic orbits $\tilde\gamma_{p/q}$ with rotation number $p/q$ and all the forward heteroclinic orbits $\tilde\xi_{p/q}$ (see \cite[Section 8]{Mather1993}).

\subsubsection{Idea for proving Theorem \ref{thm:exp-flat-all}}
We start by giving a brief idea of the proof of Theorem \ref{thm:exp-flat-all}. To show the way to construct a long-range closed curve, we state a concrete and simplified example. The formal argument for the general case will be given in the next subsection.  
 
  Consider an example with $p/q=3/5$ and $\#(\mathscr M_{p/q})=1$ (Figure \ref{fig:concatenation}). First we pick an arbitrary positive integer $T$, and consider the case of $\delta=1/(2Tq)$. According to Proposition \ref{pro:structure}, we shall construct $\tilde\zeta$ by choosing segments of appropriate heteroclinic orbits $\tilde\xi_{p/q}^k$ between each two nearby lifts of periodic orbits with time interval $[(2k-3)T,(2k-1)T]$ for $k=1,\cdots,q$, so that we can concatenate them to form a loop on torus after the exponentially small deformation at both ends of each $\tilde\xi_{p/q}^k$. In this way, the rotation number of $\tilde\zeta$ is $p/q+\delta$. 
   \begin{figure}[h]\label{fig:concatenation}
	\begin{center}
		\begin{tikzpicture}[scale=0.92]
		    \draw [] (-6,-3) .. controls (0,-2) .. (6,-1);
			\draw [] (-6,-4) .. controls (0,-3) .. (6,-2);
			\draw [] (-6,-5) .. controls (0,-4) .. (6,-3);
			\draw [] (-6,-6) .. controls (0,-5) .. (6,-4);
			\draw [] (-6,-7) .. controls (0,-6) .. (6,-5);
			\draw [] (-6,-8) .. controls (0,-7) .. (6,-6);
			\draw  (-6,-3) node[above] {$\scriptstyle\tilde\gamma_{3/5}^5$}; 
			\draw  (-6,-4) node[above] {$\scriptstyle\tilde\gamma_{3/5}^4$}; 
			\draw  (-6,-5) node[above] {$\scriptstyle\tilde\gamma_{3/5}^3$};
			\draw  (-6,-6) node[above] {$\scriptstyle\tilde\gamma_{3/5}^2$};
			\draw  (-6,-7) node[above] {$\scriptstyle\tilde\gamma_{3/5}^1$};
			\draw  (-6,-8) node[above] {$\scriptstyle\tilde\gamma_{3/5}^0$};
			\draw [domain=-6:-4,black,thick] plot (\x,{1/6*\x-7+1/(1+20^(-\x-5))});
			\draw [domain=-6.2:-3,black,dashed,thick] plot (\x,{1/6*\x-7+1/(1+20^(-\x-5))});
			\draw [domain=-4:-2,black,thick] plot (\x,{1/6*\x-6+1/(1+20^(-\x-3))});
			\draw [domain=-5:-1,black,dashed,thick] plot (\x,{1/6*\x-6+1/(1+20^(-\x-3))});
			\draw [domain=-2:0,black,thick] plot (\x,{1/6*\x-5+1/(1+20^(-\x-1))});
			\draw [domain=-3:1,black,dashed,thick] plot (\x,{1/6*\x-5+1/(1+20^(-\x-1))});
			\draw [domain=0:2,black,thick] plot (\x,{1/6*\x-4+1/(1+20^(-\x+1))});
			\draw [domain=-1:3,black,dashed,thick] plot (\x,{1/6*\x-4+1/(1+20^(-\x+1))});
			\draw [domain=2:4,black,thick] plot (\x,{1/6*\x-3+1/(1+20^(-\x+3))});
			\draw [domain=1:5,black,dashed,thick] plot (\x,{1/6*\x-3+1/(1+20^(-\x+3))});
			\draw [domain=-6:-4,gray,dashed] plot (\x,{1/6*\x-7+1/(1+20^(-\x-7))});
			\draw [domain=-6:-3,gray,dashed] plot (\x,{1/6*\x-7+1/(1+20^(-\x-6))});
			\draw [domain=-6:-2,gray,dashed] plot (\x,{1/6*\x-7+1/(1+20^(-\x-4))});
			\draw [domain=-5:-1,gray,dashed] plot (\x,{1/6*\x-7+1/(1+20^(-\x-3))});
			\draw [domain=-6:-3,gray,dashed] plot (\x,{1/6*\x-6+1/(1+20^(-\x-5))});
			\draw [domain=-6:-2,gray,dashed] plot (\x,{1/6*\x-6+1/(1+20^(-\x-4))});
			\draw [domain=-4:0,gray,dashed] plot (\x,{1/6*\x-6+1/(1+20^(-\x-2))});
			\draw [domain=-3:1,gray,dashed] plot (\x,{1/6*\x-6+1/(1+20^(-\x-1))});
			\draw [domain=-5:-1,gray,dashed] plot (\x,{1/6*\x-5+1/(1+20^(-\x-3))});
			\draw [domain=-4:0,gray,dashed] plot (\x,{1/6*\x-5+1/(1+20^(-\x-2))});
			\draw [domain=-2:2,gray,dashed] plot (\x,{1/6*\x-5+1/(1+20^(-\x))});
			\draw [domain=-1:3,gray,dashed] plot (\x,{1/6*\x-5+1/(1+20^(-\x+1))});
			\draw [domain=-3:1,gray,dashed] plot (\x,{1/6*\x-4+1/(1+20^(-\x-1))});
			\draw [domain=-2:2,gray,dashed] plot (\x,{1/6*\x-4+1/(1+20^(-\x))});
			\draw [domain=0:4,gray,dashed] plot (\x,{1/6*\x-4+1/(1+20^(-\x+2))});
			\draw [domain=1:5,gray,dashed] plot (\x,{1/6*\x-4+1/(1+20^(-\x+3))});
			\draw [domain=-1:3,gray,dashed] plot (\x,{1/6*\x-3+1/(1+20^(-\x+1))});
			\draw [domain=0:4,gray,dashed] plot (\x,{1/6*\x-3+1/(1+20^(-\x+2))});
			\draw [domain=2:6,gray,dashed] plot (\x,{1/6*\x-3+1/(1+20^(-\x+4))});
			\draw [domain=3:6,gray,dashed] plot (\x,{1/6*\x-3+1/(1+20^(-\x+5))});
            \draw [domain=-6+2/3:-4-2/3,red,thick] plot (\x,{1/6*\x-7+1/(1+20^(-\x-5))});%
			\draw [domain=-6:-6+2/3,red,thick] plot (\x,{1/6*\x-7+1/(1+20^(-\x-5))+3/2*(\x+6-2/3)/21});
			\draw [domain=-4-2/3:-4,red,thick] plot (\x,{1/6*\x-7+1/(1+20^(-\x-5))-3/2*(-\x-4-2/3)/21});

            \draw [domain=-4+2/3:-2-2/3,red,thick] plot (\x,{1/6*\x-6+1/(1+20^(-\x-3))});
            \draw [domain=-4:-4+2/3,red,thick] plot (\x,{1/6*\x-6+1/(1+20^(-\x-3))+3/2*(\x+4-2/3)/21});
			\draw [domain=-2-2/3:-2,red,thick] plot (\x,{1/6*\x-6+1/(1+20^(-\x-3))-3/2*(-\x-2-2/3)/21});
			
            \draw [domain=-2+2/3:0-2/3,red,thick] plot (\x,{1/6*\x-5+1/(1+20^(-\x-1))});
            \draw [domain=-2:-2+2/3,red,thick] plot (\x,{1/6*\x-5+1/(1+20^(-\x-1))+3/2*(\x+2-2/3)/21});
			\draw [domain=0-2/3:0,red,thick] plot (\x,{1/6*\x-5+1/(1+20^(-\x-1))-3/2*(-\x-2/3)/21});
			
            \draw [domain=+2/3:2-2/3,red,thick] plot (\x,{1/6*\x-4+1/(1+20^(-\x+1))});
            \draw [domain=0:2/3,red,thick] plot (\x,{1/6*\x-4+1/(1+20^(-\x+1))+3/2*(\x-2/3)/21});
			\draw [domain=2-2/3:2,red,thick] plot (\x,{1/6*\x-4+1/(1+20^(-\x+1))-3/2*(-\x+2-2/3)/21});
			
            \draw [domain=2+2/3:4-2/3,red,thick] plot (\x,{1/6*\x-3+1/(1+20^(-\x+3))});
            \draw [domain=2:2+2/3,red,thick] plot (\x,{1/6*\x-3+1/(1+20^(-\x+3))+3/2*(\x-2-2/3)/21});
			\draw [domain=4-2/3:4,red,thick] plot (\x,{1/6*\x-3+1/(1+20^(-\x+3))-3/2*(-\x+4-2/3)/21});
           
           \filldraw [red](-6,-8) circle (1.25pt) node[anchor=north] {$\scriptstyle\tilde\gamma_{3/5}^0(-1)$};
           \filldraw [red](-4,-20/3) circle (1pt) node[anchor=north west] {$\scriptstyle\tilde\gamma_{3/5}^1(1)$};
           \filldraw [red](-2,-16/3) circle (1pt) node[anchor=north west] {$\scriptstyle\tilde\gamma_{3/5}^2(3)$};
           \filldraw [red](0,-4) circle (1pt) node[anchor=north west] {$\scriptstyle\tilde\gamma_{3/5}^3(5)$};
           \filldraw [red](2,-8/3) circle (1pt) node[anchor=north west] {$\scriptstyle\tilde\gamma_{3/5}^4(7)$};
           \filldraw [red](4,-4/3) circle (1.25pt) node[anchor=south] {$\scriptstyle\tilde\gamma_{3/5}^5(9)=\tilde\gamma_{3/5}^0(9)+1$};
           
           \draw [red] (-4.9,-22/3) node[right] {$\scriptstyle\tilde\zeta_{3/5}^1$};
           \draw [black] (-5.1,-22/3) node[left] {$\scriptstyle\tilde\xi_{3/5}^1$};
           \draw [red] (-4.9+2,-6) node[right] {$\scriptstyle\tilde\zeta_{3/5}^2$};
           \draw [black] (-5.1+2,-6) node[left] {$\scriptstyle\tilde\xi_{3/5}^2$};
           \draw [red] (-4.9+4,-6+4/3) node[right] {$\scriptstyle\tilde\zeta_{3/5}^3$};
           \draw [black] (-5.1+4,-6+4/3) node[left] {$\scriptstyle\tilde\xi_{3/5}^3$};
           \draw [red] (-4.9+6,-6+8/3) node[right] {$\scriptstyle\tilde\zeta_{3/5}^4$};
           \draw [black] (-5.1+6,-6+8/3) node[left] {$\scriptstyle\tilde\xi_{3/5}^4$};
           \draw [red] (-4.9+8,-6+4) node[right] {$\scriptstyle\tilde\zeta_{3/5}^5$};
           \draw [black] (-5.1+8,-6+4) node[left] {$\scriptstyle\tilde\xi_{3/5}^5$};
           
		\end{tikzpicture}
	\end{center}
\caption{Construction of a loop by concatenating heteroclinic orbits ($p/q=3/5$, $T=1$, $\#(\mathscr M_{3/5})=1$): The black straight lines represent the lifts of the periodic orbit in $\mathscr M_{3/5}$ in $\R$. The thick red curve is formed by selecting specific segments (indicated by the thick black curves) from the gray dashed heteroclinic orbits and concatenating them after an exponentially small deformation.}
\end{figure}

Suppose $c^+$ is the cohomology class at the right endpoint of $D^-\beta(p/q)$. 
Let $\tilde\zeta_{p/q}^k$ be the concatenated segment of $\tilde\zeta$.  Then the exponentially small deformation implies $S_{c^+}(\tilde\zeta_{p/q}^k)=S_{c^+}(\tilde\xi_{p/q}^k)+O(e^{-\lambda T})$.
Also, 
    \begin{align*}
    \beta(p/q+\delta)&\leqslant \frac{1}{2Tq}S(\tilde\zeta)=\frac{1}{2qT}\sum_{k=1}^qS(\tilde\zeta_{p/q}^k)\\
    &=\frac{1}{2qT}\sum_{k=1}^qS_{c^+}(\tilde\xi_{p/q}^k|_{((2k-3)T,(2k-1)T)})+c^+(p/q+\delta)-\alpha(c^+)+O(e^{-\lambda T})\\
    &=\frac{1}{2qT}\sum_{k=1}^qS_{c^+}(\tilde\xi_{p/q}^k|_{((2k-3)T,(2k-1)T)})+c^+\delta+\beta(p/q)+O(e^{-\lambda T}).
    \end{align*}

In addition, the \(c^+\)-staticity of the heteroclinic orbits and the cyclic matching of their limiting periodic endpoints imply
$$
\lim_{n\to\infty}
\sum_{k=1}^{q}
S_{c^+}(
\tilde\xi_{p/q}^{k}
|_{[(2k-3)T-nq,\,(2k-1)T+nq]}
)=0.
$$
Comparing each of the two additional tails with the corresponding asymptotic periodic orbit, whose \(c^+\)-action over an integer multiple of \(q\) vanishes, and applying the exponential estimates in Proposition 3.8, we obtain
$$
\left|
\sum_{k=1}^{q}
S_{c^+}(
\tilde\xi_{p/q}^{k}
|_{[(2k-3)T,\,(2k-1)T]}
)
\right|
=O\left(qe^{-\lambda T}\right).
$$
The cancellation and the tail estimates are justified in the proof of Lemma \ref{pro:exp-flat-rational-integer} below. Together with the preceding inequality, this gives an exponential flatness estimate for \(\delta=1/(2Tq)\), \(T\in\mathbb N^*\). Convexity then extends the estimate to all sufficiently small \(\delta>0\), as detailed in the next subsection.

\subsubsection{Proof of Theorem \ref{thm:exp-flat-all}}
The following lemmas make the preceding argument rigorous. We set $t_{j,k}=2[k\#(\mathscr M_{p/q})+(j-1)]T$ for $k=0,\cdots,q-1$ and $j=1,\cdots,\#(\mathscr M_{p/q})$ in Proposition \ref{pro:structure}. 
\begin{Lem}\label{lem:beta-delta-upperbound}
	Under the assumptions in Proposition \ref{pro:structure}, there are $C_1,\lambda>0$ depending on $L$, such that for any $\delta>0$ with $T:=\frac{1}{2q\#(\mathscr M_{p/q})\delta}\in\N$, 
	\begin{align}\label{eq:beta-upperbound}
	\beta(p/q+\delta)\leqslant \delta\sum_{k=0}^{q-1}\sum_{j=1}^{\#(\mathscr M_{p/q})}S(\tilde\xi_{p/q,j}^k|_{[t_{j,k}-T,t_{j,k}+T]})+C_1\#(\mathscr M_{p/q})q\delta e^{-\frac{\lambda}{2q\#(\mathscr M_{p/q})\delta}},
	\end{align}		
where $C_1$ can be chosen as $C_1:=2C_0\Lip(L,K)$.
\end{Lem}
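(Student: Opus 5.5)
We build one closed curve on $\T$ with rotation number exactly $p/q+\delta$ and bound $\beta(p/q+\delta)$ by its mean action, via \eqref{eq:beta-2} (equivalently \eqref{eq:beta} applied to the invariant measure carried by a periodic minimizer, whose action is at most that of our curve). Write $N:=\#(\mathscr M_{p/q})$ and take $T=1/(2qN\delta)\in\N$. The curve is assembled from the segments $\tilde\xi_{p/q,j}^k|_{[t_{j,k}-T,t_{j,k}+T]}$, $k=0,\dots,q-1$, $j=1,\dots,N$, in the lexicographic order of $(k,j)$. Since $t_{j,k}=2[kN+(j-1)]T$, consecutive windows abut: the right end of one is $t_{j,k}+T$, which is the left end of the next. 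The total time is therefore $2qNT=1/\delta$.

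\textbf{Gluing at the junctions.} By Proposition~\ref{pro:structure}, at the right end $t_{j,k}+T$ the segment $\tilde\xi_{p/q,j}^k$ is within $C_0e^{-\lambda T}$ in $T\T$ of the lift $\tilde\gamma_{p/q,j}^k$ of the next periodic orbit. At the same time the next heteroclinic, by \eqref{eq:exp-convergence-1}--\eqref{eq:exp-convergence-2}, is within $C_0e^{-\lambda T}$ of that same lift at its left end $t_{j+1,k}-T$. Hence the endpoints of consecutive segments differ by at most $2C_0e^{-\lambda T}$. We modify each segment on a unit time interval near its ends, for example by adding a linear interpolation of the endpoint mismatch over a time of length one, so that the pieces join continuously. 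The resulting curve $\tilde\zeta$ is absolutely continuous, stays in a compact set $K$ fixed by the homology interval $[0,1]$, and the action changes by at most $\Lip(L,K)\cdot 2C_0e^{-\lambda T}$ per junction. There are $qN$ junctions, so the total error is $2C_0\Lip(L,K)\,qN e^{-\lambda T}$.

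\textbf{Closing condition.} We must check that the final endpoint equals the initial one shifted by $p+1$ on $\R$, so that $\tilde\zeta$ projects to a closed curve on $\T$ whose time span is $1/\delta$. Over the span the curve advances by $p\cdot(2qNT)/q$ through the periodic asymptotics, plus one extra unit obtained by climbing through all $qN$ neighbouring lifts, using the ordering in Proposition~\ref{pro:structure}. One also needs the time span to be compatible with the time-periodicity of $L$, i.e. an integer; this holds because $T\in\N$. The displacement of $p/q\cdot(1/\delta)+1$ over time $1/\delta$ then gives rotation number $p/q+\delta$. Dividing the action by the time $1/\delta$ gives
\[
\beta(p/q+\delta)\le \delta\sum_{k,j}S\bigl(\tilde\xi_{p/q,j}^k|_{[t_{j,k}-T,t_{j,k}+T]}\bigr)+2C_0\Lip(L,K)\,qN\delta e^{-\lambda T},
\]
and substituting $T=1/(2qN\delta)$ yields \eqref{eq:beta-upperbound} with $C_1=2C_0\Lip(L,K)$.

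\textbf{Expected obstacle.} The main difficulty is the bookkeeping of lifts. We must verify that the heteroclinic assigned to the window at $t_{j,k}$ is asymptotic, at the window boundary, to the same lift of $\gamma_{p/q,j}$ as the next heteroclinic; with time shifts by multiples of $q$ this can be arranged using $\tilde\gamma(t+q)=\tilde\gamma(t)+p$. We must also verify that the accumulated integer shift is exactly $+1$. A minor point is that the interpolation perturbs the velocity as well as the position, so the action error should be controlled through the $d_{T\T}$ estimates; this is why the junction bound uses the Lipschitz constant of $L$ on $K$ rather than only its dependence on $x$.
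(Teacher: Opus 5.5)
Your proposal is correct and follows the paper's proof, with $N=\#(\mathscr M_{p/q})$. The paper also concatenates the segments $\tilde\xi_{p/q,j}^k|_{[t_{j,k}-T,t_{j,k}+T]}$, repairs the exponentially small endpoint mismatches by linear interpolation, checks that the homology is $(2NTp+1)/(2NTq)=p/q+\delta$, and divides the action by the period $1/\delta$. The one cosmetic difference is bookkeeping: the paper snaps both ends of every segment onto the periodic lifts, at cost $\Lip(L,K)C_0e^{-\lambda T}$ per end, while you correct each of the $qN$ junctions once at cost $2\Lip(L,K)C_0e^{-\lambda T}$, which gives the same total. Your justification of the $\beta$ bound through a periodic minimizer is, if anything, more careful than the paper's direct appeal to \eqref{eq:beta-2}. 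The lift bookkeeping you flag as the main obstacle is supplied directly by Proposition~\ref{pro:structure}, which is stated with the prescribed times $t_{j,k}$ and the specified asymptotic lifts. Finally, the closing shift is $2NTp+1$ rather than $p+1$, as you in fact use a line later.
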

\begin{proof}
	By Proposition \ref{pro:structure}, we obtain $\tilde\xi_{p/q,j}^k$ satisfying \eqref{eq:exp-convergence-1}-\eqref{eq:exp-convergence-2}. We define 
	a curve
	\begin{equation}\label{eq:zeta-k}
	\begin{aligned}
		&\tilde\zeta_{p/q}^k:[t_{1,k}-T,t_{\#(\mathscr M_{p/q}),k}+T]\to\R\\
		&\tilde\zeta_{p/q}^k(t):=\tilde\zeta_{p/q,j}^k(t),\,\,\,\,t\in[t_{j,k}-T,t_{j,k}+T],
	\end{aligned}
	\end{equation}   
	where            
	\begin{align}
		\label{eq:zeta-k}
		&\tilde\zeta_{p/q}^k(t):=\notag\\
		&\left\{
		\begin{array}{ll}
			\tilde\xi_{p/q,j}^k(t)-\frac{1}{\tau}(t-t_{j,k}+T-\tau)\left[\tilde\gamma_{p/q,j-1}^{k}(t_{j,k}-T)-\tilde\xi_{p/q,j}^k(t_{j,k}-T)\right],& t\in[t_{j,k}-T,t_{j,k}-T+\tau];\\
			\tilde\xi_{p/q,j}^k(t),&t\in(t_{j,k}-T+\tau,t_{j,k}+T-\tau];\\
			\tilde\xi_{p/q,j}^k(t)-\frac{1}{\tau}(t_{j,k}+T-\tau-t)\left[\tilde\gamma_{p/q,j}^k(t_{j,k}+T)-\tilde\xi_{p/q,j}^k(t_{j,k}+T)\right],&t\in(t_{j,k}+T-\tau,t_{j,k}+T], \\
			&j\neq \#(\mathscr M_{p/q});\\
			\tilde\xi_{p/q,j}^k(t)-\frac{1}{\tau}(t_{j,k}+T-\tau-t)\left[\tilde\gamma_{p/q,0}^{k+1}(t_{j,k}+T)-\tilde\xi_{p/q,j}^k(t_{j,k}+T)\right],&t\in(t_{j,k}+T-\tau,t_{j,k}+T],\\
			 &j=\#(\mathscr M_{p/q})\\
		\end{array}
		\right.
	\end{align}   
	for small $\tau\in(0,1]$. Note that $\tilde\zeta_{p/q}^k(t_{\#(\mathscr M_{p/q}),k}+T)=\tilde\gamma_{p/q,0}^{k+1}(t_{\#(\mathscr M_{p/q}),k}+T)=\tilde\zeta_{p/q}^{k+1}(t_{1,k+1}-T)$. Therefore, we can connect the above curves as $\tilde\zeta_{p/q}:[-T,(2q\#(\mathscr M_{p/q})-1)T]\to\R$
	\begin{align*}
		\tilde\zeta_{p/q}(t):=\tilde\zeta_{p/q}^k(t),\,\,\,\,t\in[t_{1,k}-T,t_{\#(\mathscr M_{p/q}),k}+T], \,\,\,\,\,\,k=0,1,\cdots,q-1. 
	\end{align*}
	In addition, 
	\begin{align*}
		\tilde\zeta_{p/q}((2q\#(\mathscr M_{p/q})-1)T)&=\tilde\zeta_{p/q}^{q-1}((2q\#(\mathscr M_{p/q})-1)T)=\tilde\gamma_{p/q,0}^{q}((2q\#(\mathscr M_{p/q})-1)T)\\
		&=\tilde\gamma_{p/q,0}^{0}((2q\#(\mathscr M_{p/q})-1)T)+1=\tilde\gamma_{p/q,0}^{0}(-T)+2\#(\mathscr M_{p/q})Tp+1\\
		&=\tilde\zeta_{p/q}(-T)+2\#(\mathscr M_{p/q})Tp+1.
	\end{align*}
	which means that the homology class of $\zeta_{p/q}$, the projection of $\tilde\zeta_{p/q}$ on $\T$, is 
	\begin{align*}
		\left[\zeta_{p/q}\right]=\frac{2\#(\mathscr M_{p/q})Tp+1}{2\#(\mathscr M_{p/q})Tq}=\frac{p}{q}+\delta.
	\end{align*}
	Therefore \eqref{eq:beta-2} gives that
 \begin{align}\label{eq:beta-xi-zeta}
 	\beta(p/q+\delta)\leqslant \frac{S(\tilde\zeta_{p/q})}{2\#(\mathscr M_{p/q})Tq}=\frac{1}{2\#(\mathscr M_{p/q})Tq}\sum_{k=0}^{q-1}S(\tilde\zeta_{p/q}^k).
 \end{align}
Next we will give an upper bound of $S(\tilde\zeta_{p/q}^k)$. We rewrite 
	\begin{align}
		S(\tilde\zeta_{p/q}^k)=\sum_{j=1}^{\#(\mathscr M_{p/q})}\left[S(\tilde\xi_{p/q,j}^k|_{[t_{j,k}-T,t_{j,k}+T]})+S(\tilde\zeta_{p/q,j}^k)-S(\tilde\xi_{p/q,j}^k|_{[t_{j,k}-T,t_{j,k}+T]})\right].\label{eq:action-zeta-i}
	\end{align}
It is easy to calculate according to \eqref{eq:zeta-k} that, on $[t_{j,k}-T,t_{j,k}+T]$
	\begin{align}
		S(\tilde\zeta_{p/q,j}^k)-S(\tilde\xi_{p/q,j}^k)&=\int_{t_{j,k}-T}^{t_{j,k}+T}L\left(t,\tilde\zeta_{p/q}^k(t),\dot{\tilde\zeta}_{p/q}^k(t)\right)-L\left(t,\tilde\xi_{p/q}^k(t),\dot{\tilde\xi}_{p/q}^k(t)\right)\,dt\notag\\
		&=\int_{t_{j,k}-T}^{t_{j,k}-T+\tau}L\left(t,\tilde\zeta_{p/q,j}^k(t),\dot{\tilde\zeta}_{p/q,j}^k(t)\right)-L\left(t,\tilde\xi_{p/q,j}^k(t),\dot{\tilde\xi}_{p/q,j}^k(t)\right)\,dt\notag\\
		&\,\,\,\,+\int_{t_{j,k}+T-\tau}^{t_{j,k}+T}L\left(t,\tilde\zeta_{p/q,j}^k(t),\dot{\tilde\zeta}_{p/q}^k(t)\right)-L\left(t,\tilde\xi_{p/q,j}^k(t),\dot{\tilde\xi}_{p/q,j}^k(t)\right)\,dt=:\text{I}_{j,k}^{-}+\text{I}_{j,k}^{+}\label{eq:I_i+-}
	\end{align}
for each $k$. As for $\text{I}_{j,k}^-$, following from \eqref{eq:zeta-k} and \eqref{eq:exp-convergence-1},
    \begin{align}
		|\text{I}_{j,k}^{-}|&\leqslant\int_{t_{j,k}-T}^{t_{j,k}-T+\tau}\left|\frac{\Lip(L,K)(t-t_{j,k}+T-\tau)}{\tau}\left(\tilde\gamma_{p/q,j-1}^{k}(t_{j,k}-T)-\tilde\xi_{p/q,j}^k(t_{j,k}-T)\right)\right|\,dt\notag\\
		&\leqslant\Lip(L,K)C_0e^{-\lambda T}.\label{eq:I_k-}
	\end{align}
Likewise, we can also get whether $j$ equals $\#(M_{p/q})$ or not,
\begin{align}\label{eq:I_k+}
	|\text{I}_{j,k}^{+}|&\leqslant\Lip(L,K)C_0e^{-\lambda T}.
\end{align}
Consequently, substituting $T=\frac{1}{2q\#(\mathscr M_{p/q})\delta}$, we obtain by \eqref{eq:beta-xi-zeta}-\eqref{eq:I_k+} that 
	\begin{align*}
		\beta(p/q+\delta)&\leqslant \delta\sum_{k=0}^{q-1}\sum_{j=1}^{\#(\mathscr M_{p/q})}\left[S(\tilde\xi_{p/q,j}^k|_{[t_{j,k}-T,t_{j,k}+T]})+I_{j,k}^-+I_{j,k}^+\right]\\
		&\leqslant\delta\sum_{k=0}^{q-1}\sum_{j=1}^{\#(\mathscr M_{p/q})}S(\tilde\xi_{p/q,j}^k|_{[t_{j,k}-T,t_{j,k}+T]})+2\Lip(L,K)C_0q\#(\mathscr M_{p/q})\delta e^{-\frac{\lambda}{2q\#(\mathscr M_{p/q})\delta}}.
	\end{align*}
Letting $C_1:=2\Lip(L,K)C_0$, we have \eqref{eq:beta-upperbound}.
\end{proof}

	\begin{Lem}\label{pro:exp-flat-rational-integer}
	Under the assumptions in Lemma \ref{lem:beta-delta-upperbound}, 
	\begin{align}\label{eq:exp-flat}
	\beta(p/q+\delta)-\beta(p/q)-\beta_+'(p/q)\delta\leqslant C_2q\#(\mathscr M_{p/q})\delta e^{-\frac{\lambda}{2q\#(\mathscr M_{p/q})\delta}},
\end{align}	
where $C_2:=2(\Lip(L,K)/\lambda+1)C_0+C_1$.  
\end{Lem}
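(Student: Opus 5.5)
Starting from Lemma \ref{lem:beta-delta-upperbound}, I will convert each action $S(\tilde\xi_{p/q,j}^k|_{[t_{j,k}-T,t_{j,k}+T]})$ into a $c^+$-action, where $c^+$ is the right endpoint of $D^-\beta(p/q)=[c^-,c^+]$, so that $c^+=\beta_+'(p/q)$ (Appendix \ref{sec:convex}). On any interval of length $2T$,
\begin{align*}
S(\tilde\xi|_{[a,a+2T]})=S_{c^+}(\tilde\xi|_{[a,a+2T]})+c^+\bigl(\tilde\xi(a+2T)-\tilde\xi(a)\bigr)-2T\alpha(c^+).
\end{align*}
Summing over all $j,k$ and multiplying by $\delta=1/(2q\#(\mathscr M_{p/q})T)$, the $\alpha$-terms give $-\alpha(c^+)$. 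The displacement terms telescope along the concatenated curve, up to the exponentially small endpoint mismatches $\tilde\xi(t_{j,k}\pm T)-\tilde\gamma(t_{j,k}\pm T)=O(C_0e^{-\lambda T})$ from \eqref{eq:exp-convergence-1}--\eqref{eq:exp-convergence-2}. The total displacement is $2\#(\mathscr M_{p/q})Tp+1$, so after multiplying by $\delta$ it equals $p/q+\delta$. Since $c^+\in D^-\beta(p/q)$, we have $c^+p/q-\alpha(c^+)=\beta(p/q)$. Hence
\begin{align*}
\beta(p/q+\delta)\leqslant \beta(p/q)+c^+\delta+\delta\sum_{k,j}S_{c^+}(\tilde\xi_{p/q,j}^k|_{[t_{j,k}-T,t_{j,k}+T]})+O\bigl(q\#(\mathscr M_{p/q})\delta e^{-\lambda T}\bigr).
\end{align*}
The mismatch terms are absorbed into the $C_1$ and $C_0$ parts of $C_2$.

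The main step is to show that the total $c^+$-action of the truncated heteroclinic segments is at most $O(q\#(\mathscr M_{p/q})e^{-\lambda T})$. Each $\tilde\xi_{p/q,j}^k$ lies in $\mathscr A(c^+)$ (Mather's description, \cite[Section 8]{Mather1993}), so it is $c^+$-static. I extend each segment to $[t_{j,k}-T-nq,\,t_{j,k}+T+nq]$. Consider the chain $\tilde\gamma_{p/q,0}^0\to\tilde\gamma_{p/q,1}^0\to\cdots\to\tilde\gamma_{p/q,0}^{q}=\tilde\gamma_{p/q,0}^0+1$. It closes up to a deck translation, so staticity together with the triangle structure of $\Psi_{c^+}$ gives
\begin{align*}
\lim_{n\to\infty}\sum_{j,k}S_{c^+}\bigl(\tilde\xi_{p/q,j}^k|_{[t_{j,k}-T-nq,\,t_{j,k}+T+nq]}\bigr)=0.
\end{align*}
Concretely, the sum of Peierls barriers $h_{c^+}(\gamma_{j-1},\gamma_j)$ around a full cycle of static classes vanishes, because $S_{c^+}$ of a static curve from $x$ to $y$ equals $-\Psi_{c^+}(y,x)$ and these potentials are additive along the static chain. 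I expect this identity to be the delicate point. If it is not straightforward, the fallback is the inequality direction: the $c^+$-action of a static curve between two points is at most the potential, and the cycle of potentials is at most $0$, since one can close the chain into a loop whose action cannot be negative. Only the upper bound is needed.

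It remains to compare each added tail with the corresponding periodic orbit. On $[t_{j,k}+T,\,t_{j,k}+T+nq]$ the curve $\tilde\xi$ lies within $C_0e^{-\lambda(t-t_{j,k})}$ of $\tilde\gamma$ in $T\T$. The $c^+$-action of $\tilde\gamma$ over a full multiple of the period $q$ vanishes by item (1) of the proposition on static loops. Comparing the integrands with the Lipschitz constant of $L-c^+v+\alpha(c^+)$ on $K$ bounds each tail by
\begin{align*}
\int_T^\infty \Lip(L,K)\,C_0e^{-\lambda s}\,ds+C_0e^{-\lambda T}\leqslant (\Lip(L,K)/\lambda+1)C_0e^{-\lambda T}.
\end{align*}
Here the extra $C_0e^{-\lambda T}$ accounts for the $c^+v$ term, i.e. the endpoint displacement mismatch. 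Each segment has two tails, and there are $q\#(\mathscr M_{p/q})$ segments. Together with the previous paragraph this gives
\begin{align*}
\Bigl|\sum S_{c^+}\Bigr|\leqslant 2(\Lip(L,K)/\lambda+1)C_0\,q\#(\mathscr M_{p/q})e^{-\lambda T}.
\end{align*}
Multiplying by $\delta$, substituting $T=1/(2q\#(\mathscr M_{p/q})\delta)$, and adding the term $C_1$ from Lemma \ref{lem:beta-delta-upperbound} yields \eqref{eq:exp-flat} with $C_2=2(\Lip(L,K)/\lambda+1)C_0+C_1$.
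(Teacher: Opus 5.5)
Your proposal is correct and follows the paper's proof essentially step for step. You rewrite $S$ as $S_{c^+}$ plus telescoping displacement terms and $-\alpha(c^+)$, and extend each heteroclinic segment by $nq$ on both sides. You then use $c^+$-staticity and the cyclic matching of the limiting endpoints to get $\sum_{j,k}\Psi_{c^+}(x_{j,k}^-,x_{j,k}^+)=0$, and bound the added tails by comparison with the periodic orbits, whose $c^+$-action over multiples of $q$ vanishes.

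One caution: your stated fallback is backwards. Closing the chain into a loop only gives $\sum_{j,k}\Psi_{c^+}(x_{j,k}^-,x_{j,k}^+)\geqslant 0$. The upper bound you need comes from the antisymmetry $\Psi_{c^+}(x_{j,k}^-,x_{j,k}^+)=-\Psi_{c^+}(x_{j,k}^+,x_{j,k}^-)$, which requires staticity and not mere semi-staticity, combined with the triangle inequality around the reversed cycle, as in the paper. So this step cannot be bypassed.
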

\begin{proof}
First, the convexity of $\beta$ yields that (see Appendix \ref{sec:convex}) 
\begin{align*}
	D^-\beta(p/q)=\left[\beta_-'(p/q),\beta_+'(p/q)\right]=\left[c^-,c^+\right],
\end{align*}where $c^+=\beta_+'(p/q)$ is the cohomology such that $\tilde\gamma_{p/q,j}^k,\tilde\xi_{p/q,j}^k\subset\mathscr A(c^+)$ for all $k=0,\cdots,q-1$ and $j=1,\cdots,\#(\mathscr M_{p/q})$. This implies 
\begin{align}
	S_{c^+}(\tilde\gamma_{p/q,j}^{k}|_{[r,r+iq]})=\int_{r}^{r+iq} \left[L\left(t,\tilde\gamma_{p/q,j}^{k}(t),\dot{\tilde\gamma}_{p/q,j}^{k}(t)\right)-c^+\cdot\dot{\tilde\gamma}_{p/q,j}^{k}(t)+\alpha(c^+)\right]\,dt=0,\,\,\,\,\forall r\in\R,i\in\N\label{eq:aubry-1}
\end{align}and by Definition \ref{defn:Aubry},
\begin{equation}\label{eq:aubry-2}
\begin{aligned}
   S_{c^+}(\tilde\xi_{p/q,j}^k|_{[t_{j,k}-T-nq,t_{j,k}+T+nq]})&=\Psi_{c^+}(\pi\tilde\xi_{p/q,j}^k(t_{j,k}-T-nq),\pi\tilde\xi_{p/q,j}^k(t_{j,k}+T+nq))\\
   &=-\Psi_{c^+}(\pi\tilde\xi_{p/q,j}^k(t_{j,k}+T+nq),\pi\tilde\xi_{p/q,j}^k(t_{j,k}-T-nq)).
\end{aligned}
\end{equation} 
Here $\pi:\R\to\T$ denotes the canonical projection. Since $t_{j,k}\pm T\in\Z$, all endpoints have time coordinate $0\in\T=\R/\Z$. We therefore use the shorthand $\Psi_c(x,y)=\Psi_c((0,x),(0,y))$.    
As a result of \eqref{eq:aubry-2}, we assert that
\begin{align}\label{eq:Sc-xi}
	\lim_{n\to\infty}\sum_{k=0}^{q-1}\sum_{j=1}^{\#(\mathscr M_{p/q})}S_{c^+}(\tilde\xi_{p/q,j}^k|_{[t_{j,k}-T-nq,t_{j,k}+T+nq]})=0.
\end{align}
In fact, by \eqref{eq:exp-convergence-1}-\eqref{eq:exp-convergence-2} and the periodicity of neighboring orbits, $\pi\tilde\xi_{p/q,j}^k(t_{j,k}-T-nq)\to x_{j,k}^-$ and $\pi\tilde\xi_{p/q,j}^k(t_{j,k}+T+nq)\to x_{j,k}^+$, where
\begin{align*}
	x_{j,k}^-:=\pi\gamma_{p/q,j-1}^k(t_{j,k}-T),\,\,\,\,\,\, x_{j,k}^+:=
	\left\{
	\begin{array}{ll}
		\pi\gamma_{p/q,j}^k(t_{j,k}+T),& j=1,\cdots,\#(\mathscr M_{p/q})-1,\\
		\pi\gamma_{p/q,0}^{k+1}(t_{j,k}+T),& j=\#(\mathscr M_{p/q}).
	\end{array}
	\right.
\end{align*}
These are the junctions where $\tilde\zeta$ is obtained by connecting curve $\tilde\zeta_{p/q}^k$. We also have 
\begin{align*}
	\left\{
	\begin{array}{ll}
		x_{j,k}^+=x_{j+1,k}^-,&j=1,\cdots,\#(\mathscr M_{p/q})-1;\\
		x_{\#(\mathscr M_{p/q}),k}^+=x_{1,k+1}^-,&k=0,\cdots,q-2;\\
		x_{\#(\mathscr M_{p/q}),q-1}^+=x_{1,0}^-.&
	\end{array}
	\right.
\end{align*}
Combining with \eqref{eq:aubry-2} and the continuity of Ma\~n\'e potential, we can obtain
\begin{align*}
	\lim_{n\to\infty}S_{c^+}(\tilde\xi_{p/q,j}^k|_{[t_{j,k}-T-nq,t_{j,k}+T+nq]})=\Psi_{c^+}(x_{j,k}^-,x_{j,k}^+),
\end{align*}
as well as $\Psi_{c^+}(x_{j,k}^-,x_{j,k}^+)=-\Psi_{c^+}(x_{j,k}^+,x_{j,k}^-)$. 
By the triangle inequality of Ma\~n\'e potential,
\begin{align}\label{eq:mane-0-1}
	0=\Psi_{c^+}(x_{1,0}^-,x_{1,0}^-)\leqslant\sum_{k=0}^{q-1}\sum_{j=1}^{\#(\mathscr M_{p/q})}\Psi_{c^+}(x_{j,k}^-,x_{j,k}^+).
\end{align}
Conversely, 
\begin{align}\label{eq:mane-0-2}
	0=\Psi_{c^+}(x_{\#(\mathscr M_{p/q}),q-1}^+,x_{\#(\mathscr M_{p/q}),q-1}^+)\leqslant \sum_{k=0}^{q-1}\sum_{j=1}^{\#(\mathscr M_{p/q})}\Psi_{c^+}(x_{j,k}^+,x_{j,k}^-)=-\sum_{k=0}^{q-1}\sum_{j=1}^{\#(\mathscr M_{p/q})}\Psi_{c^+}(x_{j,k}^-,x_{j,k}^+).
\end{align}
\eqref{eq:mane-0-1}-\eqref{eq:mane-0-2} give us $\sum_{k=0}^{q-1}\sum_{j=1}^{\#(\mathscr M_{p/q})}\Psi_{c^+}(x_{j,k}^-,x_{j,k}^+)=0$ and thus \eqref{eq:Sc-xi} holds true by \eqref{eq:aubry-2}.

Invoking \eqref{eq:exp-convergence-1}, \eqref{eq:exp-convergence-2} and \eqref{eq:aubry-1}, we have
\begin{align*}
	\left|S_{c^+}(\tilde\xi_{p/q,j}^k|_{(t_{j,k}+T,t_{j,k}+T+nq)})\right|&=\left|[S_{c^+}(\tilde\xi_{p/q,j}^k)-S_{c^+}(\tilde\gamma_{p/q,j}^k)]|_{(t_{j,k}+T,t_{j,k}+T+nq)}\right|\\
	&\leqslant\int_{t_{j,k}+T}^{t_{j,k}+T+nq}\left|L_{c^+}(t,\tilde\xi_{p/q,j}^k(t),\dot{\tilde\xi}_{p/q,j}^k(t))-L_{c^+}(t,\tilde\gamma_{p/q,j}^k(t),\dot{\tilde\gamma}_{p/q,j}^k(t))\right|\,dt\\
	&\leqslant \Lip(L,K)C_0/\lambda\left(e^{-\lambda T}-e^{-\lambda(T+nq)}\right)
\end{align*}
if $j\neq\#(\mathscr M_{p/q})$ and when $j=\#(\mathscr M_{p/q})$ the estimate of $\left|S_{c^+}(\tilde\xi_{p/q,j}^k|_{(t_{j,k}+T,t_{j,k}+T+nq)})\right|$ is similar. Here, $L_{c^+}(t,x,v):=L(t,x,v)-c^+v$. On the other hand,  
\begin{align*}
	\left|S_{c^+}(\tilde\xi_{p/q,j}^k|_{(t_{j,k}-T-nq,t_{j,k}-T)})\right|&=\left|[S_{c^+}(\tilde\xi_{p/q,j}^k)-S_{c^+}(\tilde\gamma_{p/q,j-1}^k)]|_{(t_{j,k}-T-nq,t_{j,k}-T)}\right|\\
	&\leqslant\int_{t_{j,k}-T-nq}^{t_{j,k}-T}\left|L_{c^+}(t,\tilde\xi_{p/q,j}^k(t),\dot{\tilde\xi}_{p/q,j}^k(t))-L_{c^+}(t,\tilde\gamma_{p/q,j-1}^{k}(t),\dot{\tilde\gamma}_{p/q,j-1}^{k}(t))\right|\,dt\\
	&\leqslant \sup_{c\in[c_1,c_2]}\Lip(L_c,K)C_0/\lambda\left(e^{-\lambda T}-e^{-\lambda(T+nq)}\right).
\end{align*}
Note that
\begin{align*}
	S_{c^+}&(\tilde\xi_{p/q,j}^k|_{[t_{j,k}-T,t_{j,k}+T]})\\
	&=S_{c^+}(\tilde\xi_{p/q,j}^k|_{[t_{j,k}-T-nq,t_{j,k}+T+nq]})-S_{c^+}(\tilde\xi_{p/q,j}^k|_{[t_{j,k}-T-nq,t_{j,k}-T]})-S_{c^+}(\tilde\xi_{p/q,j}^k|_{[t_{j,k}+T,t_{j,k}+T+nq]}).
\end{align*}
Therefore,
\begin{align}
	&\left|\sum_{k=0}^{q-1}\sum_{j=1}^{\#(\mathscr M_{p/q})}S_{c^+}(\tilde\xi_{p/q,j}^k|_{[t_{j,k}-T,t_{j,k}+T]})\right|\notag\\
	&\leqslant \left|\sum_{k=0}^{q-1}\sum_{j=1}^{\#(\mathscr M_{p/q})}S_{c^+}(\tilde\xi_{p/q,j}^k|_{[t_{j,k}-T-nq,t_{j,k}+T+nq]})\right|+2\sup_{c\in[c_1,c_2]}\Lip(L_c,K)C_0/\lambda \cdot q\#(\mathscr M_{p/q})e^{-\lambda T}
\end{align}
Letting $n\to\infty$, we can get 
\begin{align}
	\sum_{k=0}^{q-1}\sum_{j=1}^{\#(\mathscr M_{p/q})}S_{c^+}(\tilde\xi_{p/q,j}^k|_{[t_{j,k}-T,t_{j,k}+T]})&\leqslant \left|\sum_{k=0}^{q-1}\sum_{j=1}^{\#(\mathscr M_{p/q})}S_{c^+}(\tilde\xi_{p/q,j}^k|_{[t_{j,k}-T,t_{j,k}+T]})\right|\notag\\
	&\leqslant 2\sup_{c\in[c_1,c_2]}\Lip(L_c,K)C_0/\lambda \cdot q\#(\mathscr M_{p/q})e^{-\lambda T}\label{eq:sum-Sc-xi}. 
\end{align}

In addition, due to \eqref{eq:exp-convergence-1}-\eqref{eq:exp-convergence-2} and \eqref{eq:sum-Sc-xi},
\begin{align}
	&\,\,\,\,\,\,\sum_{k=0}^{q-1}\sum_{j=1}^{\#(\mathscr M_{p/q})}S(\tilde\xi_{p/q,j}^k|_{[t_{j,k}-T,t_{j,k}+T]})\notag\\
	&=\sum_{k=0}^{q-1}\sum_{j=1}^{\#(\mathscr M_{p/q})}\left[S_{c^+}(\tilde\xi_{p/q,j}^k|_{[t_{j,k}-T,t_{j,k}+T]})+\int_{t_{j,k}-T}^{t_{j,k}+T}c^+\cdot\dot{\tilde\xi}_{p/q,j}^k(t)\,dt\right]-2Tq\#(\mathscr M_{p/q})\alpha(c^+)\notag\\
	&\leqslant \sum_{k=0}^{q-1}\sum_{j=1}^{\#(\mathscr M_{p/q})}\int_{t_{j,k}-T}^{t_{j,k}+T}c^+\left[\dot{\tilde\zeta}_{p/q,j}^k(t)+\left(\dot{\tilde\xi}_{p/q,j}^k(t)-\dot{\tilde\zeta}_{p/q,j}^k(t)\right)\right]\,dt\notag\\
	&\qquad\qquad\qquad\qquad\qquad\qquad-2Tq\#(\mathscr M_{p/q})\alpha(c^+)+2\sup_{c\in[c_1,c_2]}\Lip(L_c,K)C_0/\lambda\cdot q\#(\mathscr M_{p/q})e^{-\lambda T}\notag\\
	&\leqslant c^+(2T\#(\mathscr M_{p/q})p+1)-2T\#(\mathscr M_{p/q})q\alpha(c^+)+2\left(\sup_{c\in[c_1,c_2]}\Lip(L_c,K)/\lambda+1\right)C_0q\#(\mathscr M_{p/q})e^{-\lambda T}.\label{eq:sum-S-xi} 
\end{align}
Then by Lemma \ref{lem:beta-delta-upperbound}, \eqref{eq:sum-S-xi} provides
\begin{align*}
	&\beta(p/q+\delta)\\
	&\leqslant \delta\sum_{k=0}^{q-1}\sum_{j=1}^{\#(\mathscr M_{p/q})}S(\tilde\xi_{p/q,j}^k|_{[t_{j,k}-T,t_{j,k}+T]})+C_1q\#(\mathscr M_{p/q})\delta e^{-\frac{\lambda}{2q\#(\mathscr M_{p/q})\delta}}\\
	&\leqslant \frac{1}{2Tq\#(\mathscr M_{p/q})}\left(c^+(2Tp\#(\mathscr M_{p/q})+1)-2Tq\#(\mathscr M_{p/q})\alpha(c^+)\right)\\
	&\qquad\qquad\qquad\qquad\qquad\qquad+\left[2\left(\sup_{c\in[c_1,c_2]}\Lip(L_c,K)/\lambda+1\right)C_0+C_1\right]q\#(\mathscr M_{p/q})\delta e^{-\frac{\lambda}{2q\#(\mathscr M_{p/q})\delta}}\\
	&=c^+(p/q+\delta)-\alpha(c^+)+\left[2\left(\sup_{c\in[c_1,c_2]}\Lip(L_c,K)/\lambda+1\right)C_0+C_1\right]q\#(\mathscr M_{p/q})\delta e^{-\frac{\lambda}{2q\#(\mathscr M_{p/q})\delta}}.
\end{align*}
Furthermore, since $c^+=\beta_+'(p/q)$, $\alpha(c^+)+\beta(p/q)=c^+\cdot p/q$ (see Appendix \ref{sec:convex}). It means
\begin{align*}
	\beta(p/q+\delta)\leqslant c^+\delta+\beta(p/q)+C_2q\#(\mathscr M_{p/q})\delta e^{-\frac{\lambda}{2q\#(\mathscr M_{p/q})\delta}},
\end{align*}
where $C_2:=2(\sup_{c\in[c_1,c_2]}\Lip(L_c,K)/\lambda+1)C_0+C_1$. 
\end{proof}

Based on Lemma \ref{pro:exp-flat-rational-integer}, we can prove Theorem \ref{thm:exp-flat-all} now. 
\begin{proof}[Proof of Theorem \ref{thm:exp-flat-all}]
	Now we consider the case of $T=\frac{1}{2q\#(\mathscr M_{p/q})\delta}>0$ (it does not have to be an integer) and denote $n:=\left\lfloor\frac{1}{2q\#(\mathscr M_{p/q})\delta}\right\rfloor$. Owing to the relation $x-1<\lfloor x\rfloor\leqslant x$ for all $x\in\R$, we have
	\begin{align}\label{eq:delta-n}
		\frac{1}{2q\#(\mathscr M_{p/q})\delta}-1<n\leqslant\frac{1}{2q\#(\mathscr M_{p/q})\delta}.
	\end{align}
	Let $\delta_2:=\frac{1}{2q\#(\mathscr M_{p/q})n}$ and $\delta_1:=\frac{1}{2q\#(\mathscr M_{p/q})(n+1)}$, then $\delta_1<\delta\leqslant \delta_2$.
	
	Given $h\in\R$, we set the function
	\begin{align*}
		\psi(t):=\beta(h+t)-\beta(h)-\beta_+'(h)t,\,\,\,\,t>0.
	\end{align*}
	Due to the convexity of $\beta$, $\psi$ is a convex function and according to Lemma \ref{pro:exp-flat-rational-integer},
	\begin{align*}
		\psi(\delta_1)\leqslant C_2q\#(\mathscr M_{p/q})\delta_1e^{-\frac{\lambda}{2q\#(\mathscr M_{p/q})\delta_1}},\,\,\,\,\psi(\delta_2)\leqslant C_2q\#(\mathscr M_{p/q})\delta_2e^{-\frac{\lambda}{2q\#(\mathscr M_{p/q})\delta_2}}.
	\end{align*}
	Now we can assert that if $\delta\in(\delta_1,\delta_2]$, then $\psi(\delta)\leqslant\max\{\psi(\delta_1),\psi(\delta_2)\}=\psi(\delta_2)$.
	First, from the convexity of $\psi$, we can easily observe that $\delta\in(\delta_1,\delta_2]$, $\psi(\delta)\leqslant\max\{\psi(\delta_1),\psi(\delta_2)\}$. In addition, notice that $f(x):=C_2q\#(\mathscr M_{p/q})xe^{-\frac{\lambda}{2q\#(\mathscr M_{p/q})x}}$ is monotone increasing, then $\max\{\psi(\delta_1),\psi(\delta_2)\}=\psi(\delta_2)$. Therefore,
	\begin{align}\label{eq:psi<f}
		\psi(\delta)\leqslant C_2q\#(\mathscr M_{p/q})\delta_2e^{-\frac{\lambda}{2q\#(\mathscr M_{p/q})\delta_2}}=f(\delta_2).
	\end{align}
	Following from \eqref{eq:delta-n}, if $\delta<\frac{1}{2q\#(\mathscr M_{p/q})}$,
	\begin{align*}
		\delta\leqslant\frac{1}{2q\#(\mathscr M_{p/q})n}=\delta_2<\frac{1}{2q\#(\mathscr M_{p/q})\left(\frac{1}{2q\#(\mathscr M_{p/q})\delta}-1\right)}=\frac{\delta}{1-2q\#(\mathscr M_{p/q})\delta}.
	\end{align*}
	By using the monotonicity of $f$ again, 
	\begin{align}
		f(\delta_2)&<f\left(\frac{\delta}{1-2q\#(\mathscr M_{p/q})\delta}\right)=C_2\frac{q\#(\mathscr M_{p/q})\delta}{1-2q\#(\mathscr M_{p/q})\delta}e^{-\frac{\lambda}{2q\#(\mathscr M_{p/q})}\cdot\frac{1-2q\#(\mathscr M_{p/q})\delta}{\delta}}\label{eq:f-upperbound-1}.
	\end{align}
	When $\delta\leqslant\frac{1}{4q\#(\mathscr M_{p/q})}$, we can get $\frac{1}{1-2q\#(\mathscr M_{p/q})\delta}\leqslant 2$, which also implies
	\begin{align}
		f(\delta_2)<2C_2q\#(\mathscr M_{p/q})\delta e^{-\frac{\lambda}{4q\#(\mathscr M_{p/q})\delta}}\label{eq:f-upperbound-2}.
	\end{align}
Invoking \eqref{eq:psi<f}-\eqref{eq:f-upperbound-2}, we get 
\begin{align*}
	\beta(p/q+\delta)-\beta(p/q)-\beta_+'(p/q)\delta\leqslant 2C_2\#(\mathscr M_{p/q})q\delta e^{-\frac{\lambda}{4q\#(\mathscr M_{p/q})\delta}}
\end{align*}
for $\delta\leqslant\frac{1}{4q\#(\mathscr M_{p/q})}$. In view of Proposition \ref{pro:uni-num-bounded}, $\#(\mathscr M_{p/q})\leqslant N_{[c_1,c_2]}$, which indicates when $\delta\leqslant \frac{1}{4qN_{[c_1,c_2]}}=:\varepsilon$, $\beta$ satisfies \eqref{eq:exp-flat-all} after choosing 
\begin{align*}
	C:=2C_2N_{[c_1,c_2]}=4\left(\sup_{c\in[c_1,c_2]}\Lip(L_c,K)(1+1/\lambda)+1\right)C_0N_{[c_1,c_2]}.
\end{align*} Here we complete the proof. 
\end{proof}

\subsection{Aubry's lemma}

In this section, we give the proof of Lemma \ref{lem:var=0} following  (\cite{10.1007/BFb0093512,MR1219348}) and prove Lemma \ref{lem:haus-dim=0}.
\begin{proof}[Proof of Lemma \ref{lem:var=0}]
	Suppose $a\in(\R\setminus \Q)\cap (0,1]$. We first recall that for any $s>0$ and $\nu\in(0,1)$, such that there exists a sequence of rational numbers $\{p_i/q_i\}_{i\geqslant 1}\subset [0,1]$, fulfilling 
	\begin{align*}
		0<a-\frac{p_i}{q_i}<\frac{s}{q_i^{1+\nu}}.
	\end{align*}
	If we define the union of intervals
	\begin{align*}
		I(s,\nu,Q):=\bigcup_{q>Q,0<\frac{p}{q}\leqslant 1}\left(\frac{p}{q},\frac{p}{q}+\frac{s}{q^{1+\nu}}\right],\,\,\,\,Q>1,
	\end{align*}then we have $(\R\setminus \Q)\cap [0,1]\subset I(s,\nu,Q)$ for all $Q>1$ because of the arbitrariness of $a$. We can have the following estimate $\Var(\phi_+',(\R\setminus\Q)\cap(0,1])$, the total variation of $\phi_+'$ on $(\R\setminus\Q)\cap(0,1]$ by the fact that $\phi_+'$ is non-decreasing:
	\begin{equation}\label{eq:var-1}
	\begin{aligned}
		0&\leqslant \Var(\phi_+',(\R\setminus\Q)\cap(0,1])\\
		&\leqslant \Var(\phi_+',I(s,\nu,Q))\\
		&\leqslant \sum_{q>Q,0<\frac{p}{q}\leqslant 1}\Var\left(\phi_+',\left(\frac{p}{q},\frac{p}{q}+\frac{s}{q^{1+\nu}}\right]\right)\\
		&=\sum_{q>Q,0<\frac{p}{q}\leqslant 1}\phi_+'\left(\frac{p}{q}+\frac{s}{q^{1+\nu}}\right)-\phi_+'\left(\frac{p}{q}\right).
	\end{aligned}
	\end{equation}In addition, because $\phi_+'$ is non-decreasing,
	\begin{align*}
		\int_0^1\phi_+'\left(\frac{p}{q}+\frac{s}{q^{1+\nu}}\right)\,ds\leqslant q^{1+\nu}\left[\phi\left(\frac{p}{q}+\frac{1}{q^{1+\nu}}\right)-\phi\left(\frac{p}{q}\right)\right].
	\end{align*}
	So \eqref{eq:var-1} can be rewritten as
	\begin{align*}
		0\leqslant \Var(\phi_+',(\R\setminus\Q)\cap(0,1])\leqslant\sum_{q>Q,0<\frac{p}{q}\leqslant 1}q^{1+\nu}\left[\phi\left(\frac{p}{q}+\frac{1}{q^{1+\nu}}\right)-\phi\left(\frac{p}{q}\right)-\phi_+'\left(\frac{p}{q}\right)\frac{1}{q^{1+\nu}}\right].
	\end{align*}
	For fixed $\nu\in(0,1)$ and sufficiently large $Q>0$, when $q>Q$, $1/q^{1+\nu}<1/(4qN_{[c_1,c_2]})$. Then we obtain from \eqref{eq:u-flat} that 
	\begin{align*}
		0\leqslant \Var(\phi_+',(\R\setminus\Q)\cap(0,1])\leqslant\sum_{q>Q,0<\frac{p}{q}\leqslant 1}q^{1+\nu}u_q\left(\frac{1}{q^{1+\nu}}\right)\leqslant\sum_{q>Q}q^{2+\nu}u_q\left(\frac{1}{q^{1+\nu}}\right).
	\end{align*}
Combining with \eqref{eq:u-sum-finite}, we can conclude that the rightmost item of the above inequality goes to 0 as $Q\to\infty$, which means $\Var(\phi_+',(\R\setminus\Q)\cap(0,1])=0$. 

Since $\phi$ is strictly convex, $\psi$ as the conjugate dual of $\phi$ is convex and of class $C^1$. In this case, $x=D\psi(y)$ if and only if $y\in D^-\phi(x)=[\phi_-'(x),\phi_+'(x)]$ (see Section \ref{sec:convex}). This implies that $D\psi\equiv x$ on $[\phi_-'(x),\phi_+'(x)]$ and then $D^2\psi\equiv 0$ on $(\phi_-'(x),\phi_+'(x))$. 
Define 
\begin{align*}
	E:=\bigcup_{D\psi([\phi_-'(x),\phi_+'(x)])=\{x\}\subset\Q\cap(0,1]}[\phi_-'(x),\phi_+'(x)]=\bigcup_{x\in\Q\cap(0,1]}[\phi_-'(x),\phi_+'(x)].
\end{align*}It means that $\mathscr L\left(E\right)=\Var(\phi_+',\Q\cap(0,1])=\phi_+'(1)-\phi_+'(0)$. In addition, we note that $D^-\phi(0)=[\phi_-'(0),\phi_+'(0)]$. One can get
\begin{align*}
	\mathscr L\left(\bigcup_{x\in\Q\cap[0,1]}[\phi_-'(x),\phi_+'(x)]\right)=\mathscr L\left(E\right)+\mathscr L\left(D^-\phi(0)\right)=\phi_+'(1)-\phi_-'(0).
\end{align*}
Equivalently, we obtain $D^2\psi=0$ $\mathscr L$-almost everywhere. 
\end{proof}
\begin{proof}[Proof of Lemma \ref{lem:haus-dim=0}]
According to the structure of $D\psi$, 
\begin{align*}
	Y:=\left\{y:D^2\psi(y)\text{ does not exist}\right\}\subset [\phi_-'(0),\phi_+'(1)]\setminus\bigcup_{x\in\Q\cap[0,1]}(\phi_-'(x),\phi_+'(x)).
\end{align*} 
Based on the argument at the proof of Lemma \ref{lem:var=0}, it can be known that
\begin{align}\label{eq:cover}
	Y\subset [\phi_-'(0),\phi_+'(1)]\setminus\bigcup_{x\in\Q\cap[0,1]}(\phi_-'(x),\phi_+'(x))\subset \bigcup_{\substack{q>Q\\0<\frac{p}{q}\leqslant 1}}\left[\phi_+'\left(\frac{p}{q}\right),\phi_+'\left(\frac{p}{q}+\frac{s}{q^{1+\nu}}\right)\right]
\end{align}for any fixed $s>0$, $\nu\in(0,1)$ and $Q>1$. It can be checked from \eqref{eq:u-flat} that for fixed $s_0\in(0,2)$,
\begin{align*}
	(2-s_0)\left[\phi_+'\left(\frac{p}{q}+\frac{s_0}{q^{1+\nu}}\right)-\phi_+'\left(\frac{p}{q}\right)\right]&\leqslant \int_{s_0}^2\left[\phi_+'\left(\frac{p}{q}+\frac{s}{q^{1+\nu}}\right)-\phi_+'\left(\frac{p}{q}\right)\right]\,ds\\
	&\leqslant \int_{0}^2\left[\phi_+'\left(\frac{p}{q}+\frac{s}{q^{1+\nu}}\right)-\phi_+'\left(\frac{p}{q}\right)\right]\,ds<q^{1+\nu}u_q\left(\frac{2}{q^{1+\nu}}\right).
\end{align*}As a result, 
\begin{align*}
	\phi_+'\left(\frac{p}{q}+\frac{s_0}{q^{1+\nu}}\right)-\phi_+'\left(\frac{p}{q}\right)<\frac{q^{1+\nu}}{2-s_0}u_q\left(\frac{2}{q^{1+\nu}}\right). 
\end{align*}
By assumptions on $u_q$, given $s\in(0,1)$, for any $\delta>0$, there exists $Q_\delta\in\N^*$ depending on $\delta$, such that the diameter of the each member of the cover of $Y$ at the rightmost of \eqref{eq:cover} is less than $\delta$. Then for $\theta\in(0,1]$ and $s\in(0,1)$, 
\begin{align}
	\mathscr H_\delta^\theta\left(Y\right):=\inf\left\{\sum_{i=1}^\infty(\mathrm{diam}\,  C_i)^\theta:Y\subset \bigcup_{i=1}^\infty C_i,\,\mathrm{diam}\,  C_i<\delta \right\}\leqslant \sum_{q>Q_\delta, 0<\frac{p}{q}\leqslant 1}\left[\phi_+'\left(\frac{p}{q}+\frac{s}{q^{1+\nu}}\right)-\phi_+'\left(\frac{p}{q}\right)\right]^\theta. 
\end{align}
  Integrating both sides of the inequality above with respect to $s$ over $[0,1]$, we can get 
  \begin{align*}
  	\mathscr H_\delta^\theta\left(Y\right)\leqslant \sum_{q>Q_\delta, 0<\frac{p}{q}\leqslant 1}\int_0^1\left[\phi_+'\left(\frac{p}{q}+\frac{s}{q^{1+\nu}}\right)-\phi_+'\left(\frac{p}{q}\right)\right]^\theta\,ds. 
  \end{align*}
  by monotone convergence theorem. Since the function $x\mapsto x^\theta$ is concave and monotone increasing on $[0,+\infty)$ when $\theta\in(0,1]$, Jensen's inequality indicates that
  \begin{align}
  	\mathscr H_\delta^\theta\left(Y\right)&\leqslant \sum_{q>Q_\delta, 0<\frac{p}{q}\leqslant 1}\left[\int_0^1\phi_+'\left(\frac{p}{q}+\frac{s}{q^{1+\nu}}\right)\,ds-\phi_+'\left(\frac{p}{q}\right)\right]^\theta\notag\\
  	&\leqslant\sum_{q>Q_\delta, 0<\frac{p}{q}\leqslant 1}\left[q^{1+\nu}\left[\phi\left(\frac{p}{q}+\frac{1}{q^{1+\nu}}\right)-\phi\left(\frac{p}{q}\right)-\phi_+'\left(\frac{p}{q}\right)\frac{1}{q^{1+\nu}}\right]\right]^\theta\notag\\
  	&\leqslant \sum_{q>Q_\delta, 0<\frac{p}{q}\leqslant 1} q^{(1+\nu)\theta}u_q^\theta\left(\frac{1}{q^{1+\nu}}\right)=\sum_{q>Q_\delta} q^{(1+\nu)\theta+1}u_q^\theta\left(\frac{1}{q^{1+\nu}}\right).\label{eq:theta-convergence}
  \end{align} As $\delta\to 0^+$, we conclude by \eqref{eq:u-sum-finite-theta} and \eqref{eq:theta-convergence} that the $\theta$-dimensional Hausdorff outer measure of $Y$
  \begin{align*}
  	\mathscr H^\theta\left(Y\right):=\lim_{\delta\to 0^+}\mathscr H_\delta^\theta\left(Y\right)=0,\,\,\,\,\theta\in(0,1],
  \end{align*}which means the Hausdorff dimension of set $Y:=\{y:D^2\psi(y)\text{ does not exist}\}$ is 0. 
\end{proof}

\section{Incomplete devil's staircase in the presence of KAM tori}\label{SIncomplete}
In this section we show that the graph of $D\alpha$ cannot be a complete devil's staircase in the presence of KAM tori, and give the proof of Theorem \ref{ThmKAM}. 
\subsection{KAM torus}
\subsubsection{Some facts on KAM theory}\label{sec: kam}
Consider the nearly integrable Tonelli Hamiltonian $H:\T\times T^*\T^m\to\R$, which has the form
\begin{align}\label{eq:perturb-H}
	H(t,x,p):=H_0(p)+\varepsilon H_1(t,x,p). 
\end{align}
This is the small perturbation of an integrable system: when $\varepsilon=0$, the system is integrable for Hamiltonian $H_0$ and all orbits satisfy $\dot x=DH_0(p)=:\omega(p)$, thus are linear rotation $\gamma(t)=\gamma(0)+\omega(p)t$ with rotation vector $\rho(\gamma)=\omega(p)$ on $\T^m$. 

A vector (number) $\omega\in\R^m$ is called a ($(C,\tau)$-)\textit{Diophantine vector (number)}, if there exist $C>0$ and $\tau>0$ such that
\begin{align*}
	|\langle\omega,\mathbf k\rangle+l|\geqslant\frac{C}{|\mathbf k|^\tau}\,\,\,\,\forall \mathbf k\in\Z^m\setminus\{0\}, l\in\Z.
\end{align*}
The set of all $(C,\tau)$-Diophantine vectors (numbers) is denoted by $D(C,\tau)$. 

We state the following version of the KAM theorem.
\begin{The}
    Given rotation vector $\omega_0=DH_0(p_0)\in D(C,\tau)$, suppose that the Hessian is not degenerate at $p_0$: $\det(D^2H_0(p_0))\neq 0$, then 
there exists $\varepsilon_0>0$ such that when 
 $\|\varepsilon H_1\|_{C^r(\T\times T^*\T^m)}<\varepsilon_0$ for sufficiently large $r\in\N$ (depending on $m$)\label{it:ph1},
the Hamiltonian flow $\Phi_H$ admits an invariant torus $\mathcal T_{\omega_0}\subset \T\times T^*\T^m$ and $\Phi_H|_{\mathcal T_{\omega_0}}$ can be conjugated to the linear rotation with rotation vector $\omega_0$ on $\T^m$. More precisely, we can find a diffeomorphism $\Psi_{\omega_0}:\mathcal T_{\omega_0}\to \T\times\T^{m}(\times\{p_0\})$ fulfilling
\begin{align*}
	\Psi_{\omega_0}\circ\Phi_H^t|_{\mathcal T_{\omega_0}}\circ[\Psi_{\omega_0}]^{-1}(s,x)=(s+t,x+t\omega_0),\,\,\,\,\forall (s,x)\in\T\times\T^m,\,\,t\in\R.
\end{align*}
\end{The}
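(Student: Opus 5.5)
This is the classical KAM theorem. Rather than reprove it, we would follow the standard Kolmogorov–Arnold–Moser scheme, in Pöschel's or Salamon's formulation for time-periodic Hamiltonians, checking only that its hypotheses match those stated here.

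\emph{Step 1: setup.} We extend time to an angle variable, so that $\Phi_H$ is viewed on $\T\times T^*\T^m$ with frequency vector $(1,\omega_0)\in\R^{m+1}$. The Diophantine condition $|\langle\omega_0,\mathbf k\rangle+l|\geqslant C|\mathbf k|^{-\tau}$ is exactly the nonresonance condition for this extended vector. We then localize near $p_0$ by setting $p=p_0+P$. The nondegeneracy $\det D^2H_0(p_0)\neq0$ lets us use the momentum shift (the counterterm) to keep the frequency fixed at $\omega_0$ throughout the iteration.

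\emph{Step 2: Newton iteration.} At step $n$ the Hamiltonian has the form $N_n+R_n$, where $N_n=e_n+\langle\omega_0,P\rangle+O(P^2)$ and $R_n$ is the error. We solve the homological equation $\partial_t\chi+\langle\omega_0,\partial_x\chi\rangle=R_n-\langle R_n\rangle$ by Fourier series. The divisors $\langle\omega_0,\mathbf k\rangle+l$ are bounded below by $C|\mathbf k|^{-\tau}$, so the loss of derivatives is of order $\tau+m+1$. We conjugate by the time-one map of $\chi$ and use $D^2H_0(p_0)^{-1}$ to absorb the averaged linear term into a shift of $P$. The new error is quadratically small, at the cost of a loss of regularity.

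\emph{Step 3: convergence in finite smoothness.} This is the main obstacle. Since $H_1$ is only $C^r$, we would either approximate it by analytic functions via Jackson–Moser–Zehnder smoothing on a geometric sequence of strips, or interleave the iteration with smoothing operators as in Nash–Moser. Quadratic convergence beats the polynomial loss $|\mathbf k|^{\tau}$ provided $r$ is large compared with $\tau+m$. This is the source of the phrase "$r$ sufficiently large depending on $m$", and smallness $\varepsilon_0$ is taken relative to $C$, $\tau$ and $\|D^2H_0(p_0)^{-1}\|$.

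\emph{Step 4: conclusion.} The composition of the conjugations converges in $C^1$, or better, to a symplectic map $\Phi_\infty$ sending $H$ to $e+\langle\omega_0,P\rangle+O(P^2)$. The torus $\{P=0\}$ is invariant for this normal form and carries the linear flow $(s,x)\mapsto(s+t,x+t\omega_0)$. We define $\mathcal T_{\omega_0}=\Phi_\infty(\T\times\T^m\times\{0\})$ and take $\Psi_{\omega_0}=\Phi_\infty^{-1}$ restricted to $\mathcal T_{\omega_0}$, which gives the stated conjugacy.

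For the later application with $m=1$, we would also record that the conjugacy is $C^2$ when $r$ is large, and that $\mathcal T_{\omega_0}$ is a Lipschitz graph over $\T\times\T^m$.
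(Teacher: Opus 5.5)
The paper gives no proof of this statement; it quotes it as the classical KAM theorem, so your plan of invoking the standard scheme in P\"oschel's or Salamon's form takes the same approach, and your outline is a faithful sketch of that scheme. That outline consists of the extended frequency $(1,\omega_0)$, the momentum shift made possible by $\det D^2H_0(p_0)\neq 0$, smoothing for finite $r$, and $\Psi_{\omega_0}$ as the inverse of the limiting conjugacy on $\{P=0\}$. Your Step 3 is slightly more precise than the statement: the smoothness threshold on $r$ depends on $\tau$, not on $m$ alone, and the scheme tacitly needs $H_0$ itself to be sufficiently smooth near $p_0$.
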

Such invariant torus $\mathcal T_{\omega_0}$ is called a \textit{KAM torus} with rotation vector $\omega_0$. Recall that for Tonelli Hamiltonian systems, each KAM torus is the Mather set associated to some corresponding cohomology, which means $\mathcal T_{\omega_0}$ is a Lipschitz graph. This implies that the inverse of canonical projection $\pi|_{\mathcal T_{\omega_0}}^{-1}: \T\times \T^m\to \mathcal T_{\omega_0}\subset \T\times T^*\T^m$ is well-defined. Consequently, $\varphi_{\omega_0}:=\Psi_{\omega_0}\circ\pi|_{\mathcal T_{\omega_0}}^{-1}$ is a diffeomorphism of $\T\times\T^m$.

Given a Diophantine vector $h$ and assume that $\tilde{\mathscr M}_h$ is a KAM torus. Let $\hat H_h:=\varphi_h^*H$ be the pull-back of Hamiltonian $H$  of the canonical transformation $\varphi_h$ and $\hat L_h$ be the Lagrangian associated to $\hat H_h$. 
Since for $x\in\T^m$, $\gamma_x(t):=x+th$ is the solution of Euler-Lagrange flow $\Phi_{\hat L_h}$. It indicates that
 \begin{align*}
 	\frac{d}{dt}\left[\frac{\partial \hat L_h}{\partial v}(t,x+th,h)\right]=\frac{\partial \hat L_h}{\partial x}(t,x+th,h).
 \end{align*}
Due to the fact that $h$ is Diophantine, $\gamma_x$ is dense in $\T^m$, which means on KAM torus with rotation vector $h$, 
 \begin{enumerate}[\rm (1)]
 	
 	\item $\frac{\partial \hat L_h}{\partial v}(\cdot,\cdot,h)$ is a constant map from $\T\times\T^m$ to $\R^m$, which is denoted by $\eta_h\in\R^m$;
 	\item $\hat L_h(t,\cdot,h)$ is a constant and we denote by $\hat L_h(t,h)$ for short.
 \end{enumerate}
 Note that the conjugate transformation $\Psi_h$ in KAM theorem is obtained by a sequence of compositions of  exact symplectic transformations defined on smaller and smaller neighborhoods of the KAM torus. If we denote $\hat\beta_h$ by the the $\beta$-function associated to $\hat L_h$, then $\hat\beta_h=\beta$ due to Bernard (\cite{MR2309170}).  

 \subsubsection{KAM torus causes strong convexity of $\beta$-functions}In the following, we focus on the case $m=1$. 
\begin{The}\label{thm:strong-convex}
	 For any $h\in D(C,\tau)$, there exist $\delta_h, c_{h}>0$, such that
	\begin{align}\label{eq:hatbeta}
		\beta(\rho)-\beta(h)-D\beta(h)(\rho-h)\geqslant c_{h}(\rho-h)^2,\,\,\,\, \rho\in B(h,\delta_h).
	\end{align}
\end{The}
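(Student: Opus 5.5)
We will work in the coordinates given by the KAM conjugacy: replace $L$ by $\hat L_h$, which is legitimate because $\hat\beta_h=\beta$ (Bernard). In these coordinates the KAM circle is $\{v=h\}$, and the curves $\gamma_x(t)=x+th$ are solutions. By the two facts listed above, $\partial_v\hat L_h(t,x,h)\equiv\eta_h$ and $\hat L_h(t,x,h)\equiv\hat L_h(t,h)$. Averaging in $t$ gives $\beta(h)=\int_0^1\hat L_h(t,h)\,dt$. The plan is to show that $\eta_h=D\beta(h)$, and then to obtain the quadratic lower bound by a Taylor expansion in $v$ around $v=h$ combined with an estimate on how far minimizers of rotation number $\rho$ can stray from the circle.

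The key steps are as follows.

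First, define the modified Lagrangian
\[
\ell(t,x,v):=\hat L_h(t,x,v)-\hat L_h(t,h)-\eta_h(v-h).
\]
This function vanishes, together with its $v$-derivative, along $v=h$. Strict convexity in $v$ (Tonelli, $D^2_{vv}\hat L_h\geqslant 2c>0$ on a compact set) then gives $\ell\geqslant c\,(v-h)^2$ for $|v-h|$ bounded.

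Second, for any $\rho$-minimizer $\gamma$ with $|\rho-h|<\delta_h$, the velocities stay in a compact set. This follows from a priori compactness, since $\tilde{\mathscr M}_\rho$ is a uniformly Lipschitz graph near the circle. Using \eqref{eq:beta-2}, we get
\[
\beta(\rho)=\lim\frac1{2T}\int_{-T}^{T}\hat L_h\,dt=\beta(h)+\eta_h(\rho-h)+\lim\frac1{2T}\int_{-T}^{T}\ell\,dt .
\]
Here we use that the mean of $\dot\gamma$ is $\rho$ and that the mean of $\hat L_h(t,h)$ is $\beta(h)$, which holds because $t\mapsto\hat L_h(t,h)$ is $1$-periodic.

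Third, apply Jensen/Cauchy–Schwarz:
\[
\frac1{2T}\int \ell\,dt\geqslant c\,\frac1{2T}\int(\dot\gamma-h)^2\,dt\geqslant c\left(\frac1{2T}\int(\dot\gamma-h)\,dt\right)^2\to c(\rho-h)^2 .
\]
This yields $\beta(\rho)\geqslant\beta(h)+\eta_h(\rho-h)+c(\rho-h)^2$ for all $\rho$ near $h$.

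Fourth, identify $\eta_h$ with $D\beta(h)$. The inequality just proved shows that $\eta_h\in D^-\beta(h)$. Since $h$ is irrational, $\beta$ is differentiable at $h$ by Theorem \ref{thm:beta-striconvex}, so $\eta_h=D\beta(h)$. This gives \eqref{eq:hatbeta} with $c_h=c$.

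The main obstacle is the uniform lower bound $\ell\geqslant c(v-h)^2$, which requires controlling the range of $v$ along minimizers of rotation number $\rho$. The Taylor/convexity argument only needs $v$ to lie in a compact set on which $D^2_{vv}\hat L_h$ is bounded below. This is guaranteed globally by the Tonelli property, provided the pulled-back Lagrangian $\hat L_h$ is still Tonelli, or at least convex in $v$ on the relevant region. That is precisely what the conjugacy $\varphi_h$ must preserve. I would first try to argue directly that $\varphi_h$ is a fibre-preserving change of the base variable, $x\mapsto\varphi_h(t,x)$, so that convexity in $v$ is preserved, with $D^2_{vv}$ scaled by $(\partial_x\varphi_h)^{-2}$, which is bounded because $\varphi_h$ is a $C^2$ diffeomorphism of a compact set. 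If this route fails, I would fall back on working with the original $L$: there one shifts the minimizer against the Lipschitz graph of the KAM circle, and the same convexity estimate goes through with $\varphi_h$ used only as a change of time-parametrization of configurations.
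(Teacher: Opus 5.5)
Your main line (Steps 1--3) is the paper's proof: expand $\hat L_h$ around $v=h$ using the two constancy facts, then apply Jensen along minimizers. Your variants are fine. You treat arbitrary $\rho$ via \eqref{eq:beta-2} instead of periodic minimizers plus continuity. You bound velocities by a priori compactness instead of upper semicontinuity of the Ma\~n\'e set. You get $\eta_h=D\beta(h)$ from $\eta_h\in D^-\beta(h)$ and differentiability at irrationals, which replaces the paper's sandwich $\hat L_h^-\leqslant\hat\beta_h\leqslant\hat L_h^+$ and removes the need for the upper bound with $C_h$. The real problem is your plan for the obstacle you single out. The paper does not address it; it just asserts that $\hat L_h$ has positive definite Hessian.

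If $\varphi_h$ is a fibre-preserving change $x\mapsto\phi_t(x)$, then $\hat L_h$ is Tonelli and $\hat\beta_h=\beta$ is trivial. But you lose the two facts that make $\ell$ vanish to second order on $\{v=h\}$. Here is a counterexample. Take $L=\frac12v^2-V(x)$, $E>\max V$, the invariant circle $v=\sqrt{2(E-V)}$ with rotation number $h$, and the linearizing coordinate $\phi(x)=h\int_0^x(2(E-V(s)))^{-1/2}\,ds$, normalized so that $\phi(x+1)=\phi(x)+1$. With $\chi=\phi^{-1}$ and $U=V\circ\chi$ one gets $\hat L(y,w)=\frac{E-U(y)}{h^2}w^2-U(y)$. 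Hence $\hat L(y,h)=E-2U(y)$ and $\partial_w\hat L(y,h)=2(E-U(y))/h$ are not constant, even though $y_0+th$ does solve the Euler--Lagrange equation of $\hat L$. So these facts do not follow from that equation and density alone. They need a genuinely symplectic normal form in which the circle becomes $\{\hat p=\eta_h\}$, and there global convexity of $\hat H_h$ (and applicability of Bernard's invariance) is exactly what is not automatic.

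Your fallback, made precise, does work. Stay with the original $L$, write the KAM circle as the graph $v=V(t,x)$, and set $P=\partial_vL(t,x,V)$.
\begin{enumerate}
\item Convexity on the a priori compact set gives $L(t,x,\dot x)\geqslant P\dot x-H(t,x,P)+\kappa(\dot x-V)^2$.
\item Invariance of the graph means $\partial_tP+\partial_x[H(t,x,P)]=0$, so $P\,dx-H(t,x,P)\,dt$ is closed on $\T^2$.
\item Along the circle this $1$-form integrates to the action, whose average is $\beta(h)$. Hence its average along any curve of rotation number $\rho$ is $\beta(h)+c(\rho-h)$, with $c=\int_0^1P\,dx$.
\item The conjugacy $y=\phi_t(x)$ gives $\dot y-h=\partial_x\phi_t(x)(\dot x-V)$. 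Jensen applied to $\dot y-h$ then yields $\liminf\frac1{2T}\int_{-T}^T(\dot x-V)^2\,dt\geqslant(\rho-h)^2/\|\partial_x\phi\|_\infty^2$.
\end{enumerate}
This proves the estimate with $c_h=\kappa/\|\partial_x\phi\|_\infty^2$, without $\hat L_h$ or Bernard's theorem.
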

\begin{proof}
	Since $\hat\beta_h=\beta$, we will focus on the analysis on Lagrangian $\hat L_h$. Because $\hat L_h$ has a positively definite Hessian with respect to component $v$,  there exist $C_{h}>c_{h}>0$, satisfying for all $|w|<1$, 
	\begin{align*}
		c_{h}|w|^2\leqslant \frac{\partial^2\hat L_h}{\partial v^2}(t,x,h+w)(w,w)\leqslant C_{h}|w|^2.
	\end{align*}
According to Taylor's expansion of $\hat L_h$,
\begin{align*}
	\hat L_h(t,x,h+w)=\hat L_h(t,h)+\eta_h(w)+\frac{1}{2}\frac{\partial^2\hat L}{\partial v^2}(t,x,h+\theta w)(w,w),\,\,\,\,\theta\in[0,1],
\end{align*}we have the following estimate
\begin{align}\label{eq:order2-esti}
\hat L_h(t,h)+\eta_h(w)+\frac{c_{h}}{2}|w|^2\leqslant\hat L_h(t,x,h+w)\leqslant\hat L_h(t,h)+\eta_h(w)+\frac{C_{h}}{2}|w|^2.
\end{align}
Assume that $p,q\in\Z$ and $\gamma:\R\to\T$ is a closed global minimal curve with rotation number $\rho(\gamma)=p/q$, and denote $\tilde\gamma$ as the lift of curve $\gamma$ from $\T$ to $\R$ as in the previous text. Then  $\tilde\gamma(t+q)=\tilde\gamma(t)+p$ for all $t\in\R$. By choosing the appropriate $p/q$, we let $p/q\in\Q$ satisfy $|p/q-h|<\delta$.


Since $d\gamma\subset\tilde {\mathscr N}(p/q)$, when $|p/q-h|<\delta(1,h)$, $\tilde{\mathscr N}(p/q)\subset B(\tilde{\mathscr N}(h),1)$  due to the upper-semicontinuity of Ma\~n\'e sets. The comparison of the average actions of $\tilde\gamma$ on $[0,q]$ can be obtained by \eqref{eq:order2-esti} from
\begin{align*}
	\frac{1}{q}\int_0^{q}\hat L_h(t,\tilde\gamma(t),\dot{\tilde\gamma}(t))\,dt&\geqslant\frac{1}{q}\int_0^{q}\left[\hat L_h(t,h)+\eta_h(\dot{\tilde\gamma}(t)-h)+\frac{c_{h}}{2}|\dot{\tilde\gamma}(t)-h|^2\right]\,dt\\
	&\geqslant\frac{1}{q}\int_0^{q}\hat L_h(t,h)\,dt+\eta_h\left(p/q-h\right)+\frac{c_{h}}{2}\left|p/q-h\right|^2\\
	&=\int_0^{1}\hat L_h(t,h)\,dt+\eta_h\left(p/q-h\right)+\frac{c_{h}}{2}\left|p/q-h\right|^2
\end{align*}
where the last inequality and equality hold by Jensen's inequality and the fact that $\hat L_h$ is an 1-time periodic Lagrangian. Taking minimum over all global minimal curves with rotation vector $p/q$ on the left hand side of the above inequality, we can get
\begin{align}\label{eq:hatbeta-}
	\hat\beta_h\left(p/q\right)\geqslant \int_0^{1}\hat L_h(t,h)\,dt+\eta_h\left(p/q-h\right)+\frac{c_{h}}{2}\left|p/q-h\right|^2=:\hat L_h^-\left(p/q\right).
\end{align}
In addition, after taking $\tilde{\gamma}_0(t):=x+p/qt$, we have for $|p/q-h|<\delta$,
\begin{align}\label{eq:hatbeta+}
	\hat\beta_h\left(p/q\right)\leqslant\frac{1}{q}S(\tilde\gamma_0|_{[0,q]})&=\frac{1}{q}\int_0^{q}\hat L_h\left(t,p/q\right)\,dt\notag\\
	&\leqslant \int_0^{1}\hat L_h(t,h)\,dt+\eta_h(p/q-h)+\frac{C_{h}}{2}\left|p/q-h\right|^2=:\hat L_h^+\left(p/q\right).
\end{align}
Together with \eqref{eq:hatbeta-}-\eqref{eq:hatbeta+} and the continuity of $\hat\beta$ and $\hat L^{\pm}$, we have 
\begin{align}\label{eq:hat-beta-control}
	\hat L_h^-(\rho)\leqslant\hat\beta_h(\rho)\leqslant\hat L_h^+(\rho),\,\,\,\,\forall |\rho-h|<\delta.
\end{align}
Let $\rho=h$, we have $\hat\beta_h(h)=\int_0^1\hat L_h(t,h)\,dt$. Furthermore, \eqref{eq:hat-beta-control} implies $D\hat\beta_h(h)=\eta_h$ and hence \eqref{eq:hatbeta} holds true by $\hat\beta_h=\beta$.
\end{proof}

\subsection{Incomplete devil's staircase}
The following result indicates that if a convex function has strong convexity on a certain positive measure set as in the previous theorem, then the differential of its convex conjugate as a continuous function has a non-trivial absolutely continuous part. Theorem \ref{ThmKAM} then follows. 
\begin{Pro}\label{pro:incomplete}
	 Suppose $\phi$ is a strictly convex function on $\R$ and $\psi$ is its convex conjugate. Given $P\subset\R$ with positive Lebesgue measure, if for any $x_0\in P$, there exist $c_{x_0},\delta_{x_0}>0$ such that when $x\in B(x_0,\delta_{x_0})$, 
	\begin{align}\label{eq:strong-convex}
		\phi(x)-\phi(x_0)-y_0(x-x_0)\geqslant c_{x_0}|x-x_0|^2,\,\,\,\,\forall y_0\in D^-\phi(x_0),
	\end{align}then $D\psi\in C(\R)$ (see Appendix \ref{sec:convex}) has a Lipschitz part on a positive Lebesgue measure set and thus is an incomplete devil's staircase. 
\end{Pro}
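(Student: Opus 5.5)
The plan is to translate the pointwise quadratic lower bound \eqref{eq:strong-convex} into a pointwise Lipschitz (upper difference quotient) bound for $D\psi$ at the points $y_0=D\phi(x_0)$. Then we show, via a measure-theoretic pigeonhole, that these points fill a set of positive measure on which $D\psi$ is Lipschitz. First note that for $x_0\in P$ the subdifferential $D^-\phi(x_0)$ must be a single point: if $y_0<y_0'$ both lie in $D^-\phi(x_0)$, then applying \eqref{eq:strong-convex} with $y_0$ for $x<x_0$ and with $y_0'$ for $x>x_0$ is harmless. Uniformity in $y_0$, however, follows from convexity only when $y_0$ is extremal. So instead of claiming singleton subdifferentials, we work with the map $D\psi$, which is continuous and non-decreasing because $\phi$ is strictly convex (Appendix \ref{sec:convex}), and satisfies $D\psi(y)=x$ iff $y\in D^-\phi(x)$.

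\textbf{Step 1 (pointwise Lipschitz bound).} Fix $x_0\in P$ and $y_0\in D^-\phi(x_0)$, so $D\psi(y_0)=x_0$. Let $y$ be any point and set $x=D\psi(y)$, so $y\in D^-\phi(x)$. Suppose $|x-x_0|<\delta_{x_0}$. Then the subgradient inequality at $x$ gives $\phi(x_0)\ge \phi(x)+y(x_0-x)$, while \eqref{eq:strong-convex} gives $\phi(x)\ge\phi(x_0)+y_0(x-x_0)+c_{x_0}|x-x_0|^2$. Adding the two yields $(y-y_0)(x-x_0)\ge c_{x_0}|x-x_0|^2$, hence
\[|D\psi(y)-D\psi(y_0)|\le c_{x_0}^{-1}|y-y_0|.\]
By continuity of $D\psi$ at $y_0$, the condition $|x-x_0|<\delta_{x_0}$ holds for all $y$ in a neighbourhood of $y_0$. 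Consequently $\limsup_{y\to y_0}|D\psi(y)-D\psi(y_0)|/|y-y_0|\le c_{x_0}^{-1}$ at every $y_0\in Y_P:=\bigcup_{x_0\in P}D^-\phi(x_0)=(D\psi)^{-1}(P)$.

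\textbf{Step 2 (positive measure).} Write $P=\bigcup_{n}P_n$, where $P_n=\{x_0\in P:c_{x_0}\ge 1/n,\ \delta_{x_0}\ge 1/n\}$. Some $P_n$ has positive outer measure. Set $Y_n=(D\psi)^{-1}(P_n)$. If $\mathscr L(Y_n)$ were $0$, then $D\psi(Y_n)=P_n$ would be the image of a null set under a map whose upper derivative at every point of $Y_n$ is at most $n$. By the standard lemma that a function with bounded Dini derivative on a set $E$ maps $E$ to a set of measure at most $n\,\mathscr L^*(E)$, this image would be null, a contradiction. Hence $\mathscr L^*(Y_n)>0$. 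The same lemma, applied on pieces of $Y_n$, also gives that $D\psi$ restricted to $Y_n$ has upper derivative at most $n$ everywhere. Refining further by also bounding the neighbourhood size in Step 1 uniformly, we get that on any subinterval of length below a uniform threshold, $D\psi|_{Y_n}$ is genuinely $n$-Lipschitz. In particular $D\psi$ is differentiable a.e.\ on $Y_n$ with $D^2\psi\ge n^{-1}\cdot 0$ and, more importantly, $\mathscr L(D\psi(Y_n))>0$.

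\textbf{Step 3 (incompleteness).} A singular continuous non-decreasing function maps some full-measure set onto a null set; equivalently, its distributional derivative is concentrated on a null set. Here $D\psi$ pushes $Y_n$ onto a positive-measure set while being Lipschitz there. The measure $d(D\psi)$ therefore restricted to $Y_n$ is absolutely continuous (by the Lipschitz/Vitali covering estimate) and nonzero. So $D\psi$ has a nontrivial absolutely continuous part, and is an incomplete devil's staircase.

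The main obstacle is Step 2: passing from pointwise Dini bounds with point-dependent constants to a positive-measure set carrying a uniform Lipschitz bound. The first thing to try is the countable decomposition in $(c_{x_0},\delta_{x_0})$ combined with the Vitali-type image-measure lemma for Dini derivatives, together with care about measurability of $P_n$ (using outer measure throughout).
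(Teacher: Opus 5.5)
Your proof is correct. Step 1 matches the paper's argument: adding the subgradient inequality at $x=D\psi(y)$ to the quadratic lower bound gives $|D\psi(y)-D\psi(y_0)|\le c_{x_0}^{-1}|y-y_0|$.

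The measure-theoretic conclusion takes a different route. The paper splits $(D\psi)^{-1}(P)$ into sets $E_k$ on which $D\psi$ is genuinely Lipschitz, and chooses $k_0$ with $\mathscr L(D\psi(E_{k_0}))>0$. It then applies Rademacher's theorem and the area formula to get $\int_{E_{k_0}}D^2\psi\,dy\ge\mathscr L(D\psi(E_{k_0}))>0$.

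You instead split $P$ itself according to the size of $(c_{x_0},\delta_{x_0})$. You then use the covering lemma $\mathscr L^*(f(E))\le M\,\mathscr L^*(E)$ for maps with upper Dini bound $M$ on $E$; its standard proof is essentially the paper's $E_k$ decomposition. Finally you argue by contradiction with the characterization of singular monotone functions.

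What each approach buys:
\begin{itemize}
\item Your version works with outer measure throughout. So you never need measurability of the pieces, which the area formula tacitly requires, and you get the explicit bound $\mathscr L^*(Y_n)\ge\mathscr L^*(P_n)/n$.
\item The paper's version exhibits the absolutely continuous mass directly, as $\int D^2\psi$ over a concrete Lipschitz set.
\end{itemize}

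When you write up Step 3, make the contradiction explicit. Let $\mu=d(D\psi)$, and suppose $\mu$ were carried by a Borel null set $N$. Then $D\psi(\R\setminus N)$ is null, because $\mathscr L^*(D\psi(B))\le\mu(B)$ for Borel $B$; this follows from outer regularity of $\mu$ and continuity of $D\psi$. Also $D\psi(Y_n\cap N)$ is null by the covering lemma, since $Y_n\cap N$ is null. Hence $P_n=D\psi(Y_n)$ would be null, a contradiction.

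Three smaller fixes:
\begin{itemize}
\item Drop the opening claim that $D^-\phi(x_0)$ is a singleton. It is false: for $\phi(x)=|x|+x^2$ the hypothesis holds at $x_0=0$ with $D^-\phi(0)=[-1,1]$. It is also unused.
\item Remove the garbled sentences in Step 2, such as the one claiming that the same lemma gives the upper derivative bound, and $D^2\psi\ge n^{-1}\cdot 0$.
\item The uniform Lipschitz scale on $Y_n$ needs no further decomposition. A first-exit argument using continuity of $D\psi$ gives $|D\psi(y)-D\psi(y_0)|\le n|y-y_0|$ for every $y_0\in Y_n$ and every $y$ with $|y-y_0|<n^{-2}$.
\end{itemize}
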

\begin{proof}
First we set $y_0\in (D\psi)^{-1}(P)$, then $x_0:=D\psi(y_0)\in P$ and $y_0\in D^-\phi(x_0)$. Due to the continuity of $D\psi$, we can find some $\varepsilon>0$ (depending on $x_0$ in general), such that if $|y-y_0|<\varepsilon$, $|D\psi(y)-D\psi(y_0)|<\delta_{x_0}$. Let $x=D\psi(y)$, then $x\in B(x_0,\delta_{x_0})$. In view of \eqref{eq:strong-convex}, we obtain
\begin{align}\label{eq:Dpsi-lip-1}
	c_{x_0}|x-x_0|^2\leqslant (y-y_0)(x-x_0)\leqslant|y-y_0||x-x_0|
\end{align}
by \eqref{eq:concave}. Recall that $D\psi(y)=x$, $D\psi(y_0)=x_0$. When $x\neq x_0$, \eqref{eq:Dpsi-lip-1} gives that for $y\in B(y_0,\varepsilon)$,
\begin{align*}
	|D\psi(y)-D\psi(y_0)|\leqslant\frac{1}{c_{x_0}}|y-y_0|.
\end{align*}
As a result, for any $y_0\in (D\psi)^{-1}(P)$,
\begin{align}\label{eq:stepanov}
	\limsup_{y\to y_0}\frac{|D\psi(y)-D\psi(y_0)|}{|y-y_0|}<\infty.
\end{align}

For $k\in\N^*$, define the set
\begin{align*}
	E_k:=\left\{y\in (D\psi)^{-1}(P):\forall y'\in (D\psi)^{-1}(P) \text{ with } 0<|y-y'|<\frac{1}{k}, \frac{|D\psi(y)-D\psi(y')|}{|y-y'|}\leqslant k \right\}.
\end{align*}
\eqref{eq:stepanov} indicates that $(D\psi)^{-1}(P)=\bigcup\limits_{k=1}^\infty E_k$. Thus, 
\begin{align*}
	P=D\psi\left((D\psi)^{-1}(P)\right)=\bigcup_{k=1}^\infty D\psi(E_k).
\end{align*}
Since $\mathscr L(P)>0$ by assumption, there exists at least one $k_0\in\N$, $\mathscr L(D\psi(E_{k_0}))>0$.

Moreover, $D\psi$ is (locally) Lipschitz on $E_k\subset (D\psi)^{-1}(P)$: when $|y'-y| <1/k$, $|D\psi(y')-D\psi(y)|\leqslant k|y'-y|$; if $|y'-y|\geqslant 1/k$, $|D\psi(y')-D\psi(y)|\leqslant 2B\leqslant 2Bk|y'-y|$, where $B$ is a local upper bound of $|D\psi|$. According to Rademacher theorem and the convexity of $\psi$, $D^2\psi\geqslant 0$ exists a.e. on $E_k$. 

In addition, area formula (see \cite{Evans_Gariepy_book}) implies that 
\begin{align*}
	\int_{E_{k_0}}D^2\psi(y)\,dy=\int_{\R}\mathcal \#\left(E_{k_0}\cap (D\psi)^{-1}(x)\right)\,dx\geqslant\mathscr L(D\psi(E_{k_0}))>0,
\end{align*}which means $D^2\psi$ does not vanish on a subset of $E_{k_0}$ with positive Lebesgue measure.
\end{proof}

\appendix 
\section{Some  convex analysis}\label{sec:convex}Some relevant basic notions about convex analysis are listed below. For the following and more, readers can refer to \cite{Cannarsa_Sinestrari_book,Hiriart-Urruty_Lemarechal_book2001}.
\begin{enumerate}[\textbullet]
	\item Let $I\subset\R$ be an open interval. A continuous function $\phi:I\to\R$ is called a \emph{convex function} if  for any $x\in I$ there exists $y\in\R$ such that
	\begin{equation}\label{eq:concave}
	    \phi(x')\geqslant\phi(x)+y(x'-x),\qquad \forall x'\in I,
	\end{equation}
	and we denote as $\phi\in\Cov(I)$. The set of $y$ satisfying \eqref{eq:concave} is called the \emph{subdifferential of $\phi$ at $x$} and we denote it by $D^-\phi(x)$.
	\item If $\phi\in\Cov(I)$,  then $\phi$ is locally Lipschitz on $I$, thus it is differentiable almost everywhere (in the sense of Lebesgue measure). Moreover, $D^-\phi(x)$ is a non-empty, compact and convex set for any $x\in\R$. $D^-\phi(x)$ is a singleton if and only if $\phi$ is differentiable at $x$. In this case, $D^-\phi(x)=\{D\phi(x)\}$. 
	\item The \textit{left} and \textit{right derivative }of $\phi$ 	\begin{align}\label{eq:direct-deriv}
		\phi_{-}'(x):=\lim_{t\to 0^+}\frac{\phi(x)-\phi(x-t)}{t},\,\,\,\,\phi_{+}'(x):=\lim_{t\to 0^+}\frac{\phi(x+t)-\phi(x)}{t}
	\end{align}exist at all $x\in I$ and $D^-\phi(x)=[\phi_-'(x),\phi_+'(x)]$.
	\item As a set-valued map, $x\rightsquigarrow D^-\phi(x)$ is upper-semicontinuous, i.e., both $x_n\to x$ and $D^-\phi(x_n)\ni y_n\to y$ as $n\to\infty$ imply $y\in D^-\phi(x)$. Therefore, if $\phi$ is differentiable on $I$, $\phi\in C^1(I)$.
	\item  For $\phi\in\Cov(I)$, its \textit{convex conjugate} 
	\begin{align*}
		\psi(y):=\sup\{yx-\phi(x):x\in I\}
	\end{align*} 
	is also convex and $y\in D^-\phi(x)$ if and only if $x\in D^-\psi(y)$ if only if $\phi(x)+\psi(y)=xy$. Furthermore, if $\phi$ is strictly convex, i.e., only the strict inequality holds in \eqref{eq:concave} when $x'\neq x$, then $D^-\psi$ is singleton everywhere and thus $\psi$ is of class $C^1$ on its domain.
\end{enumerate}
\section{Twist maps}\label{sec:twist map}
We provide the definition of twist maps here, though we do not use it in the main body of the paper. 
\begin{defn}[Twist map]
	A map $\Gamma=(\Gamma_x,\Gamma_p)\in C^1(T^*\T,T^*\T)$ is said to be an \textit{area-preserving exact twist map} (twist map for short), if it satisfies the following conditions
	\begin{enumerate}[\rm (1)]
		\item (area-preserving) $\Gamma^*\omega=\omega$, where $\omega:=dp\wedge dx$;
		\item (twist condition) there exists some $b>0$, $\frac{\partial}{\partial p}\Gamma_x(x,p)\geqslant b$;
		\item (exactness) $\Gamma^*\lambda_0-\lambda_0$ is exact, where $\lambda_0:=pdx$.
	\end{enumerate}
\end{defn}
\begin{The}
	There exists a generating function $h(x,x')$ satisfying
	\begin{enumerate}[\rm (1)]
		\item $h(x+1,x'+1)=h(x,x')$, $x\in\T$;
		\item $\partial_{12}^2h\leqslant -b^{-1}<0$;
		\item $dh(x,x')=p'dx'-pdx$, where $(x',p')=\Gamma(x,p)$.
	\end{enumerate}
\end{The}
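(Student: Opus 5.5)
The plan is the classical construction: produce a primitive from exactness and use the twist condition to change variables from $(x,p)$ to $(x,x')$.

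First I would lift $\Gamma$ to a $C^1$ map $\tilde\Gamma=(\tilde\Gamma_x,\Gamma_p)$ of $\R^2$ satisfying $\tilde\Gamma(x+1,p)=\tilde\Gamma(x,p)+(1,0)$. By exactness there is a $C^1$ function $S$ on $T^*\T$ with $\Gamma^*\lambda_0-\lambda_0=dS$. Lifting $S$ gives a $1$-periodic function of $x$ on $\R^2$, and in lifted coordinates $dS=p'\,dx'-p\,dx$ with $(x',p')=\tilde\Gamma(x,p)$.

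Next I would show that the map $\Theta(x,p):=(x,\tilde\Gamma_x(x,p))$ is a $C^1$ diffeomorphism of $\R^2$. For fixed $x$, the twist condition $\partial_p\tilde\Gamma_x\geqslant b>0$ makes $p\mapsto\tilde\Gamma_x(x,p)$ strictly increasing. It also gives $|\tilde\Gamma_x(x,p)-\tilde\Gamma_x(x,0)|\geqslant b|p|$, so this map is onto $\R$. Hence $\Theta$ is a bijection with Jacobian determinant $\partial_p\tilde\Gamma_x\neq0$, and the inverse function theorem gives a $C^1$ inverse $(x,x')\mapsto(x,P(x,x'))$. The equivariance of $\tilde\Gamma$ gives $P(x+1,x'+1)=P(x,x')$. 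I would then define
$$h(x,x'):=S\bigl(x,P(x,x')\bigr).$$
This $h$ has property (1) by periodicity of $S$ and $P$.

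Pulling back the identity $dS=p'\,dx'-p\,dx$ by $\Theta^{-1}$ gives $dh=p'\,dx'-p\,dx$, which is property (3). In particular $\partial_1h=-P(x,x')$ and $\partial_2h=\Gamma_p(x,P(x,x'))$. Both are $C^1$, so $h\in C^2$. Differentiating $\partial_1h$ in $x'$ and using $\partial_{x'}P=(\partial_p\tilde\Gamma_x)^{-1}$ gives
$$\partial^2_{12}h(x,x')=-\frac{1}{\partial_p\tilde\Gamma_x(x,P(x,x'))}<0.$$

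I expect the quantitative sign bound in (2) to be the main obstacle. The lower bound $\partial_p\Gamma_x\geqslant b$ only yields $\partial_{12}^2h\in[-b^{-1},0)$. The stated inequality $\partial_{12}^2h\leqslant-b^{-1}$ requires an upper bound $\partial_p\Gamma_x\leqslant b$. So I would either read the twist constant as a two-sided bound $b^{-1}\leqslant\partial_p\Gamma_x\leqslant b$, which is a common normalization and under which the formula gives $\partial_{12}^2h\leqslant-b^{-1}$, or record (2) in the weaker form $\partial_{12}^2h<0$, uniform on compact sets. The weaker form is all that the Aubry--Mather theory invoked in the paper uses.
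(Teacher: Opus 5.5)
The paper states this result without proof: it is recalled as background in the appendix on twist maps and never used, since the main text passes directly to a Tonelli Hamiltonian via Moser's theorem. So there is no argument in the paper to compare yours with. Your construction is the classical one, and it is correct:
\begin{itemize}
\item the lower twist bound makes $\Theta$ a global $C^1$ diffeomorphism of $\R^2$;
\item $h=S\circ\Theta^{-1}$ is $C^2$, with $\partial_1h=-P$ and $\partial_2h=\Gamma_p(x,P)$;
\item (1) follows from $P(x+1,x'+1)=P(x,x')$ together with $1$-periodicity of $S$ in $x$. That periodicity is exactly where exactness on the cylinder, rather than merely on the cover, enters.
\end{itemize}

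Your diagnosis of (2) is right, and it can be stated more sharply. Property (3) forces $\partial_{12}^2h=-1/\partial_p\tilde\Gamma_x(x,P)$ for every generating function. Hence (2) as written is equivalent to $\partial_p\Gamma_x\leqslant b$, and under the stated hypothesis $\partial_p\Gamma_x\geqslant b$ it holds only when $\partial_p\Gamma_x\equiv b$. For example, $\Gamma(x,p)=(x+2p,p)$ with $b=1$ satisfies the definition. Its generating function is $h=(x'-x)^2/4$, and $\partial_{12}^2h=-1/2>-1=-b^{-1}$. The correct conclusion from the given hypothesis is $-b^{-1}\leqslant\partial_{12}^2h<0$. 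Your two-sided normalization is the natural repair: the lower bound is what makes $h$ defined on all of $\R^2$, and the upper bound is what yields (2).

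You should also make explicit a hypothesis you use silently. The equivariance $\tilde\Gamma(x+1,p)=\tilde\Gamma(x,p)+(1,0)$ says that $\Gamma$ has degree one, i.e.\ is isotopic to the identity. The paper's definition omits this, but (1) genuinely needs it. For instance, $\Gamma(x,p)=(-x+p,-p)$ has the following properties:
\begin{itemize}
\item it is area-preserving;
\item it is exact, since $\Gamma^*\lambda_0-\lambda_0=d(-p^2/2)$;
\item it is twist with $b=1$.
\end{itemize}
Yet its generating function $-(x+x')^2/2$ is invariant under $(x,x')\mapsto(x+1,x'-1)$, not under $(x,x')\mapsto(x+1,x'+1)$.
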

\paragraph{\textbf{Data availability statement}}No data is involved in this article.
\paragraph{\textbf{Conflict of interest statement}}On behalf of all authors, the corresponding author states that there is no conflict of interest.

\bibliographystyle{plain}
\bibliography{mybib}

@inproceedings{aubry1978new,
  title={The new concept of transitions by breaking of analyticity in a crystallographic model},
  author={Aubry, Serge},
  booktitle={Solitons and Condensed Matter Physics: Proceedings of the Symposium on Nonlinear (Soliton) Structure and Dynamics in Condensed Matter, Oxford, England, June 27--29, 1978},
  pages={264--277},
  year={1978},
  organization={Springer}
}

@article {MR710121,
    AUTHOR = {Burkov, S. E. and Sinai, Ya. G.},
     TITLE = {Phase diagrams of one-dimensional lattice models with
              long-range antiferromagnetic interaction},
   JOURNAL = {Uspekhi Mat. Nauk},
  FJOURNAL = {Akademiya Nauk SSSR i Moskovskoe Matematicheskoe Obshchestvo.
              Uspekhi Matematicheskikh Nauk},
    VOLUME = {38},
      YEAR = {1983},
    NUMBER = {4(232)},
     PAGES = {205--225},
      ISSN = {0042-1316},
   MRCLASS = {82A68 (82A05 82A25)},
  MRNUMBER = {710121},
MRREVIEWER = {V.\ Z.\ Enol\cprime ski\u i},
}

@article {MR847308,
    AUTHOR = {Moser, J\"urgen},
     TITLE = {Minimal solutions of variational problems on a torus},
   JOURNAL = {Ann. Inst. H. Poincar\'e{} Anal. Non Lin\'eaire},
  FJOURNAL = {Annales de l'Institut Henri Poincar\'e. Analyse Non
              Lin\'eaire},
    VOLUME = {3},
      YEAR = {1986},
    NUMBER = {3},
     PAGES = {229--272},
      ISSN = {0294-1449},
   MRCLASS = {58E30 (58E15)},
  MRNUMBER = {847308},
MRREVIEWER = {Helmut\ Kaul},
       URL = {http://www.numdam.org/item?id=AIHPC_1986__3_3_229_0},
}

@article {MR991874,
    AUTHOR = {Bangert, Victor},
     TITLE = {On minimal laminations of the torus},
   JOURNAL = {Ann. Inst. H. Poincar\'e{} Anal. Non Lin\'eaire},
  FJOURNAL = {Annales de l'Institut Henri Poincar\'e. Analyse Non
              Lin\'eaire},
    VOLUME = {6},
      YEAR = {1989},
    NUMBER = {2},
     PAGES = {95--138},
      ISSN = {0294-1449},
   MRCLASS = {58E15 (49A21 58F18 58F27)},
  MRNUMBER = {991874},
MRREVIEWER = {Y.\ Mut\^o},
       URL = {http://www.numdam.org/item?id=AIHPC_1989__6_2_95_0},
}

@article {MR2197072,
    AUTHOR = {Auer, Franz and Bangert, Victor},
     TITLE = {Differentiability of the stable norm in codimension one},
   JOURNAL = {Amer. J. Math.},
  FJOURNAL = {American Journal of Mathematics},
    VOLUME = {128},
      YEAR = {2006},
    NUMBER = {1},
     PAGES = {215--238},
      ISSN = {0002-9327,1080-6377},
   MRCLASS = {49Q20 (53C20 53C65)},
  MRNUMBER = {2197072},
MRREVIEWER = {Giandomenico\ Orlandi},
       URL =
              {http://muse.jhu.edu/journals/american_journal_of_mathematics/v128/128.1auer.pdf},
}

@article{Bernard2008_1,
	author = {Bernard, Patrick},
	fjournal = {Journal of the American Mathematical Society},
	issn = {0894-0347},
	journal = {J. Amer. Math. Soc.},
	mrclass = {37J40 (37J50)},
	mrnumber = {2393423},
	mrreviewer = {Karl Friedrich Siburg},
	number = {3},
	pages = {615--669},
	title = {The dynamics of pseudographs in convex {H}amiltonian systems},
	url = {https://doi.org/10.1090/S0894-0347-08-00591-2},
	volume = {21},
	year = {2008}}

@book{Cannarsa_Sinestrari_book,
	author = {Cannarsa, Piermarco and Sinestrari, Carlo},
	isbn = {0-8176-4084-3},
	mrclass = {49-02 (35F20 49K20 49L20)},
	mrnumber = {2041617},
	mrreviewer = {Pierre Cardaliaguet},
	pages = {xiv+304},
	publisher = {Birkh{\"a}user Boston, Inc., Boston, MA},
	series = {Progress in Nonlinear Differential Equations and their Applications},
	title = {Semiconcave functions, {H}amilton-{J}acobi equations, and optimal control},
	volume = {58},
	year = {2004}}

@book{Evans_Gariepy_book,
	author = {Evans, Lawrence C. and Gariepy, Ronald F.},
	isbn = {0-8493-7157-0},
	mrclass = {28-02 (26-02 26Bxx 46E35)},
	mrnumber = {1158660},
	mrreviewer = {R. G. Bartle},
	pages = {viii+268},
	publisher = {CRC Press, Boca Raton, FL},
	series = {Studies in Advanced Mathematics},
	title = {Measure theory and fine properties of functions},
	year = {1992}}

@book{Hiriart-Urruty_Lemarechal_book2001,
	author = {Hiriart-Urruty, Jean-Baptiste and Lemar{\'e}chal, Claude},
	isbn = {3-540-42205-6},
	mrclass = {90-01 (26B25 49-01)},
	mrnumber = {1865628},
	pages = {x+259},
	publisher = {Springer-Verlag, Berlin},
	series = {Grundlehren Text Editions},
	title = {Fundamentals of convex analysis},
	url = {https://doi.org/10.1007/978-3-642-56468-0},
	year = {2001}}

@book{Katok_Hasselblatt_book,
	author = {Katok, Anatole and Hasselblatt, Boris},
	isbn = {0-521-34187-6},
	mrclass = {58Fxx (34Cxx 34Dxx 58-01 58F11 58F15)},
	mrnumber = {1326374},
	mrreviewer = {Edoh Amiran},
	note = {With a supplementary chapter by Katok and Leonardo Mendoza},
	pages = {xviii+802},
	publisher = {Cambridge University Press, Cambridge},
	series = {Encyclopedia of Mathematics and its Applications},
	title = {Introduction to the modern theory of dynamical systems},
	url = {https://doi.org/10.1017/CBO9780511809187},
	volume = {54},
	year = {1995}}

@article{Mather1991,
	author = {Mather, John N.},
	fjournal = {Mathematische Zeitschrift},
	issn = {0025-5874},
	journal = {Math. Z.},
	mrclass = {58F05 (58E30 58F11 58F22 70H35)},
	mrnumber = {1109661},
	number = {2},
	pages = {169--207},
	title = {Action minimizing invariant measures for positive definite {L}agrangian systems},
	url = {https://doi.org/10.1007/BF02571383},
	volume = {207},
	year = {1991}}

@article{Mather1993,
	author = {Mather, John N.},
	fjournal = {Universit{\'e} de Grenoble. Annales de l'Institut Fourier},
	issn = {0373-0956},
	journal = {Ann. Inst. Fourier (Grenoble)},
	mrclass = {58F05 (34C99 58E30 70H35)},
	mrnumber = {1275203},
	mrreviewer = {Gabriel P. Paternain},
	number = {5},
	pages = {1349--1386},
	title = {Variational construction of connecting orbits},
	url = {http://www.numdam.org/item?id=AIF_1993__43_5_1349_0},
	volume = {43},
	year = {1993}}

@InProceedings{10.1007/BFb0093512,
author="Aubry, Serge",
title="The devil's stair case transformation in incommensurate lattices",
booktitle="The Riemann Problem, Complete Integrability and Arithmetic Applications",
year="1982",
publisher="Springer Berlin Heidelberg",
pages="221--245",
isbn="978-3-540-39152-4"
}

@article {MR3622271,
    AUTHOR = {Klempnauer, Stefan and Schr\"oder, Jan Philipp},
     TITLE = {The stable norm on the 2-torus at irrational directions},
   JOURNAL = {Nonlinearity},
  FJOURNAL = {Nonlinearity},
    VOLUME = {30},
      YEAR = {2017},
    NUMBER = {3},
     PAGES = {912--942},
      ISSN = {0951-7715,1361-6544},
   MRCLASS = {37E45 (37D40 37J50)},
  MRNUMBER = {3622271},
MRREVIEWER = {M.\ Eugenia\ Rosado Mar\'ia},
       DOI = {10.1088/1361-6544/aa5520},
       URL = {https://doi.org/10.1088/1361-6544/aa5520},
}

@article {MR719634,
    AUTHOR = {Serge Aubry and Le Daeron, P. Y.},
     TITLE = {The discrete {F}renkel-{K}ontorova model and its extensions.
              {I}. {E}xact results for the ground-states},
   JOURNAL = {Phys. D},
  FJOURNAL = {Physica D. Nonlinear Phenomena},
    VOLUME = {8},
      YEAR = {1983},
    NUMBER = {3},
     PAGES = {381--422},
      ISSN = {0167-2789,1872-8022},
   MRCLASS = {58F05 (82A60)},
  MRNUMBER = {719634},
MRREVIEWER = {Dieter\ H.\ Mayer},
       DOI = {10.1016/0167-2789(83)90233-6},
       URL = {https://doi.org/10.1016/0167-2789(83)90233-6},
}

@article {MR719055,
    AUTHOR = {Aubry, Serge},
     TITLE = {The twist map, the extended {F}renkel-{K}ontorova model and the devil's staircase},
   JOURNAL = {Phys. D},
  FJOURNAL = {Physica D. Nonlinear Phenomena},
    VOLUME = {7},
      YEAR = {1983},
    NUMBER = {1-3},
     PAGES = {240--258},
      ISSN = {0167-2789,1872-8022},
   MRCLASS = {58F08 (58F05 82A55)},
  MRNUMBER = {719055},
MRREVIEWER = {Igor\ Gumowski},
       DOI = {10.1016/0167-2789(83)90129-X},
       URL = {https://doi.org/10.1016/0167-2789(83)90129-X},
}

@article{Aubry_1983exact,
doi = {10.1088/0022-3719/16/13/012},
url = {https://doi.org/10.1088/0022-3719/16/13/012},
year = {1983},
month = {may},
publisher = {},
volume = {16},
number = {13},
pages = {2497},
author = {Serge Aubry},
title = {Exact models with a complete Devil's staircase},
journal = {Journal of Physics C: Solid State Physics},
}

@article{aubry1983complete,
  title={Complete devil's staircase in the one-dimensional lattice gas},
  author={Serge Aubry},
  journal={Journal de Physique Lettres},
  volume={44},
  number={7},
  pages={247--250},
  year={1983},
  publisher={Les Editions de Physique}
}

@incollection {MR1219348,
    AUTHOR = {Serge Aubry},
     TITLE = {The concept of anti-integrability: definition, theorems and
              applications to the standard map},
 BOOKTITLE = {Twist mappings and their applications},
    SERIES = {IMA Vol. Math. Appl.},
    VOLUME = {44},
     PAGES = {7--54},
 PUBLISHER = {Springer, New York},
      YEAR = {1992},
      ISBN = {0-387-97858-5},
   MRCLASS = {58F30 (28D05 58F13)},
  MRNUMBER = {1219348},
MRREVIEWER = {M.\ L.\ Blank},
       DOI = {10.1007/978-1-4613-9257-6\_2},
       URL = {https://doi.org/10.1007/978-1-4613-9257-6_2},
}

@article{abe2024large,
  title={Large anomalous Hall effect in spin fluctuating devil’s staircase},
  author={Abe, Naoki and Hano, Yuya and Ishizuka, Hiroaki and Kozuka, Yusuke and Tadano, Terumasa and Tsujimoto, Yoshihiro and Yamaura, Kazunari and Ishiwata, Shintaro and Fujioka, Jun},
  journal={npj quantum materials},
  volume={9},
  number={1},
  pages={41},
  year={2024},
  publisher={Nature Publishing Group UK London}
}

@article {MR1384394,
    AUTHOR = {Bangert, Victor},
     TITLE = {Geodesic rays, {B}usemann functions and monotone twist maps},
   JOURNAL = {Calc. Var. Partial Differential Equations},
  FJOURNAL = {Calculus of Variations and Partial Differential Equations},
    VOLUME = {2},
      YEAR = {1994},
    NUMBER = {1},
     PAGES = {49--63},
      ISSN = {0944-2669,1432-0835},
   MRCLASS = {53C22 (58E10)},
  MRNUMBER = {1384394},
MRREVIEWER = {Edoh\ Amiran},
       DOI = {10.1007/BF01234315},
       URL = {https://doi.org/10.1007/BF01234315},
}

@article {MR668663,
    AUTHOR = {Bak, Per and Bruinsma, R.},
     TITLE = {One-dimensional {I}sing model and the complete devil's
              staircase},
   JOURNAL = {Phys. Rev. Lett.},
  FJOURNAL = {Physical Review Letters},
    VOLUME = {49},
      YEAR = {1982},
    NUMBER = {4},
     PAGES = {249--251},
      ISSN = {0031-9007},
   MRCLASS = {82A68},
  MRNUMBER = {668663},
       DOI = {10.1103/PhysRevLett.49.249},
       URL = {https://doi.org/10.1103/PhysRevLett.49.249},
}

@article {MR675056,
    AUTHOR = {Bak, Per},
     TITLE = {Commensurate phases, incommensurate phases and the devil's
              staircase},
   JOURNAL = {Rep. Progr. Phys.},
  FJOURNAL = {Reports on Progress in Physics},
    VOLUME = {45},
      YEAR = {1982},
    NUMBER = {6},
     PAGES = {587--629},
      ISSN = {0034-4885,1361-6633},
   MRCLASS = {82A25 (58E07 58F05 58F13 82-02)},
  MRNUMBER = {675056},
       URL = {http://stacks.iop.org/0034-4885/45/587},
}

@article {MR967638,
    AUTHOR = {Mather, John N.},
     TITLE = {Destruction of invariant circles},
   JOURNAL = {Ergodic Theory Dynam. Systems},
  FJOURNAL = {Ergodic Theory and Dynamical Systems},
    VOLUME = {8$^*$},
      YEAR = {1988},
     PAGES = {199--214},
      ISSN = {0143-3857,1469-4417},
   MRCLASS = {58F05 (34C27 58F08 58F27)},
  MRNUMBER = {967638},
MRREVIEWER = {Helmut\ R\"ussmann},
       DOI = {10.1017/S0143385700009421},
       URL = {https://doi.org/10.1017/S0143385700009421},
}

@incollection {MR945963,
    AUTHOR = {Bangert, Victor},
     TITLE = {Mather sets for twist maps and geodesics on tori},
 BOOKTITLE = {Dynamics reported, {V}ol.\ 1},
    SERIES = {Dynam. Report. Ser. Dynam. Systems Appl.},
    VOLUME = {1},
     PAGES = {1--56},
 PUBLISHER = {Wiley, Chichester},
      YEAR = {1988},
      ISBN = {3-519-02150-1},
   MRCLASS = {58F17 (58F05 82A68)},
  MRNUMBER = {945963},
MRREVIEWER = {Dietrich\ Flockerzi},
}

@incollection {MR1323222,
    AUTHOR = {Mather, John N. and Forni, Giovanni},
     TITLE = {Action minimizing orbits in {H}amiltonian systems},
 BOOKTITLE = {Transition to chaos in classical and quantum mechanics
              ({M}ontecatini {T}erme, 1991)},
    SERIES = {Lecture Notes in Math.},
    VOLUME = {1589},
     PAGES = {92--186},
 PUBLISHER = {Springer, Berlin},
      YEAR = {1994},
      ISBN = {3-540-58416-1},
   MRCLASS = {58F05 (58F27)},
  MRNUMBER = {1323222},
MRREVIEWER = {F.\ Cardin},
       DOI = {10.1007/BFb0074076},
       URL = {https://doi.org/10.1007/BFb0074076},
}

@book{mackay1985lectures,
  title={Lectures on orbits of minimal action for area-preserving maps},
  author={Mackay, Robert Sinclair and Stark, Jaroslav},
  year={1985},
  publisher={University of Warwick}
}

@article{rotondo2016devil,
  title={Devil’s staircase phase diagram of the fractional quantum Hall effect in the thin-torus limit},
  author={Rotondo, Pietro and Molinari, Luca Guido and Ratti, Piergiorgio and Gherardi, Marco},
  journal={Physical Review Letters},
  volume={116},
  number={25},
  pages={256803},
  year={2016},
  publisher={APS}
}

@article{ishimura1985devil,
  title={The Devil’s Staircase in an Ising Chain with Spin-Lattice Coupling},
  author={Ishimura, Norikazu},
  journal={Journal of the Physical Society of Japan},
  volume={54},
  number={3},
  pages={1131--1138},
  year={1985},
  publisher={The Physical Society of Japan}
}

@article{ruzzi2020hidden,
  title={Hidden Devil's staircase in a two-dimensional elastic model of spin crossover materials},
  author={Ruzzi, Gian and Cruddas, Jace and McKenzie, Ross H and Powell, Ben J},
  journal={arXiv preprint arXiv:2008.08738},
  year={2020}
}

@article {MR1374412,
    AUTHOR = {Graczyk, Jacek and \'Swi\c{a}tek, Grzegorz},
     TITLE = {Critical circle maps near bifurcation},
   JOURNAL = {Comm. Math. Phys.},
  FJOURNAL = {Communications in Mathematical Physics},
    VOLUME = {176},
      YEAR = {1996},
    NUMBER = {2},
     PAGES = {227--260},
      ISSN = {0010-3616,1432-0916},
   MRCLASS = {58F03 (58F14)},
  MRNUMBER = {1374412},
MRREVIEWER = {Andreas\ Stirnemann},
       URL = {http://projecteuclid.org/euclid.cmp/1104285997},
}

@article {MR863203,
    AUTHOR = {Moser, J\"urgen},
     TITLE = {Monotone twist mappings and the calculus of variations},
   JOURNAL = {Ergodic Theory Dynam. Systems},
  FJOURNAL = {Ergodic Theory and Dynamical Systems},
    VOLUME = {6},
      YEAR = {1986},
    NUMBER = {3},
     PAGES = {401--413},
      ISSN = {0143-3857,1469-4417},
   MRCLASS = {58F05 (49A10 49C05)},
  MRNUMBER = {863203},
MRREVIEWER = {Helmut\ R\"ussmann},
       DOI = {10.1017/S0143385700003588},
       URL = {https://doi.org/10.1017/S0143385700003588},
}

@article {MR2309170,
    AUTHOR = {Bernard, Patrick},
     TITLE = {Symplectic aspects of {M}ather theory},
   JOURNAL = {Duke Math. J.},
  FJOURNAL = {Duke Mathematical Journal},
    VOLUME = {136},
      YEAR = {2007},
    NUMBER = {3},
     PAGES = {401--420},
      ISSN = {0012-7094,1547-7398},
   MRCLASS = {37J50 (37J05)},
  MRNUMBER = {2309170},
MRREVIEWER = {Karl\ Friedrich\ Siburg},
       DOI = {10.1215/S0012-7094-07-13631-7},
       URL = {https://doi.org/10.1215/S0012-7094-07-13631-7},
}
\end{document}